\theoremstyle{plain}
\newtheorem{theorem}{Theorem}[section]
\newtheorem{corollary}[theorem]{Corollary}
\newtheorem{proposition}[theorem]{Proposition}
\newtheorem{lemma}[theorem]{Lemma}
\theoremstyle{definition}
\newtheorem{definition}[theorem]{Definition}
\newtheorem{example}[theorem]{Example}
\theoremstyle{remark}
\newtheorem{remark}[theorem]{Remark}
\numberwithin{equation}{section}\theoremstyle{plain}
\newcommand{\I}{\mathcal{I}}
\renewcommand{\1}{\textbf{1}}
\newcommand{\A}{{\mathcal A}}
\newcommand{\B}{{\mathcal B}}
\newcommand{\C}{{\mathcal C}}
\newcommand{\D}{{\mathcal D}}
\newcommand{\F}{{\mathcal F}}
\newcommand{\Z}{{\mathcal Z}}
\newcommand{\Zz}{{\mathbb Z}}
\newcommand{\Ii}{\mathfrak{I}}
\newcommand{\E}{{\mathcal E}}
\newcommand{\U}{{\mathcal U}}
\newcommand{\Rep}{\operatorname{Rep}}
\newcommand\Irr{\operatorname{Irr}}
\newcommand\FPdim{\operatorname{FPdim}}
\newcommand\vect{\operatorname{Vec}}
\newcommand\svect{\operatorname{sVec}}
\newcommand\id{\operatorname{id}}
\newcommand\Hom{\operatorname{Hom}}
\newcommand\rk{\operatorname{rk}}
\begin{document}
\title[generalized near-group fusion categories]{Structure, examples and classification for generalized near-group fusion categories}

\author{Jingcheng Dong}
\address{College of Mathematics and Statistics, Nanjing University of Information Science and Technology, Nanjing 210044, China}
\email{jcdong@nuist.edu.cn}

\author{Hua Sun}
\address{Department of Mathematics, Yangzhou University, Yangzhou, Jiangsu 225002, China}
\email{d160028@yzu.edu.cn}

\keywords{generalized near-group fusion category; generalized Tambara-Yamagami fusion category; group extension; Fibonacci category}

\subjclass[2010]{18D10}

\date{\today}

\begin{abstract}
We describe the structure of a generalized near-group fusion category and present an example of this class of fusion categories which arises from the extension of a Fibonacci category. We then classify slightly degenerate generalized near-group fusion categories. We also prove a structure result for braided generalized Tambara-Yamagami fusion categories. 
\end{abstract}

\maketitle

\section{Introduction}\label{sec1}
Let $\C$ be a fusion category, and let $G$ be the group generated by invertible simple objects of $\C$. Then there is an action of $G$ on the set of non-isomorphic non-invertible simple objects by left tensor product. If this action is transitive then $\C$ is called a generalized near-group fusion category in \cite{Thornton2012Generalized}.

\medbreak
Let $\C$ be a generalized near-group fusion category and let $X,Y$ be non-invertible simple objects in $\C$. Then $X\otimes X^*$ and $Y\otimes Y^*$ admit the same decompositions (see Section \ref{sec3}):
\begin{equation}
\begin{split}
X\otimes X^*=Y\otimes Y^*=\bigoplus_{h\in \Gamma}h\oplus k_1X_{1}\oplus \cdots\oplus k_nX_{n},
\end{split}\nonumber
\end{equation}
where $\{X_1,\cdots,X_n\}$ is a full list of non-isomorphic non-invertible simple objects of $\C$, $\Gamma$ is the stabilizer of $X$ under the action of $G$. In this case, we say that $\C$ is a generalized near-group fusion category of type $(G,\Gamma,k_1,\cdots,k_n)$.

\medbreak
In the thesis \cite{Thornton2012Generalized}, Thornton obtained some basic results and classified generalized near-group fusion categories when they are symmetric or modular. In this paper, we will continue to study generalized near-group fusion categories.

\medbreak
Let $\C$ be a generalized near-group fusion category of type $(G,\Gamma,k_1,\cdots,k_n)$.  Roughly speaking, we can divide generalized near-group fusion categories $\C$ into two classes according to whether $(k_1,\cdots,k_n)$ is $0$ or not. If $(k_1,\cdots,k_n)=0$ then $\C$ is a generalized Tambara-Yamagami fusion category. This class of fusion categories were introduced in \cite{liptrap2010generalized}, and then were further studied in \cite{natale2013faithful}. 

If $(k_1,\cdots,k_n)\neq0$ then the adjoint subcategory $\C_{ad}$ of $\C$ is not pointed, and hence it is also a generalized near-group fusion category, see Proposition \ref{subcategory}. Moreover, there is a 1-1 correspondence between the non-pointed fusion subcategories and the subgroups of the universal grading group, see Proposition \ref{categorytype}. Hence $\C_{ad}$ is the smallest non-pointed fusion subcategory of $\C$. Let $X\in \C_{ad}$ be a non-invertible simple object. Then $X\otimes X^*$ also admits the decomposition $\oplus_{h\in \Gamma}h\oplus k_1X_{1}\oplus \cdots\oplus k_nX_{n}$. So the adjoint subcategory $(\C_{ad})_{ad}$ of $\C_{ad}$ is also non-pointed, since $(k_1,\cdots,k_n)\neq0$. It follows that the universal grading group of $\C_{ad}$ is trivial, since $\C_{ad}$ is the smallest non-pointed fusion subcategory of $\C$. This shows that $\C$ is an extension of a smaller generalized near-group fusion category with trivial grading. Therefore, the problem of classifying generalized near-group fusion categories is reduced to classifying such fusion categories with trivial universal grading.

\medbreak
In general, an extension of  a generalized near-group fusion category with trivial grading is not necessary a generalized near-group fusion category, see Remark \ref{exten-GNG}. Hence it is interesting to decide whether an extension of a  generalized near-group fusion category is again a generalized near-group fusion category or not.  A Fibonacci category is a non-pointed fusion category of rank $2$. It can be viewed as the easiest generalized near-group fusion category with $(k_1,\cdots,k_n)\neq0$. Our result shows that any extension of a Fibonacci category is a generalized near-group fusion category, see Remark \ref{exten-Fib}. In addition, any extension $\C$ of a Fibonacci category $\F$  admits an exact factorization $\C=\F\bullet \C_{pt}$, where $\C_{pt}$ is the largest pointed fusion subcategory of $\C$. The notion of an exact factorization of a fusion category was introduced in \cite{gelaki2017exact}.

\medbreak
As we have mentioned, modular generalized near-group fusion categories have been classified by Natale and Thornton.  The next level of complexity is to classify slightly degenerate ones. Suppose that $\C$ is a slightly degenerate fusion category. Then $|G(\C)|=|\U(\C)|$ or $|G(\C)|=2|\U(\C)|$, where $G(\C)$ is the group generated by invertible simple objects of $\C$, $\U(\C)$ is the universal grading group of $\C$, see Proposition \ref{slig_degen}. Using this fact, we obtain that a slightly degenerate generalized near-group fusion category with $(k_1,\cdots,k_n)=0$ is in fact an extension of a pointed fusion category of rank $2$. Hence we can adopt the result from \cite{DNS2019}. If $\C$ is a slightly degenerate fusion category with $(k_1,\cdots,k_n)\neq0$, then the adjoint subcategory $\C_{ad}$ is either a Fibonacci category, or a slightly degenerate category of the form $\C(\mathfrak{psl}_2,q^t,8)$ with $q=e^{\frac{\pi i}{8}}$ and $(t,2)=1$, see \cite{bruillard2017classification} for the details on fusion categories $\C(\mathfrak{psl}_2,q^t,8)$. Then we can get the decomposition of $\C$ by the  M\"{u}ger decomposition theorem. Our main result is listed below (The fusion category $\Ii_{N, \zeta}$ is constructed in \cite{DNS2019} ).

\begin{theorem}
Let $\C$ be a slightly degenerate generalized near-group fusion category. Then $\C$ is exactly one of the following:

(1)\, $\C\cong \Ii_{N, \zeta} \boxtimes \B$, for some $N>2$, where  $\zeta \in k^\times$ is a primitive $2^N$th root of 1,  and $\B$ is a braided pointed fusion category.

(2)\, $\C\cong \C_{ad}\boxtimes \C_{pt}$,  where $\C_{ad}$ is a Fibonacci category.

(3)\, $\C\cong\C_{ad}\boxtimes \B$,  where $\C_{ad}$ is a slightly degenerate fusion category of the form $\C(\mathfrak{psl}_2,q^t,8)$ with $q=e^{\frac{\pi i}{8}}$ and $(t,8)=1$, $\B$ is a non-degenerate pointed fusion category.
\end{theorem}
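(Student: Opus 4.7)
The strategy is largely sketched in the paragraph preceding the statement. I split on whether $(k_1,\ldots,k_n)$ vanishes, and in each case reduce to an already-established classification.

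\textbf{Case 1: $(k_1,\ldots,k_n)=0$.} Here $\C$ is a generalized Tambara--Yamagami category. By Proposition \ref{slig_degen}, $|G(\C)|\in\{|\U(\C)|,\,2|\U(\C)|\}$, so the trivial component $\C_{ad}$ of the universal grading is pointed of rank at most $2$, i.e.\ $\vect$ or $\svect$. Thus $\C$ is a faithful $\U(\C)$-graded braided extension of such a category, and inserting this into the classification of braided extensions of $\vect$ and $\svect$ by a finite group from \cite{DNS2019} produces exactly the families $\Ii_{N,\zeta}\boxtimes\B$ of conclusion (1).

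\textbf{Case 2: $(k_1,\ldots,k_n)\neq 0$.} Now $\C_{ad}$ is non-pointed with trivial universal grading, so by the paragraph preceding the theorem and \cite{bruillard2017classification} it is either (i) a Fibonacci category, or (ii) a category $\C(\mathfrak{psl}_2,q^t,8)$. In sub-case (i), $\C_{ad}$ is non-degenerate, and M\"uger's decomposition theorem yields $\C\cong\C_{ad}\boxtimes\C_{ad}'$ with $\C_{ad}'$ the centralizer of $\C_{ad}$ in $\C$. Any simple non-invertible $X\in\C_{ad}'$ satisfies $X\otimes X^*\in\C_{ad}\cap\C_{ad}'=Z_2(\C_{ad})=\vect$ by universal grading and closure under duals, so $X$ is invertible after all. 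Hence $\C_{ad}'=\C_{pt}$, which is (2).

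In sub-case (ii), both $\C$ and $\C_{ad}$ have M\"uger center $\svect\subseteq\C_{ad}$. The DGNO centralizer formula gives $\FPdim(\C_{ad})\,\FPdim(\C_{ad}')=2\,\FPdim(\C)$, whence $\C=\C_{ad}\boxtimes_{\svect}\C_{ad}'$. Pointedness of $\C_{ad}'$ follows as in (i), using that for a simple non-invertible $X\in\C_{ad}'$ one would get $X\otimes X^*\in\svect$, which has no non-invertible simples, whereas the near-group decomposition forces a non-invertible summand when $(k_1,\ldots,k_n)\neq 0$; a centralizer chase then yields $Z_2(\C_{ad}')=\svect$. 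It remains to show the generator $g$ of $\svect$ is a direct summand of $G(\C_{ad}')$. If $g=2h$, the quadratic form $q$ on $G(\C_{ad}')$ satisfies $q(h)^4=q(g)=-1$, so
\[
b(g,h)=b(h,h)^2=\bigl(q(g)/q(h)^2\bigr)^2=q(g)^2/q(h)^4=q(g)=-1\neq 1,
\]
contradicting $g$ lying in the radical of $b$. Hence $\C_{ad}'\cong\svect\boxtimes\B$ with $\B$ non-degenerate pointed, and substituting into the relative decomposition gives $\C\cong\C_{ad}\boxtimes\B$, which is (3).

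The main obstacle is sub-case (ii): upgrading the relative decomposition $\C_{ad}\boxtimes_{\svect}\C_{ad}'$ to an honest Deligne product requires both the pointedness of $\C_{ad}'$ (rescued by the $(k_1,\ldots,k_n)\neq 0$ hypothesis) and the splitting of $\svect$ off $G(\C_{ad}')$ as a direct summand of the associated metric group. Case (1) and sub-case (i) reduce cleanly to existing classifications.
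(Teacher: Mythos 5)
Your overall architecture (split on $(k_1,\ldots,k_n)=0$ versus $\neq 0$, reduce the first case to \cite{DNS2019} and the second to M\"uger decomposition plus \cite{bruillard2017classification}) matches the paper's, but both cases contain a genuine gap where you assert precisely the point the paper has to work for. In Case 1, Proposition \ref{slig_degen} does \emph{not} give you that $\C_{ad}$ is pointed of rank at most $2$. With $(k_1,\ldots,k_n)=0$ one has $|\U(\C)|=2[G:\Gamma]$ by \cite[Proposition 5.2(ii)]{natale2013faithful}, so the alternative $|G(\C)|=2|\U(\C)|$ forces $|\Gamma|=4$, i.e.\ $\C_{ad}$ pointed of rank $4$ -- your ``i.e.\ $\vect$ or $\svect$'' silently discards this case, and discarding it is exactly the nontrivial step: the paper observes that then $\svect\subseteq\C_{ad}$ by Proposition \ref{slig_degen}(2), so the fermion $\delta$ lies in $\Gamma$ and appears in $X\otimes X^*$ for every non-invertible simple $X$, whence $\delta\otimes X\cong X$, contradicting slight degeneracy via \cite[Proposition 2.6(i)]{etingof2011weakly}. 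Without that argument your Case 1 is incomplete (and you should also say why $N>2$ in conclusion (1); it follows from slight degeneracy by \cite[Lemma 4.14]{DNS2019}).

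In Case 2 the dichotomy ``$\C_{ad}$ is Fibonacci or $\C(\mathfrak{psl}_2,q^t,8)$'' is cited from the paragraph preceding the theorem -- but that paragraph is the paper's own announcement of the result, so the citation is circular. Lemma \ref{bruill2017} only classifies \emph{rank-$4$ non-split super-modular} categories, so before invoking it you must establish the rank and the M\"uger center of $\C_{ad}$, which is the bulk of the paper's Theorem \ref{slightly-deg2}: if $\FPdim(\C_{pt})=|\U(\C)|$, each graded component has exactly one invertible and (by Propositions \ref{normalsubgroup} and \ref{categorytype}) at most one non-invertible simple, so $\C_{ad}$ has rank $2$ and is Fibonacci (Lemma \ref{dimuc}); if $\FPdim(\C_{pt})=2|\U(\C)|$, each component has two invertibles and one or two non-invertibles, rank $3$ is excluded by the even-rank Lemma \ref{lemma100} (using $\svect\subseteq\Z_2(\C_{ad})$ from Proposition \ref{slig_degen}(2)), and in the rank-$4$ case one must still rule out $\C_{ad}$ being \emph{symmetric}: if $Y_i\in\Z_2(\C_{ad})$ then $\C_{ad}$ would be symmetric, non-Tannakian of dimension $>2$, hence a $\Zz_2$-extension of a Tannakian subcategory by Theorem \ref{SymmCat}, contradicting the triviality of $\U(\C_{ad})$ (Theorem \ref{structure}). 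Only after all this is $\Z_2(\C_{ad})=\svect$ and Lemma \ref{bruill2017} applicable. Two smaller points: your inference ``$\FPdim(\C_{ad})\FPdim(\C_{ad}')=2\FPdim(\C)$, whence $\C=\C_{ad}\boxtimes_{\svect}\C_{ad}'$'' invokes a relative Deligne product that the dimension identity alone does not deliver and that the paper never needs -- it instead splits $\C_{pt}\cong\svect\boxtimes\B$ (your metric-group computation showing the fermion generates a direct summand is correct, and is essentially \cite[Proposition 2.6(ii)]{etingof2011weakly}) and then applies Theorem \ref{MugerThm} to the non-degenerate $\B$, identifying $\B'=\C_{ad}$ by a rank/dimension count. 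Your sub-case (i) argument that $\C_{ad}'$ is pointed, and the analogous argument in (ii) using $(k_1,\ldots,k_n)\neq 0$, are both fine.
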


\medbreak
In \cite{natale2013faithful}, non-degenerate generalized Tambara-Yamagami fusion categories have been classified by Natale. In this paper, we continue to study braided generalized Tambara-Yamagami fusion categories. A first observation shows that the classification of braided generalized Tambara-Yamagami fusion categories $\C$ is reduced to the case when $\FPdim(\C)$ is a power of $2$.  It turns out we can get second main result below.

\begin{theorem}\label{BGTY-001}
Let $\C$ be a braided generalized Tambara-Yamagami fusion category of dimension $2^n$. Then

(1)\, Suppose that $\C$ is integral and  $cd(\C)=\{1,2^i\}$. Then $\C$ is a $G$-equivariantization of a pointed fusion category, where $G$ is an Abelian group of order $2^{2i-1}$. In particular, $\C$ is group-theoretical.

(2)\, Suppose that $\C$ is not integral and  $cd(\C)=\{1,2^i\sqrt{2}\}$. Then $\C$ is a $G$-equivariantization of some $\Ii_{N, \zeta} \boxtimes \B$, where $G$ is an Abelian group of order $2^{2i-1}$,  and $\B$ is a braided pointed fusion category.
\end{theorem}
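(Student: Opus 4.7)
The plan is to reduce to the cases covered by Theorem 1.1 by de-equivariantizing $\C$ along a Tannakian subcategory whose order will turn out to be exactly $2^{2i-1}$. First I would record dimensions: the fusion rule $X \otimes X^* = \bigoplus_{h \in \Gamma} h$ gives $\FPdim(X)^2 = |\Gamma|$ for every non-invertible simple $X$, and transitivity of the $G(\C)$-action on non-invertible simples yields $\FPdim(\C) = 2|G(\C)|$. Hence $|G(\C)| = 2^{n-1}$, with $|\Gamma| = 2^{2i}$ in part (1) and $|\Gamma| = 2^{2i+1}$ in part (2).

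Since $\C$ is braided, its M\"uger center $\C'$ is symmetric, hence Tannakian or super-Tannakian by Deligne's theorem. In part (1), $\C$ is integral braided of $2$-power dimension, and the structural theory of such categories produces an equivariantization presentation $\C \cong \D^G$ with $\D$ a pointed braided fusion category. In part (2), non-integrality of $\C$ forces $\svect \subseteq \C'$; I would de-equivariantize by the maximal Tannakian subcategory $\Rep(G) \subseteq \C'$ to obtain a slightly degenerate braided fusion category $\D = \C_G$, so that $\C \cong \D^G$. Since $\D$ remains non-integral, case (1) of Theorem 1.1 applies to give $\D \cong \Ii_{N, \zeta} \boxtimes \B$.

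In both parts it remains to pin down the order $|G| = 2^{2i-1}$. This is done by comparing dimensions and orbit sizes under $\C = \D^G$: the dimension formula $\FPdim_\C(X, \rho) = \FPdim_\D(X) \cdot [G : G_X] \cdot \dim\rho$, combined with $cd(\C) \in \{\{1, 2^i\}, \{1, 2^i\sqrt 2\}\}$ and $\FPdim(\C) = 2^n$, forces $|G|$ to take the stated value. Specifically, non-invertible simples of $\C$ arise from $G$-orbits of size $2^i$ on simples of $\D$, while invertibles come from $G$-fixed simples combined with the $|G|$ linear characters of the abelian group $G$; the arithmetic of these orbit counts, together with $|G(\C)| = 2^{n-1}$, pins down $|G|$.

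The main obstacle is the careful identification of the Tannakian subcategory along which one de-equivariantizes, together with verifying in part (2) that the resulting $\D$ is again a slightly degenerate generalized near-group category so that Theorem 1.1 can be applied. Pinning down $|G| = 2^{2i-1}$, while combinatorial in nature, requires tracking the interaction between the $G$-action on $\Irr(\D)$ and the $G(\C)$-orbit structure on non-invertible simples of $\C$, and is precisely where the integral/non-integral dichotomy enters explicitly.
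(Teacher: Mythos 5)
Your proposal has the right overall shape (find a Tannakian subcategory, de-equivariantize, invoke Theorem \ref{DNSmain}), but it is missing the two ideas that actually carry the paper's proof, and one of its key claims is backwards. First, you never construct a Tannakian subcategory of dimension exactly $2^{2i-1}$; instead you assert that ``orbit arithmetic'' forces $|G|=2^{2i-1}$ in whatever presentation $\C\cong\D^G$ the ``structural theory'' provides. That cannot work: equivariantization presentations are far from unique, so no dimension count on $\C$ alone pins down $|G|$. Concretely, $\Rep(D_8)$ is a symmetric (hence braided) integral generalized Tambara--Yamagami category with $cd=\{1,2\}$, i.e.\ $i=1$, and it is simultaneously $\vect^{D_8}$ (an equivariantization of a pointed category by a group of order $8$) and, as the theorem asserts, an equivariantization of $\Rep(\Zz_4)$ by a group of order $2=2^{2i-1}$. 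What the theorem requires is the \emph{existence} of a presentation with the stated group, and the paper gets it by locating the Tannakian subcategory in a place whose dimension is known in advance: $\C_{ad}$ is the pointed subcategory generated by $\Gamma$, of dimension $|\Gamma|=2^{2i}$ (resp.\ one uses $\C_{ad}\cap\Z_2(\C)$, shown to have dimension at least $2^{2i}$ via \cite[Theorem 3.14]{drinfeld2010braided} and $(\C_{ad})'\supseteq\C_{pt}$ in the non-integral case); it is symmetric by Lemma \ref{centralizer_C_ad}, so Theorem \ref{SymmCat} yields $\E=\Rep(G)$ with $\FPdim(\E)=2^{2i-1}$. Second, you never prove that the de-equivariantization is pointed (part (1)) or has $cd=\{1,\sqrt2\}$ (part (2)). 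The paper's mechanism is the kernel computation: since $\E$ is the kernel of the forgetful functor $F:\C\to\C_G$, applying $F$ to $X\otimes X^*=\bigoplus_{h\in\Gamma}h$ shows $F(X)\otimes F(X)^*$ contains $\FPdim(\E)=2^{2i-1}$ copies of $\1$, so by Lemma \ref{multiplicity} $F(X)$ has at least $2^i$ simple summands; comparing with $\FPdim(F(X))=2^i$ (resp.\ $2^i\sqrt2$) forces every summand to be invertible (resp.\ of dimension $\sqrt2$), and \cite[Lemma 4.6(iii)]{drinfeld2010braided} transfers this to all of $\Irr(\C_G)$. Without this step your argument establishes nothing about $\C_G$ beyond its dimension.

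In part (2) there are further problems. The claim that non-integrality forces $\svect\subseteq\Z_2(\C)$ is unjustified, and it fails at the boundary of your own setup: Ising categories are non-integral braided generalized Tambara--Yamagami categories with \emph{trivial} M\"uger center. Even granting $i\geq 1$, you would need an argument, and your route then requires two more unproven facts: that de-equivariantizing by a maximal Tannakian subcategory of $\Z_2(\C)$ yields a \emph{slightly degenerate} category (if $\Z_2(\C)$ were Tannakian the quotient is non-degenerate), and that $\C_G$ is again a generalized near-group category so that Theorem 1.1 applies --- you flag this last point as ``the main obstacle'' but leave it untouched, and it is not routine since de-equivariantization need not preserve transitivity of the $G(\C_G)$-action. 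The paper avoids all of this: having shown $cd(\C_G)=\{1,\sqrt2\}$ directly, it concludes $\C_G$ is a braided extension of a rank-$2$ pointed category (braided because $\E\subseteq\C_{ad}\cap\Z_2(\C)\subseteq\Z_2(\C)$, by \cite[Remark 2.3]{etingof2011weakly}) and then quotes Theorem \ref{DNSmain}, which needs no degeneracy and no near-group hypothesis. So while your skeleton points in the right direction, the proposal as written omits the exact-dimension Tannakian construction, the multiplicity argument identifying $\C_G$, and rests part (2) on a false intermediate claim.
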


\medbreak
The paper is organized as follows. In Section \ref{sec2},  we discuss some basic notions and results on fusion categories that will be used throughout the paper.  In Section \ref{sec3}, we describe the structure of a generalized near-group fusion category. In Section \ref{sec4}, we present an example of a generalized near-group fusion category which arises from the extension of a Fibonacci category. In Section \ref{sec5}, we classify slightly degenerate generalized near-group fusion categories. In Section \ref{sec6}, we study the structure theorem for braided generalized Tambara-Yamagami fusion categories. Throughout this paper, we will work over an algebraically closed field $k$ of characteristic $0$.

\section{Preliminaries}\label{sec2}
A fusion category $\C$ is a $k$-linear semisimple rigid tensor category with finitely many isomorphism classes of simple objects, finite-dimensional vector space of morphisms and the unit object $\1$ is simple.

\subsection{Invertible simple objects}\label{sec2.1}
Let $\C$ be a fusion category and let $K(\C)$ be the Grothendieck ring of $\C$. Then the set $\Irr(\C)$ of isomorphism classes of simple objects in $\C$ is a basis of $K(\C)$. The Frobenius-Perron dimension of $X\in \Irr(\C)$ is the Frobenius-Perron eigenvalue of the matrix of left multiplication by $X$ in the basis $\Irr(\C)$.

The Frobenius-Perron dimension of $\C$ is defined by
$$\FPdim(\C)=\sum_{X\in\Irr(\C)}\FPdim(X)^2.$$

A simple object $X\in \C$ is called invertible if $X\otimes X^*\cong \1$, where $X^*$ is the dual of $X$. This implies that $X$ is invertible if and only if $\FPdim(X)=1$. A fusion category $\C$ is called pointed if every element in $\Irr(\C)$ is invertible. Let $\C_{pt}$ be the fusion subcategory generated by all invertible simple objects in $\C$. Then $\C_{pt}$ is the largest pointed fusion subcategory of $\C$.

\begin{lemma}\label{action_of_G(C)}
Let $\C$ be a fusion category, and let $g$ be an invertible simple object of $\C$. Then $g\otimes X$ is again a simple object for every simple object $X$ of $\C$.
\end{lemma}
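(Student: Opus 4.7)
The plan is to exploit the fact that tensoring with an invertible object is an auto-equivalence of the category $\C$. Since $g$ is invertible, $g \otimes g^* \cong \1 \cong g^* \otimes g$, so the functor $g \otimes (-) : \C \to \C$ has two-sided inverse $g^* \otimes (-)$. In particular, it is a $k$-linear equivalence of the semisimple abelian category $\C$, and any such equivalence sends simple objects to simple objects.

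If one prefers a direct verification, I would use rigidity (adjunction between $g \otimes (-)$ and $g^* \otimes (-)$) to compute
\begin{equation*}
\End(g\otimes X) \;\cong\; \Hom(X,\, g^*\otimes g\otimes X) \;\cong\; \Hom(X,X) \;\cong\; k,
\end{equation*}
the last step using simplicity of $X$ and the fact that $\1$ is simple. In a semisimple category, a nonzero object whose endomorphism algebra is one-dimensional must be simple; and $g\otimes X$ is nonzero because $\FPdim(g\otimes X)=\FPdim(g)\FPdim(X)=\FPdim(X)>0$. Hence $g\otimes X$ is simple.

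There is no real obstacle here: the statement is a standard basic fact about fusion categories, and the only ingredients needed are rigidity (for the adjunction), semisimplicity, and the defining property $g\otimes g^*\cong \1$ of an invertible object. The whole argument fits in a few lines.
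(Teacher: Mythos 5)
Your proposal is correct, and it actually contains two proofs. The first one (tensoring with $g$ is an autoequivalence of $\C$ with inverse $g^*\otimes(-)$, and equivalences preserve simples) is essentially the paper's own argument in functorial dress: the paper proceeds concretely by contradiction, supposing $g\otimes X\cong U\oplus V$ and tensoring with $g^*$ to obtain $X\cong(g^*\otimes U)\oplus(g^*\otimes V)$, contradicting simplicity of $X$; both versions hinge on the cancellation $g^*\otimes g\otimes X\cong X$ and on $g\otimes(-)$ preserving direct sums. Your second argument is a genuinely different route: computing $\End(g\otimes X)\cong\Hom(X,\,g^*\otimes g\otimes X)\cong\Hom(X,X)\cong k$ via the duality adjunction, then invoking the semisimple characterization that a nonzero object with one-dimensional endomorphism algebra is simple. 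This route makes explicit two points the paper leaves tacit, namely that ``not simple implies decomposable'' requires semisimplicity, and that $g\otimes X\neq 0$ (your Frobenius--Perron dimension argument works, though the isomorphism $X\cong g^*\otimes g\otimes X$ already forces it); it is also the same adjunction computation the paper uses immediately after the lemma, in $1=\dim\Hom(g\otimes X,X)=\dim\Hom(g,X\otimes X^*)$, to show each $g\in G[X]$ occurs with multiplicity one in $X\otimes X^*$. Either of your arguments is complete as written.
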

\begin{proof}
If $g\otimes X$ admits a decomposition $U\oplus V$ then
$$X\cong g^*\otimes g\otimes X\cong g^*\otimes (U\oplus V)\cong(g^*\otimes U)\oplus (g^*\otimes V).$$
This contradicts the fact that $X$ is simple.
\end{proof}

Let $\C$ be a pointed fusion category and let $X,Y\in\Irr(\C)$. Then $X\otimes Y$ also lies in $\Irr(\C)$ by Lemma \ref{action_of_G(C)}.
This property endows $G:=\Irr(\C)$ the structure of a finite group with multiplication given by tensor product. The inverse of $X\in G$ is its dual $X^*$. The pointed fusion category $\C$ is classified by the group $G$ and a cohomology class $\omega \in H^3(G,k^*)$, see \cite{DiPaR1991}. We denote such a fusion category by $\vect_{G}^{\omega}$.

Let $\C$ be a fusion category, and let $G(\C)$ be the group generated by $\Irr(\C_{pt})$. Then there is an action of $G(\C)$ on the set $\Irr(\C)$ by left tensor product by Lemma \ref{action_of_G(C)}. Let $G[X]$ be the stabilizer of any $X\in \Irr(\C)$ under this action. Then
$$1=\dim\Hom(g\otimes X,X)=\dim\Hom(g,X\otimes X^*)$$
shows that $g$ appears in the decomposition of $X\otimes X^*$ with multiplicity $1$. Hence for any simple object $X$, we have a decomposition
\begin{equation}\label{decom1}
\begin{split}
X\otimes X^*=\bigoplus_{g\in G[X]}g\oplus\sum_{Y\in \Irr(\C)-G[X]}  \dim\Hom(Y,X\otimes X^*)Y.
\end{split}
\end{equation}

\medbreak
For any $g\in G(\C)$, $X\in\Irr(\C)$, $(g\otimes X)\otimes (g\otimes X)^*\cong g\otimes (X\otimes X^*)\otimes g^{-1}$. This fact implies the following lemma.
\begin{lemma}\label{conjugate}
For any $g\in G(\C)$, $X\in\Irr(\C)$, we have $G[g\otimes X]=gG[X]g^{-1}$.
\end{lemma}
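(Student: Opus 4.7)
The plan is to deduce the equality directly from the definition of $G[-]$, using associativity and the fact that an invertible simple object has a tensor inverse. By definition $h \in G[g\otimes X]$ iff $h\otimes(g\otimes X) \cong g\otimes X$ in $\C$. Tensoring on the left by $g^{-1}=g^{*}$ and using associativity, this is equivalent to $(g^{-1}\otimes h\otimes g)\otimes X \cong X$, i.e.\ $g^{-1}hg \in G[X]$, i.e.\ $h \in g\,G[X]\,g^{-1}$. This gives the claimed equality of subsets of $G(\C)$.

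Alternatively, I would exploit the observation flagged immediately before the lemma, namely $(g\otimes X)\otimes(g\otimes X)^{*} \cong g\otimes(X\otimes X^{*})\otimes g^{-1}$, and read off invertible parts: applying the decomposition \eqref{decom1} to the left-hand side shows that the invertible summands are exactly the elements of $G[g\otimes X]$ (each with multiplicity one), while applying \eqref{decom1} to $X\otimes X^{*}$ inside the right-hand side shows that the invertible summands are exactly $\{\,g\otimes h\otimes g^{-1}:h\in G[X]\,\}$. Conjugation by $g$ permutes $G(\C)$ (it is an autoequivalence, so it sends invertibles to invertibles and preserves simplicity by Lemma \ref{action_of_G(C)} applied twice), so this set equals $g\,G[X]\,g^{-1}$, again as an equality with matching multiplicities.

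I do not expect any real obstacle here: everything reduces to associativity and to the fact that left (or two-sided) tensoring by an invertible object is an autoequivalence of $\C$, which is Lemma \ref{action_of_G(C)}. The only point to keep in mind is that one direction of the equivalence $h\otimes g\otimes X\cong g\otimes X \Leftrightarrow (g^{-1}hg)\otimes X\cong X$ uses the existence of the inverse $g^{-1}$ (so the argument genuinely needs $g \in G(\C)$), and that the two sets obtained really are compared as subgroups (automatic once the underlying sets agree).
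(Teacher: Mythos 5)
Your proposal is correct. Your second, ``alternative'' argument is in fact the paper's entire proof: the authors simply record the isomorphism $(g\otimes X)\otimes (g\otimes X)^*\cong g\otimes (X\otimes X^*)\otimes g^{-1}$ immediately before the lemma and state that it implies the claim, the point being that by \eqref{decom1} the invertible summands of the left-hand side are exactly $G[g\otimes X]$ while those of the right-hand side are $gG[X]g^{-1}$. Your primary argument is genuinely different and more elementary: it never invokes duals or the decomposition \eqref{decom1}, but just unwinds the definition of the stabilizer, using that left tensoring by $g$ is invertible on isomorphism classes of objects ($h\otimes g\otimes X\cong g\otimes X$ iff $(g^{-1}hg)\otimes X\cong X$). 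This makes visible that the lemma is pure group-action formalism --- the stabilizer of a translate is the conjugate of the stabilizer --- and would work verbatim for the action of any group of invertibles on any set of simples, with no appeal to rigidity. What the paper's route buys in exchange is compatibility with the bookkeeping it has just set up: it re-derives the conjugation identity at the level of the canonical decomposition of $X\otimes X^*$, which is the form actually used later (e.g.\ in Lemma \ref{fusionrules3} and Proposition \ref{normalsubgroup}). Your closing caveats are apt and correctly handled: the backward implication does need $g$ invertible, and once the underlying subsets of $G(\C)$ agree the equality of subgroups is automatic.
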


\subsection{Group grading of a fusion category}\label{sec2.2}
Let $G$ be a finite group. A $G$-graded fusion category is a fusion category $\C$ admitting a direct sum of full abelian subcategories $\C=\oplus_{g\in G}\C_g$ such that $(\C_g)^*=\C_{g-1}$ and $\C_g\otimes\C_h\subseteq\C_{gh}$ for all $g,h\in G$. This grading is called faithful if $\C_g\neq 0$ for all $g\in G$. We say $\C$ is a $G$-extension of $\D$ if $\C=\oplus_{g\in G}\C_g$ admits a faithful grading such that $\D$ is equivalent the trivial component $\C_e$.

\medbreak
By \cite[Proposition 8.20]{etingof2005fusion}, if $\C$ is a $G$-extension of $\D$ then
\begin{equation}\label{FPdimgrading}
\begin{split}
\FPdim(\C_g)=\FPdim(\C_h),\,\, \FPdim(\C)=|G| \FPdim(\D), \forall g,h\in G.
\end{split}
\end{equation}

Let $\C$ be a fusion category. The adjoint subcategory $\C_{ad}$ of $\C$ is the full subcategory generated by simple objects in $X\otimes X^*$ for all $X\in \Irr(\C)$. It is well known that $\C$ has a canonical faithful grading $\C=\oplus_{g\in \mathcal{U}(\C)}\C_g$ with trivial component $\C_e=\C_{ad}$. This grading is called the universal grading of $\C$, and $\mathcal{U}(\C)$ is called the universal grading group of $\C$. Let $\C=\oplus_{g\in G}\C_g$ be any grading of $\C$. Then the trivial component $\C_e$ contains $\C_{ad}$ by \cite[Corollary 3.7]{gelaki2008nilpotent}.

\subsection{Braided fusion categories}\label{sec2.3}
A braided fusion category $\C$ is a fusion category admitting a braiding $c$, where the braiding is a family of natural isomorphisms: $c_{X,Y}$:$X\otimes Y\rightarrow Y\otimes X$ satisfying the hexagon axioms for all $X,Y\in\C$.

\medbreak
Let $\C$ be a braided fusion category and $\D\subseteq$ be a fusion subcategory of $\C$ . The centralizer $\D'$ of $\D$ is the full subcategory generated by $\{X\in \C| c_{Y,X}c_{X,Y}=\id_{X\otimes Y},\forall\ Y\in \D\}$. The M\"{u}ger center $\Z_2(\C)$ of $\C$ is the centralizer $\C'$ of $\C$. The fusion category $\C$ is called non-degenerate if $\Z_2(\C)$ is equivalent to the trivial category $\vect$. The fusion category $\C$ is called slightly degenerate if $\Z_2(\C)$ is equivalent to the category $\svect$ of super-vector spaces.

\medbreak
A braided fusion category $\C$ is called symmetric if $\Z_2(\C)=\C$. A symmetric fusion category $\C$ is called Tannakian if it is equivalent to the category $\Rep(G)$ of finite-dimensional representations of a finite group $G$, as braided fusion categories. The following result is due to Drinfeld et al.

\begin{theorem}{\cite[Corollary 2.50]{drinfeld2010braided}}\label{SymmCat}
Let $\C$ be a symmetric fusion category. Then one of following holds:

(1)\,  $\C$ is Tannakian;

(2)\,  $\C$ is an $\mathbb{Z}_2$-extension of a Tannakian subcategory.
\end{theorem}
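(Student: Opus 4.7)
The plan is to deduce the statement from Deligne's classification of symmetric fusion categories in characteristic zero. That theorem asserts that any symmetric fusion category $\C$ is equivalent, as a symmetric tensor category, to the representation category $\Rep(G,z)$ of a finite super-group: a pair $(G,z)$ where $G$ is a finite group and $z \in G$ is a central element with $z^2 = 1$, and where the symmetric braiding on $\Rep(G)$ is modified by the sign rule so that $z$ acts as $-1$ on the odd sector. First I would invoke this structural result to reduce the problem to understanding $\Rep(G,z)$.

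Next I would split into two cases according to $z$. If $z = 1$, the modification of the braiding is trivial and $\C \simeq \Rep(G)$ in the standard symmetric structure, which places $\C$ in case (1). Assume henceforth that $z \neq 1$. By Schur's lemma, for every simple $V \in \Irr(\C)$ the central involution $z$ acts by a scalar $\epsilon(V) \in \{\pm 1\}$, and since $z$ is group-like the assignment $\epsilon$ is multiplicative on tensor products. Setting $\C_i := \langle V \in \Irr(\C) : \epsilon(V) = (-1)^i \rangle$ for $i \in \Zz_2$ then produces a $\Zz_2$-grading $\C = \C_0 \oplus \C_1$ compatible with tensor product and duality. Because irreducible representations of a finite group separate points, $z \neq 1$ forces some simple object on which $z$ acts nontrivially, so $\C_1 \neq 0$ and the grading is faithful.

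Finally, I would identify the trivial component. By construction $\C_0$ consists of the representations of $G$ on which $z$ acts trivially, so $\C_0 \simeq \Rep(G/\langle z \rangle)$ as a fusion category; furthermore, the sign modification of the braiding is supported on the odd sector and therefore disappears on $\C_0$, which inherits the standard symmetric braiding on $\Rep(G/\langle z \rangle)$. Thus $\C_0$ is Tannakian and $\C$ is a $\Zz_2$-extension of it, giving case (2).

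The genuine difficulty is Deligne's theorem itself, whose proof requires Tannakian reconstruction for symmetric tensor categories satisfying the subexponential growth condition, and this is the deep input I would assume as a black box. Everything after that reduction is a routine group-representation argument, with the only technical care needed being the compatibility of the $\Zz_2$-grading with the sign-modified braiding, which follows from the fact that the modification only alters the commutativity constraint by the bicharacter $(V,W) \mapsto \epsilon(V)\epsilon(W)$ that is diagonal in the grading.
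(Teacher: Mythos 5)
Your proposal is correct and takes the same route as the source the paper relies on for this statement: the paper gives no proof of its own, quoting \cite[Corollary 2.50]{drinfeld2010braided}, whose argument is precisely this reduction to Deligne's classification $\C\simeq\Rep(G,z)$, followed by the $\Zz_2$-grading by the $z$-eigenvalue with faithful grading when $z\neq 1$ and Tannakian trivial component $\Rep(G/\langle z\rangle)$. One cosmetic slip in your final remark: the sign rule alters the commutativity constraint by the bicharacter $(-1)^{|V|\,|W|}$, not by $\epsilon(V)\epsilon(W)=(-1)^{|V|+|W|}$; this does not affect the argument, since the correct bicharacter is likewise trivial on the even component.
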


Let $\B$ be a fusion subcategory of a braided fusion category $\C$. The commutator of $\B$ is the fusion subcategory $\B^{co}$ generated by all simple objects $X\in \C$ such that $X\otimes X^*\in \B$.

%

\begin{lemma}\label{centralizer_C_ad}
Let $\C$ be a braided fusion category. Then the largest pointed fusion subcategory $(\C_{ad})_{pt}$ of $\C_{ad}$ is a symmetric category.
\end{lemma}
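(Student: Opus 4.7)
The plan is to show that in any braided fusion category, the pointed subcategory $\C_{pt}$ and the adjoint subcategory $\C_{ad}$ centralize each other; once this is established, $(\C_{ad})_{pt}$ sits inside $\C_{pt}\cap \C_{ad}\subseteq (\C_{ad})'$, so it centralizes itself and is therefore symmetric.

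The first step is to verify that for every invertible simple $g\in \C_{pt}$ and every simple $X\in\Irr(\C)$, the squared braiding (monodromy) $c_{X,g}\circ c_{g,X}$ acts on $g\otimes X$ as a nonzero scalar $\lambda_{g,X}\in k^\times$. Indeed, by Lemma \ref{action_of_G(C)}, $g\otimes X$ is simple, so this endomorphism must be scalar by Schur's lemma (using that $k$ is algebraically closed).

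The second step is to show that the monodromy of $g$ with $X\otimes X^*$ is trivial. Using naturality of the braiding together with the hexagon axioms, the double braiding $c_{X\otimes X^*,g}\circ c_{g,X\otimes X^*}$ factors through the monodromies of $g$ with $X$ and of $g$ with $X^*$ separately; since the braiding is compatible with rigid duality, the scalar produced by $X^*$ is $\lambda_{g,X}^{-1}$, which cancels the scalar $\lambda_{g,X}$ coming from $X$. Hence every simple constituent $Y$ of $X\otimes X^*$ centralizes $g$. Since $\C_{ad}$ is generated by such $Y$'s (as $X$ ranges over $\Irr(\C)$), we obtain $\C_{ad}\subseteq (\C_{pt})'$, which, by symmetry of the centralizer relation, is equivalent to $\C_{pt}\subseteq (\C_{ad})'$.

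To conclude, note that $(\C_{ad})_{pt}$ is by definition the subcategory generated by the invertible simples of $\C_{ad}$, and such an object is invertible in $\C$ as well, so $(\C_{ad})_{pt}\subseteq \C_{pt}\cap \C_{ad}$. Combining the two inclusions from the previous step, every object of $(\C_{ad})_{pt}$ lies simultaneously in $\C_{pt}$ and in $\C_{ad}\subseteq (\C_{pt})'$. In particular every pair of objects of $(\C_{ad})_{pt}$ centralizes one another, so $(\C_{ad})_{pt}\subseteq ((\C_{ad})_{pt})'$, i.e.\ $(\C_{ad})_{pt}$ is symmetric.

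The only genuinely delicate point is the scalar cancellation in the second step, and that is handled by the standard compatibility of braiding with duals; all other steps are formal consequences of Lemma \ref{action_of_G(C)} and the definition of $\C_{ad}$.
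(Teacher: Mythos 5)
Your proof is correct, and it reaches the lemma by a genuinely different route than the paper. The paper's proof is a two-line application of \cite[Proposition 3.25]{drinfeld2010braided}: it quotes $(\C_{ad})'=(\C')^{co}$, observes that $g\otimes g^*\cong \1\in \C'$ for every invertible $g$, so that the commutator $(\C')^{co}$ contains $\C_{pt}$ by definition, and then concludes $(\C_{ad})_{pt}=\C_{ad}\cap \C_{pt}\subseteq \C_{ad}\cap(\C_{ad})'=\Z_2(\C_{ad})$. You instead establish the equivalent inclusion $\C_{pt}\subseteq(\C_{ad})'$ from first principles: simplicity of $g\otimes X$ (Lemma \ref{action_of_G(C)}) plus Schur's lemma makes the monodromy of $g$ with a simple $X$ a scalar $\lambda_{g,X}$; the hexagon axioms make these scalars multiplicative, so the monodromy of $g$ with $X\otimes X^*$ is $\lambda_{g,X}\lambda_{g,X^*}\id$; and restricting to the summand $\1\subseteq X\otimes X^*$, where the monodromy is trivially the identity, forces $\lambda_{g,X}\lambda_{g,X^*}=1$ --- note this makes your appeal to ``compatibility of braiding with rigid duality'' superfluous, since the cancellation already follows from your own hexagon factorization evaluated on the constituent $\1$. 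By naturality the trivial monodromy passes to every simple constituent of $X\otimes X^*$, giving $\C_{ad}\subseteq(\C_{pt})'$, and your concluding step ($(\C_{ad})_{pt}=\C_{ad}\cap\C_{pt}\subseteq\C_{ad}\cap(\C_{ad})'=\Z_2(\C_{ad})$) coincides with the paper's. What each approach buys: the paper's argument is shorter but rests on the commutator machinery of \cite{drinfeld2010braided}, whereas yours is self-contained, uses nothing beyond Schur's lemma and the hexagons, and yields the quantitative by-product $\lambda_{g,X^*}=\lambda_{g,X}^{-1}$; incidentally, the inclusion $\C_{ad}\subseteq(\C_{pt})'$ that you reprove is exactly the fact the paper elsewhere imports as \cite[Proposition 2.1]{DNS2019} in the proof of Lemma \ref{centralizer2}.
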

\begin{proof}
Applying \cite[Proposition 3.25]{drinfeld2010braided}, we obtain that $(\C_{ad})'=(\C')^{co}$ which contains $\C_{pt}$ as a fusion subcategory, by the definition of a commutator. The M\"{u}ger center of $\C_{ad}$  is $\C_{ad}\bigcap (\C_{ad})'$ which thus contains $\C_{ad}\bigcap \C_{pt}=(\C_{ad})_{pt}$. Hence $(\C_{ad})_{pt}$ is a symmetric category.
\end{proof}

%

%

The following theorem is known as the \emph{M\"{u}ger  Decomposition Theorem}, since it is due to M\"{u}ger \cite[Theorem 4.2]{muger2003structure} when $\C$ is modular.

\begin{theorem}{\cite[Theorem 3.13]{drinfeld2010braided}}\label{MugerThm}
Let $\C$ be a braided fusion category and $\D$ be a non-degenerate subcategory of $\C$. Then $\C$ is braided equivalent to $\D\boxtimes \D'$, where $\D'$ is the centralizer of $\D$ in $\C$.
\end{theorem}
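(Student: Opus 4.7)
The plan is to construct a candidate braided tensor functor $F\colon \D \boxtimes \D' \to \C$ sending $X \boxtimes Y$ to $X \otimes Y$, and then show it is an equivalence. The universal property of the Deligne tensor product makes $F$ a well-defined $k$-linear functor; the tensor structure comes from the permutation isomorphism
\begin{equation}
(X_1 \otimes Y_1) \otimes (X_2 \otimes Y_2) \xrightarrow{\id \otimes c_{Y_1,X_2}^{-1} \otimes \id} (X_1 \otimes X_2) \otimes (Y_1 \otimes Y_2),\nonumber
\end{equation}
and the centralizer condition $c_{X_2,Y_1}\circ c_{Y_1,X_2}=\id$ for $X_2\in\D$, $Y_1\in\D'$ is exactly what guarantees that $F$ intertwines the box-product braiding on $\D\boxtimes\D'$ with the braiding of $\C$.

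Next I would check fully faithfulness on simple objects. For simples $X\in\D$, $Y\in\D'$, I expect to compute
\begin{equation}
\dim\End_\C(X\otimes Y) = [X^*\otimes X\otimes Y^*\otimes Y:\1],\nonumber
\end{equation}
decompose $X^*\otimes X$ and $Y^*\otimes Y$ into simples of $\D$ and $\D'$ respectively, and use the fact that for simples $D\in\D$, $E\in\D'$ one has $[D\otimes E:\1]=\dim\Hom(D^*,E)$, which vanishes unless $D\cong E^*$ lies in $\D\cap\D'$. Non-degeneracy of $\D$ gives $\D\cap\D'=\Z_2(\D)=\vect$, so only the trivial summands contribute and $\End_\C(X\otimes Y)=k$. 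Running the same argument on $\Hom_\C(X\otimes Y, X'\otimes Y')$ shows that non-isomorphic $X\boxtimes Y$ go to non-isomorphic simples; together these give fully faithfulness of $F$.

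Essential surjectivity then reduces to the dimension identity
\begin{equation}
\FPdim(\D)\cdot\FPdim(\D') = \FPdim(\C),\nonumber
\end{equation}
since $F(\D\boxtimes\D')$ is a fusion subcategory of $\C$ of Frobenius--Perron dimension $\FPdim(\D)\FPdim(\D')$, and a fusion subcategory of full dimension must coincide with $\C$. I expect this dimension identity to be the main obstacle: it is the specialization to non-degenerate $\D$ of the Drinfeld--M\"{u}ger centralizer formula, and a self-contained argument would proceed through the double commutant, establishing $(\D')'=\D$ whenever $\Z_2(\D)=\vect$, which in turn exploits the non-degeneracy of the squared-braiding pairing on $\D$ under this hypothesis. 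Once this identity is in hand, the three steps combine to give the desired braided equivalence $\C\simeq\D\boxtimes\D'$.
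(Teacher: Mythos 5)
The paper offers no proof of this statement: it is imported verbatim from \cite[Theorem 3.13]{drinfeld2010braided} (M\"{u}ger's decomposition theorem), so the comparison is against the proof in that reference. Your outline is indeed the standard argument from the literature: the multiplication functor $F(X\boxtimes Y)=X\otimes Y$ made monoidal and braided via the braiding (the centralizing condition $c_{X,Y}c_{Y,X}=\id$ for $X\in\D$, $Y\in\D'$ is exactly what makes the hexagon-based tensor constraint compatible with the two braidings), full faithfulness from the Hom computation using $\D\cap\D'=\Z_2(\D)=\vect$, and essential surjectivity from $\FPdim(\D)\FPdim(\D')=\FPdim(\C)$. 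The first two steps as you present them are sound.

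The genuine gap is in your proposed route to the dimension identity. You suggest deriving it through the double commutant, ``establishing $(\D')'=\D$ whenever $\Z_2(\D)=\vect$'' --- but this is false in the stated generality. By the paper's Lemma \ref{double_centralzer} (DGNO Corollary 3.11), $(\D')'=\D''=\D\vee\Z_2(\C)$, so $(\D')'=\D$ forces $\Z_2(\C)\subseteq\D$, and since $\D\cap\Z_2(\C)\subseteq\Z_2(\D)=\vect$ this happens only when $\C$ itself is non-degenerate. The theorem, however, is stated for an arbitrary braided $\C$, and this paper applies it precisely to slightly degenerate $\C$, where $\Z_2(\C)=\svect\subseteq\D'$ and hence $(\D')'=\D\vee\svect\supsetneq\D$; the minimal counterexample is $\C=\svect$, $\D=\vect$, where $(\D')'=\svect$ yet the decomposition $\svect\cong\vect\boxtimes\svect$ still holds. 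Moreover, even where $(\D')'=\D$ is true it does not by itself produce the identity $\FPdim(\D)\FPdim(\D')=\FPdim(\C)$. The correct input is the centralizer dimension formula $\FPdim(\D)\FPdim(\D')=\FPdim(\C)\FPdim(\D\cap\Z_2(\C))$ of \cite[Theorem 3.14]{drinfeld2010braided} --- the same formula this paper invokes in the proofs of Propositions \ref{non_degenerate} and \ref{slig_degen} --- combined with $\D\cap\Z_2(\C)=\vect$; its proof is not elementary (DGNO reduce it to M\"{u}ger's modular case via the Drinfeld center), so in a blind write-up this step must either be cited or reproved along those lines, not patched by a double-commutant argument.
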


For a pair of  fusion subcategories $\A,\B$ of $\D$, we use $\A\vee \B$ to denote the smallest fusion subcategory of $\C$ containing $\A$ and $\B$. The following result will be frequently used in our proof.
\begin{lemma}\cite[Corollary 3.11]{drinfeld2010braided}\label{double_centralzer}
Let $\C$ be a braided fusion category. If $\D$ is any fusion subcategory of $\C$ then $\D''=\D\vee\mathcal{Z}_2(\C)$.
\end{lemma}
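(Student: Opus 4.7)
The statement to prove is $\D'' = \D \vee \Z_2(\C)$ for any fusion subcategory $\D$ of a braided fusion category $\C$. My plan is to establish containment in one direction on abstract grounds and then close the gap using Frobenius-Perron dimension counting.

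First, I would dispose of the easy inclusion $\D \vee \Z_2(\C) \subseteq \D''$. Clearly $\D \subseteq \D''$ since every object of $\D$ centralizes everything in $\D'$ (this is essentially the definition of $\D'$). For the other generating piece, any simple $X \in \Z_2(\C)$ satisfies $c_{Y,X}c_{X,Y} = \id$ for \emph{all} $Y \in \C$, in particular for all $Y \in \D'$; hence $X \in \D''$. Since $\D''$ is a fusion subcategory containing both $\D$ and $\Z_2(\C)$, it contains their join.

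For the reverse inclusion I would compare Frobenius-Perron dimensions. The key input is the centralizer dimension formula from Drinfeld-Gelaki-Nikshych-Ostrik (a consequence of the braided version of Müger's results):
\begin{equation*}
\FPdim(\B)\,\FPdim(\B') \;=\; \FPdim(\C)\,\FPdim(\B \cap \Z_2(\C))
\end{equation*}
for any fusion subcategory $\B \subseteq \C$. Applying this with $\B = \D$ gives
$\FPdim(\D)\FPdim(\D') = \FPdim(\C)\FPdim(\D \cap \Z_2(\C))$,
while applying it with $\B = \D'$ and using $\Z_2(\C) \subseteq \D'$ (so $\D' \cap \Z_2(\C) = \Z_2(\C)$) gives
$\FPdim(\D')\FPdim(\D'') = \FPdim(\C)\FPdim(\Z_2(\C))$.
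Dividing the second by the first yields
\begin{equation*}
\FPdim(\D'') \;=\; \frac{\FPdim(\D)\,\FPdim(\Z_2(\C))}{\FPdim(\D \cap \Z_2(\C))}.
\end{equation*}
The right-hand side is exactly $\FPdim(\D \vee \Z_2(\C))$ by the standard join-intersection identity for fusion subcategories. Combined with the inclusion from the first paragraph and the fact that equality of Frobenius-Perron dimensions forces equality of fusion subcategories, we conclude $\D'' = \D \vee \Z_2(\C)$.

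The main obstacle is really external: one must have the centralizer dimension formula and the join-intersection identity $\FPdim(\A \vee \B)\FPdim(\A \cap \B) = \FPdim(\A)\FPdim(\B)$ for commuting fusion subcategories of a braided fusion category. Both facts are nontrivial and rely on the machinery developed in \cite{drinfeld2010braided}. Given those inputs, however, the argument reduces to the dimension-bookkeeping sketched above; no extra work on the structure of the simple objects in $\D''$ is required. An alternative route would be to argue directly that a simple object centralized by everything centralizing $\D$ must, modulo the transparent objects $\Z_2(\C)$, lie in $\D$—but this seems harder to pin down without the dimension count.
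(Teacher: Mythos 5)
The paper itself offers no proof of this lemma: it is imported verbatim as \cite[Corollary 3.11]{drinfeld2010braided}, so there is no internal argument to compare against, and your write-up amounts to an independent derivation. As such it is essentially correct. The easy inclusion $\D\vee\mathcal{Z}_2(\C)\subseteq\D''$ is argued properly (centralizing is a symmetric relation, so $\D\subseteq\D''$, and transparent objects centralize all of $\C\supseteq\D'$, so $\mathcal{Z}_2(\C)\subseteq\D''$). The dimension count is also right: applying the formula $\FPdim(\B)\FPdim(\B')=\FPdim(\C)\FPdim(\B\cap\mathcal{Z}_2(\C))$ of \cite[Theorem 3.14]{drinfeld2010braided} to $\B=\D$ and to $\B=\D'$, together with $\mathcal{Z}_2(\C)\subseteq\D'$, gives $\FPdim(\D'')=\FPdim(\D)\FPdim(\mathcal{Z}_2(\C))/\FPdim(\D\cap\mathcal{Z}_2(\C))$; and a fusion subcategory of the same Frobenius--Perron dimension as one containing it must coincide with it. This input is consistent with the paper's toolkit, since Theorem 3.14 is used as a black box elsewhere (Propositions \ref{non_degenerate} and \ref{slig_degen}).

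Two caveats. First, the ``standard join-intersection identity'' $\FPdim(\A\vee\B)\FPdim(\A\cap\B)=\FPdim(\A)\FPdim(\B)$ is not an off-the-shelf fact at the generality you state it, and you do not actually need it: in the one case required it follows from the same centralizer formula. Indeed, $(\A\vee\B)'=\A'\cap\B'$ holds in any braided fusion category, and $\mathcal{Z}_2(\C)'=\C$, so $(\D\vee\mathcal{Z}_2(\C))'=\D'$; applying Theorem 3.14 to $\D\vee\mathcal{Z}_2(\C)$ then gives $\FPdim(\D\vee\mathcal{Z}_2(\C))\FPdim(\D')=\FPdim(\C)\FPdim(\mathcal{Z}_2(\C))$, and dividing by the equation for $\D$ yields exactly $\FPdim(\D\vee\mathcal{Z}_2(\C))=\FPdim(\D)\FPdim(\mathcal{Z}_2(\C))/\FPdim(\D\cap\mathcal{Z}_2(\C))=\FPdim(\D'')$. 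With this patch your argument is complete. Second, a logical-dependency caveat: in \cite{drinfeld2010braided} this statement is Corollary 3.11, appearing as a consequence of M\"uger's centralizer theorem for non-degenerate categories (their Theorem 3.10) and \emph{before} Theorem 3.14, so your derivation runs the implication in the opposite direction from the source. That is legitimate provided Theorem 3.14 is accepted as an independent input (as this paper does), but it is a genuinely different, and less self-contained, route than the one the cited reference takes.
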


\begin{proposition}\label{non_degenerate}
Let $\C$ be a non-degenerate fusion category. Then $|G(\C)|=|\mathcal{U}(\C)|$.
\end{proposition}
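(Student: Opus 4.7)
The plan is to prove this by matching Frobenius--Perron dimensions via the duality between $\C_{pt}$ and $\C_{ad}$ that is available in the non-degenerate case. The three ingredients are: the universal grading formula \eqref{FPdimgrading}, the identity $\FPdim(\D)\FPdim(\D')=\FPdim(\C)$ valid for every fusion subcategory of a non-degenerate braided fusion category, and the identification $(\C_{pt})'=\C_{ad}$ in the non-degenerate case.

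First I would write down the universal grading $\C=\bigoplus_{g\in\mathcal{U}(\C)}\C_{g}$ with trivial component $\C_{e}=\C_{ad}$. By the dimension formula for faithful extensions recalled in \eqref{FPdimgrading}, this gives
\[
\FPdim(\C)=|\mathcal{U}(\C)|\,\FPdim(\C_{ad}).
\]
Next I would apply the centralizer dimension identity (a standard consequence of non-degeneracy, see \cite{drinfeld2010braided}) to the subcategory $\D=\C_{pt}$ to obtain $\FPdim(\C_{pt})\,\FPdim((\C_{pt})')=\FPdim(\C)$. Since $\C_{pt}$ is a pointed fusion category with $|G(\C)|$ simple objects of dimension $1$, one has $\FPdim(\C_{pt})=|G(\C)|$.

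The step that carries the real content is the identification $(\C_{pt})'=\C_{ad}$. By Lemma \ref{double_centralzer}, non-degeneracy ($\mathcal{Z}_2(\C)=\vect$) gives $\D''=\D$ for every fusion subcategory $\D$. Combined with the known inclusion $\C_{pt}\subseteq(\C_{ad})'$ valid in any braided fusion category (a consequence of \cite[Proposition 3.25]{drinfeld2010braided} together with $(\C_{pt})^{co}\supseteq\C_{ad}$), taking centralizers yields $(\C_{pt})'\supseteq(\C_{ad})''=\C_{ad}$; equality then follows from dimension counting after the previous step, or directly from the fact that $\C_{pt}$ and $\C_{ad}$ centralize each other in the non-degenerate case.

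Putting everything together,
\[
|\mathcal{U}(\C)|=\frac{\FPdim(\C)}{\FPdim(\C_{ad})}=\FPdim(\C_{pt})=|G(\C)|,
\]
which is the desired equality. The main obstacle is the third ingredient, namely pinning down $(\C_{pt})'=\C_{ad}$; once that duality is in hand, the remainder of the argument is pure dimension bookkeeping.
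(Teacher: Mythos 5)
Your proposal is correct and takes essentially the same route as the paper: both arguments combine the grading formula $\FPdim(\C)=|\U(\C)|\,\FPdim(\C_{ad})$ with the centralizer dimension identity from \cite[Theorem 3.14]{drinfeld2010braided} and the duality between $\C_{pt}$ and $\C_{ad}$. The paper merely applies the dimension identity to $\D=\C_{ad}$ and cites \cite[Corollary 3.27]{drinfeld2010braided} for $\FPdim(\C_{pt})=\FPdim(\C_{ad}')$, which is the mirror image of your application to $\D=\C_{pt}$ via $(\C_{pt})'=\C_{ad}$.
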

\begin{proof}
By \cite[Theorem 3.14]{drinfeld2010braided} and the non-degeneracy of $\C$, we have

\begin{equation}
\begin{split}
&\FPdim(\C_{ad})\FPdim(\C_{ad}')\\
=&\FPdim(\C)\FPdim(\C_{ad}\cap \mathcal{Z}_2(\C))\\
=&\FPdim(\C).
\end{split}\nonumber
\end{equation}

By \cite[Corollary 3.27]{drinfeld2010braided}, we have $\FPdim(\C_{pt})=\FPdim(\C_{ad}')$. This induces an equation $\FPdim(\C_{ad})\FPdim(\C_{pt})=\FPdim(\C)$.
On the other hand, $\FPdim(\C)=|\mathcal{U}(\C)| \FPdim(\C_{ad})$ (see Subsection \ref{sec2.2}). Hence $|G(\C)|=\FPdim(\C_{pt})=|\mathcal{U}(\C)|$.
\end{proof}

\begin{proposition}\label{slig_degen}
Let $\C$ be a slightly degenerate braided fusion category. Then one of the following holds true.

(1)\, $|G(\C)|=|\mathcal{U}(\C)|$. If this is the case then $\mathcal{Z}_2(\C) \nsubseteq \C_{ad}$.

(2)\, $|G(\C)|=2|\mathcal{U}(\C)|$. If this is the case then $\mathcal{Z}_2(\C_{ad})=\mathcal{Z}_2(\C_{ad}^{'})$ contains the category $\svect$.
\end{proposition}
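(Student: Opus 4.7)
The plan is to repeat the Frobenius--Perron dimension computation used in the proof of Proposition \ref{non_degenerate}, but keep the correction term that appears when the M\"uger center is nontrivial. Since $\C$ is slightly degenerate, $\Z_2(\C) \cong \svect$ has dimension exactly $2$, so it has only two fusion subcategories ($\vect$ and itself); the dichotomy in the statement will correspond to the two possibilities for $\C_{ad} \cap \Z_2(\C)$.

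Concretely, I would apply \cite[Theorem 3.14]{drinfeld2010braided} to $\D = \C_{ad}$ to get
$$\FPdim(\C_{ad})\FPdim(\C_{ad}') = \FPdim(\C)\,\FPdim(\C_{ad} \cap \Z_2(\C)),$$
then substitute $\FPdim(\C_{ad}') = \FPdim(\C_{pt}) = |G(\C)|$ from \cite[Corollary 3.27]{drinfeld2010braided} and the grading identity $\FPdim(\C) = |\mathcal{U}(\C)|\FPdim(\C_{ad})$ from Subsection \ref{sec2.2}. Cancelling $\FPdim(\C_{ad})$ yields
$$|G(\C)| = |\mathcal{U}(\C)| \cdot \FPdim(\C_{ad} \cap \Z_2(\C)).$$
Because the right-hand factor is $1$ or $2$, we land exactly in case (1) or case (2), and moreover the value of that factor is a restatement of whether $\Z_2(\C) \nsubseteq \C_{ad}$ or $\Z_2(\C) \subseteq \C_{ad}$.

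It remains to establish the extra claim in case (2). The inclusion $\svect \subseteq \Z_2(\C_{ad}) \cap \Z_2(\C_{ad}')$ is essentially automatic: $\Z_2(\C)$ lies inside $\C_{ad}$ by the case hypothesis, and inside $\C_{ad}'$ because $\Z_2(\C) = \C'$ centralizes all of $\C$; moreover $\Z_2(\C)$ centralizes every fusion subcategory, so it sits in the M\"uger center of each. For the equality $\Z_2(\C_{ad}) = \Z_2(\C_{ad}')$, I would invoke Lemma \ref{double_centralzer}: in case (2) we have $(\C_{ad})'' = \C_{ad} \vee \Z_2(\C) = \C_{ad}$, so
$$\Z_2(\C_{ad}') = \C_{ad}' \cap (\C_{ad}')' = \C_{ad}' \cap \C_{ad} = \Z_2(\C_{ad}).$$

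I do not anticipate a genuine obstacle here; the entire argument is bookkeeping on top of three ingredients already available in the excerpt (Theorem 3.14 and Corollary 3.27 of \cite{drinfeld2010braided}, plus Lemma \ref{double_centralzer}). The only subtlety worth flagging is to recognize at the outset that slight degeneracy forces $\FPdim(\C_{ad} \cap \Z_2(\C)) \in \{1,2\}$, which is precisely what converts the dimension identity into the binary dichotomy of the proposition.
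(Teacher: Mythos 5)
Your proposal is correct and follows essentially the same route as the paper: the same application of \cite[Theorem 3.14]{drinfeld2010braided} to $\C_{ad}$, the same identification $\FPdim(\C_{ad}')=\FPdim(\C_{pt})=|G(\C)|$, and the same dichotomy $\C_{ad}\cap\Z_2(\C)\in\{\vect,\svect\}$ driving the two cases. Two small remarks. First, for $\C_{ad}'=\C_{pt}$ in the slightly degenerate setting you should cite \cite[Proposition 3.29]{drinfeld2010braided} rather than \cite[Corollary 3.27]{drinfeld2010braided}; the latter is the non-degenerate statement (it is what the paper uses in Proposition \ref{non_degenerate}), while the paper's own proof of Proposition \ref{slig_degen} correctly invokes Proposition 3.29. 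Second, your ending of case (2) is a mild streamlining of the paper's: the paper computes $\Z_2(\C_{ad}')=\C_{ad}'\cap(\C_{ad}\vee\Z_2(\C))$ and then needs the distributivity result \cite[Lemma 5.6]{drinfeld2010braided} (together with $\Z_2(\C)=\svect\subseteq\C_{ad}'=\C_{pt}$) to rewrite this as $\Z_2(\C_{ad})\vee\Z_2(\C)=\Z_2(\C_{ad})$, whereas you observe that in case (2) one has $\Z_2(\C)\subseteq\C_{ad}$, so Lemma \ref{double_centralzer} gives $\C_{ad}''=\C_{ad}\vee\Z_2(\C)=\C_{ad}$ outright and $\Z_2(\C_{ad}')=\C_{ad}'\cap\C_{ad}=\Z_2(\C_{ad})$ follows with no distributivity needed; both arguments are valid, and yours uses one fewer ingredient.
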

\begin{proof}
By \cite[Proposition 3.29]{drinfeld2010braided}, $\C_{ad}^{'}=\C_{pt}$. Hence \cite[Theorem 3.14]{drinfeld2010braided} shows that
\begin{equation}\label{eq1}
\begin{split}
&\quad\FPdim(\C_{ad})\FPdim(\C_{pt})\\
&=\FPdim(\C_{ad})\FPdim(\C_{ad}^{'})\\
&=\FPdim(\C)\FPdim(\C_{ad}\cap\mathcal{Z}_2(\C)).
\end{split}\nonumber
\end{equation}

Since $\C_{ad}\cap\mathcal{Z}_2(\C)$ is a fusion subcategory of $\mathcal{Z}_2(\C)= \svect$, we have $\C_{ad}\cap \mathcal{Z}_2(\C)=\vect$ or $\svect$.

\medbreak
If $\C_{ad}\cap \mathcal{Z}_2(\C)=\vect$  then $$\FPdim(\C_{ad})\FPdim(\C_{pt})=\FPdim(\C)=\FPdim(\C_{ad})|\mathcal{U}(\C)|.$$
Hence $|G(\C)|=\FPdim(\C_{pt})=|\mathcal{U}(\C)|$. In this case $\mathcal{Z}_2(\C) \nsubseteq \C_{ad}$ since $\C_{ad}\cap \mathcal{Z}_2(\C)=\vect$.

\medbreak
If $\C_{ad}\cap \mathcal{Z}_2(\C)=\svect$ then $$\FPdim(\C_{ad})\FPdim(\C_{pt})=2\FPdim(\C)=2\FPdim(\C_{ad})|\mathcal{U}(\C)|.$$
Hence $|G(\C)|=\FPdim(\C_{pt})=2|\mathcal{U}(\C)|$. Moreover, in this case we have
$$\mathcal{Z}_2(\C_{ad})=\C_{ad}\cap \C_{ad}^{'}=\C_{ad}\cap \C_{pt}\supseteq \C_{ad}\cap \svect=\svect.$$

By Lemma \ref{double_centralzer}, we have
\begin{equation}\label{eq2}
\begin{split}
\mathcal{Z}_2(\C_{ad}^{'})&=\C_{ad}^{'}\cap \C_{ad}^{''}=\C_{ad}^{'}\cap (\C_{ad}\vee \mathcal{Z}_2(\C))\\
&=(\C_{ad}^{'}\cap \C_{ad})\vee \mathcal{Z}_2(\C)= \mathcal{Z}_2(\C_{ad})\vee \mathcal{Z}_2(\C)\\
&=\mathcal{Z}_2(\C_{ad}).
\end{split}\nonumber
\end{equation}

The third equation follows from \cite[Lemma 5.6]{drinfeld2010braided}, $\C$ being braided  and the fact that $\mathcal{Z}_2(\C)=\svect$ is contained in $\C_{ad}^{'}=\C_{pt}$. The last equation follows from the fact that $\mathcal{Z}_2(\C)=\svect$ is a fusion subcategory of $\mathcal{Z}_2(\C_{ad})$.
\end{proof}

\begin{corollary}\label{slig_degen_triv_grad}
Let $\C$ be a slightly degenerate braided fusion category with trivial universal grading. Then we have

 (1)\, $\C_{pt}=\svect$.

 (2)\, The stabilizer $G[X]$ is trivial for every $X\in \Irr(\C)$.
\end{corollary}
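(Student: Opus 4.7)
The plan is to deduce both parts from Proposition \ref{slig_degen}, together with a short twist computation for part (2).

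For (1), the triviality of $\mathcal{U}(\C)$ gives $\C_{ad}=\C$, so $\svect=\mathcal{Z}_2(\C)\subseteq\C_{ad}$. This excludes the first alternative of Proposition \ref{slig_degen} (which requires $\mathcal{Z}_2(\C)\nsubseteq\C_{ad}$), forcing the second: $|G(\C)|=2|\mathcal{U}(\C)|=2$. Since $\svect\subseteq\C_{pt}$ (as $\svect$ is pointed) and $\FPdim(\C_{pt})=|G(\C)|=2=\FPdim(\svect)$, we conclude $\C_{pt}=\svect$.

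For (2), if $X$ is invertible then $g\otimes X$ is an invertible simple, distinct from $X$ for every $g\neq\1$ in $G(\C)$, so $G[X]=\{\1\}$. Suppose next that $X$ is non-invertible. By part (1), $G(\C)=\{\1,\delta\}$ with $\delta$ the generator of $\svect$, and the only non-trivial possibility for $G[X]$ is $\delta\otimes X\cong X$. To rule this out, note that $\delta\in\mathcal{Z}_2(\C)$ forces the squared braiding $c_{X,\delta}\,c_{\delta,X}=\id_{\delta\otimes X}$, so the standard ribbon-twist identity collapses to $\theta_{\delta\otimes X}=\theta_\delta\,\theta_X$. Now $\delta$, being the fermion generating $\svect$, has twist $\theta_\delta=-1$. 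Hence if $\delta\otimes X\cong X$ we would obtain $\theta_X=\theta_{\delta\otimes X}=-\theta_X$, contradicting the invertibility of twists. Therefore $G[X]=\{\1\}$ for every simple object $X$.

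The main subtlety is the appeal to a ribbon structure (equivalently, pivotal together with braided) on $\C$ for the twist computation; this is implicit in the pre-modular setting of the paper and of \cite{drinfeld2010braided}. If one wishes to stay in bare braided-fusion generality, the same contradiction can be produced diagrammatically, by comparing the two canonical isomorphisms $\delta\otimes\delta\otimes X\to X$ constructed from a chosen $\phi\colon\delta\otimes X\to X$ and from the unit $\delta\otimes\delta\to\1$, and then tracking the sign $-1=c_{\delta,\delta}$ that distinguishes $\svect$ from $\Rep(\Zz_2)$.
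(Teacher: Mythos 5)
Your argument is correct, and part (1) is in substance the paper's own proof: the paper likewise combines Proposition \ref{slig_degen} with triviality of $\U(\C)$ to get $\FPdim(\C_{pt})\in\{1,2\}$ and then uses $\svect=\mathcal{Z}_2(\C)\subseteq\C_{pt}$ to force $\C_{pt}=\svect$; your explicit exclusion of the first alternative via $\mathcal{Z}_2(\C)\subseteq\C_{ad}=\C$ is just a slightly more detailed phrasing of the same step. For part (2) the paper does not argue at all: it simply cites M\"{u}ger's lemma (\cite[Lemma 5.4]{muger2003structure}; elsewhere in the paper the same statement is quoted from \cite{muger2000galois}) that the transparent fermion $\delta$ satisfies $\delta\otimes X\ncong X$ for all simple $X$, whereas you re-derive that lemma. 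Your twist computation is fine but, as you note yourself, it presupposes a ribbon structure, and moreover it presupposes the \emph{right} one: a bare braided fusion category is not known to be ribbon, and even on $\svect$ alone there are two ribbon structures, with $\theta_\delta=-1$ (positive spherical dimensions, $d_\delta=1$) and $\theta_\delta=+1$ (super-dimension $d_\delta=-1$); the relation $\theta_\delta=d_\delta\,c_{\delta,\delta}$ shows your sign claim is tied to choosing positive dimensions, since what is intrinsic to $\svect$ is only $c_{\delta,\delta}=-\id$. So in full generality the load-bearing argument is the diagrammatic one you sketch at the end --- comparing $\phi\circ(\id_\delta\otimes\phi)$ with the map induced by $\delta\otimes\delta\cong\1$ and extracting the sign from $c_{\delta,\delta}=-1$ via naturality of the braiding and transparency of $\delta$ --- which is exactly how M\"{u}ger's lemma is proved in the literature (compare \cite[Proposition 2.6(i)]{etingof2011weakly}, which this paper itself invokes in the proof of Theorem \ref{slightly-deg1}). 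What your route buys is a self-contained proof independent of the citation; what the paper's route buys is brevity and freedom from any pivotal/ribbon hypothesis. Your separate treatment of invertible $X$ is harmless but unnecessary, since the lemma applies uniformly to all simples and $G(\C)=\{\1,\delta\}$ by part (1).
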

\begin{proof}
 (1)\, By Proposition \ref{slig_degen} and the assumption that $\mathcal{U}(\C)$ is trivial, we have $\FPdim(\C_{pt})=1$ or $2$. On the other hand, $\mathcal{Z}_2(\C)=\svect$ is a fusion subcategory of $\C_{pt}$. Hence $\FPdim(\C_{pt})=2$ and $\C_{pt}=\svect$.

 (2)\, Let $\delta$ be the unique non-trivial simple object generating $\mathcal{Z}_2(\C)=\C_{pt}$. By \cite[Lemma 5.4]{muger2003structure}, $\delta\otimes X\ncong X$ for every $X\in \Irr(\C)$. Therefore, $G[X]$ is trivial for every $X\in \Irr(\C)$.
\end{proof}

\subsection{Equivariantizations and de-equivariantizations}\label{sec23}
Let $\C$ be a fusion category with an action of a finite group $G$. We then can define a new fusion category $\C^G$ of $G$-equivariant objects in $\C$. An object of this category is a pair $(X,(u_g)_{g\in G})$, where $X$ is an object of $\C$, $u_g : g(X)\to X$ is an isomorphism for all $g\in G$, such that
$$u_{gh}\circ \alpha_{g,h} =u_g \circ g(u_h),$$
where $\alpha_{g,h}: g(h(X))\to gh(X)$ is the natural isomorphism associated to the action. Morphisms
and tensor product of equivariant objects are defined in an obvious way. This new category is called the $G$-equivariantization of $\C$.

In the other direction, let $\C$ be a fusion category and let
$\E = \Rep(G)\subseteq \mathcal{Z}(\C)$ be a Tannakian subcategory that embeds into $\C$ via the forgetful
functor $\mathcal{Z}(\C)\to \C$. Here $\mathcal{Z}(\C)$ denotes the Drinfeld center of $\C$. Let $A={\rm Fun}(G)$ be the algebra of functions on $G$. It is a commutative algebra in $\mathcal{Z}(\C)$. Let $\C_G$ denote the category of left $A$-modules in $\C$. It is a fusion category and called the de-equivariantization of $\C$ by $\E$. See \cite{drinfeld2010braided} for details on equivariantizations and de-equivariantizations.

Equivariantizations and de-equivariantizations are inverse to each other:
$$(\C_G)^G\cong\C\cong(\C^G)_G,$$
and their Frobenius-Perron dimensions have the following relations:
\begin{equation}\label{eq11}
\begin{split}
\FPdim(\C)=|G|\FPdim(\C_G) \,\mbox{\,and}\, \FPdim(\C^G)=|G| \FPdim(\C).
\end{split}
\end{equation}

\section{Structure of generalized near-group fusion categories}\label{sec3}
In the rest of this paper, we assume that the fusion categories involved is not pointed, unless other stated.

\medbreak
Let $\C$ be a fusion category. Recall from Section \ref{sec2.1} that $G:=G(\C)$ acts on $\Irr(\C)$ by left tensor product.

\begin{definition}
A generalized near-group fusion category is a fusion category $\C$ such that $G$ transitively acts on the set $\Irr(\C)-G$.
\end{definition}

Let $\C$ be a generalized near-group fusion category. For simplicity, we assume that $\{X_1,\cdots,X_n\}=\Irr(\C)-G$ is a full list of non-isomorphic non-invertible simple objects of $\C$. By equation \ref{decom1}, we may assume
\begin{equation}\label{decom2}
\begin{split}
X_1\otimes X_1^*=\bigoplus_{h\in \Gamma}h\oplus k_1X_{1}\oplus \cdots\oplus k_nX_{n},
\end{split}
\end{equation}
where $\Gamma=G[X_1]$ is the stabilizer of $X_1$ under the action of $G$, $k_1,\cdots, k_n$ are non-negative integers.

\medbreak

Lemma \ref{fusionrules3} and Proposition \ref{subcategory} are also obtained in \cite{Thornton2012Generalized}. We include their proofs for the sake of completeness
\begin{lemma}\label{fusionrules3}
Let $\C$ be a generalized near-group fusion category. Then the fusion rules of $\C$ are determined by:

(1)\, For any $1\leq i\leq n$, we have
\begin{equation}
\begin{split}
X_i\otimes X_i^*=X_1\otimes X_1^*.
\end{split}\nonumber
\end{equation}

(2)\, For any $1\leq i,j\leq n$, there exists $g_{ij}\in G$ such that
\begin{equation}
\begin{split}
X_i\otimes X_j=\bigoplus_{h\in \Gamma}g_{ij}h\oplus k_1(g_{ij}\otimes X_{1})\oplus \cdots\oplus k_n (g_{ij}\otimes X_{n}).
\end{split}\nonumber
\end{equation}
\end{lemma}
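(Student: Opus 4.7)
The plan is to prove both parts by exploiting the transitive $G$-action on the set $\{X_1,\ldots,X_n\}$, reducing all computations to the given decomposition of $X_1\otimes X_1^*$. For part (1), I would pick $g\in G$ with $X_i\cong g\otimes X_1$, and since $g$ is invertible also $X_i^*\cong X_1^*\otimes g^{-1}$. This yields
$$X_i\otimes X_i^*\cong g\otimes(X_1\otimes X_1^*)\otimes g^{-1},$$
into which I would substitute the assumed decomposition of $X_1\otimes X_1^*$ to obtain
$$X_i\otimes X_i^*\cong \bigoplus_{h\in\Gamma} g h g^{-1} \oplus \sum_{l=1}^n k_l\,(g\otimes X_l\otimes g^{-1}).$$
The invertible summands form the set $g\Gamma g^{-1}$, which by Lemma \ref{conjugate} is exactly $G[X_i]$, while each $g\otimes X_l\otimes g^{-1}$ is again a non-invertible simple object, hence lies in $\{X_1,\ldots,X_n\}$. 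Comparing with the canonical expansion (\ref{decom1}) of $X_i\otimes X_i^*$ and invoking uniqueness of decomposition into simples then identifies both the invertible and the non-invertible parts with those of $X_1\otimes X_1^*$.

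For part (2), I would reduce to part (1) by a second application of transitivity. Since $X_j^*$ is also a non-invertible simple, there exists $g_{ij}\in G$ with $X_i\cong g_{ij}\otimes X_j^*$, equivalently $X_j\cong X_i^*\otimes g_{ij}$. Substituting,
$$X_i\otimes X_j\cong X_i\otimes X_i^*\otimes g_{ij},$$
and applying part (1) followed by the given decomposition of $X_1\otimes X_1^*$ gives
$$X_i\otimes X_j\cong \bigoplus_{h\in\Gamma} h g_{ij} \oplus \sum_{l=1}^n k_l\,(X_l\otimes g_{ij}).$$
The form stated in the lemma is then obtained by writing $hg_{ij}=g_{ij}(g_{ij}^{-1}hg_{ij})$ and re-indexing via the conjugation action, which bijects $\Gamma$ with itself and permutes the set $\{X_1,\ldots,X_n\}$ through right-tensoring by the invertible $g_{ij}$ (an instance of Lemma \ref{action_of_G(C)}).

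The main obstacle I anticipate is the well-definedness of the data $(\Gamma,k_1,\ldots,k_n)$ independently of the base simple object, since Lemma \ref{conjugate} only gives $G[X_i]=g\Gamma g^{-1}$, a conjugate of $\Gamma$ rather than $\Gamma$ itself. The literal identity $X_i\otimes X_i^*=X_1\otimes X_1^*$ demands that $g\Gamma g^{-1}=\Gamma$ for every $g\in G$ and that the tuple $(k_1,\ldots,k_n)$ is fixed by the permutation $\sigma_g$ of $\{X_1,\ldots,X_n\}$ induced by conjugation by $g$. I would establish this invariance by swapping the roles of $X_1$ and $X_i$ as the distinguished base object and using transitivity to force the two resulting decompositions of the common object $X_i\otimes X_i^*$ to agree; alternatively, a Frobenius--Perron dimension count combined with (\ref{decom1}) forces the invertible counts and the $k_l$ multiplicities to match. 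This invariance is the real content of part (1) beyond the mechanical substitution, and part (2) then reduces to bookkeeping under right-tensoring by an invertible.
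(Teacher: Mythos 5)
Your reduction via left translates only proves a conjugated version of (1), and the crux you correctly flag --- that the pair $(\Gamma,(k_1,\ldots,k_n))$ is literally the same for every base object --- is not closed by either of your proposed patches. Swapping the distinguished base object just reproduces Lemma \ref{conjugate}: comparing your conjugated decomposition with the canonical decomposition (\ref{decom1}) of $X_i\otimes X_i^*$ yields $G[X_i]=g\Gamma g^{-1}$ together with a matching of multiplicities up to the conjugation permutation, which is exactly what you already had; and choosing the reverse translate $X_1\cong g^{-1}\otimes X_i$ only gives invariance under conjugation by $g^{-1}g=e$, i.e.\ no new constraint. The Frobenius--Perron count fares no better: it matches only the numbers $|G[X_i]|=|\Gamma|$ and the sum $\sum_l k_l$, and cannot distinguish $\Gamma$ from a conjugate subgroup, nor $(k_1,\ldots,k_n)$ from a permutation of itself. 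Note also that you cannot quietly assume $g\Gamma g^{-1}=\Gamma$: in the paper, normality of $\Gamma$ (Proposition \ref{normalsubgroup}(1)) is \emph{deduced from} this lemma via Lemma \ref{conjugate}, so presupposing it would be circular.

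The missing idea is to apply transitivity to the duals rather than to the objects themselves. Since $X_i^*$ is a non-invertible simple object, there is $g_i\in G$ with $X_i^*\cong g_i\otimes X_1^*$; dualizing, and using $X^{**}\cong X$ for simple objects of a fusion category, one gets $X_i\cong X_i^{**}\cong X_1\otimes g_i^*$, i.e.\ every non-invertible simple is also a \emph{right} translate of $X_1$. Then the invertibles cancel in the middle: $X_i\otimes X_i^*\cong X_1\otimes g_i^*\otimes g_i\otimes X_1^*\cong X_1\otimes X_1^*$, which is the literal identity of (1) with no conjugation at all --- this is the paper's proof. For (2) the paper likewise places the invertible on the left from the start: $X_i\cong g_{ij}\otimes X_j^*$ gives $X_i\otimes X_j\cong g_{ij}\otimes(X_j^*\otimes X_j)$, and since $X_j^*\otimes X_j\cong X_1\otimes X_1^*$ by (1) applied to $X_j^*$, left tensoring the fixed decomposition by $g_{ij}$ produces the stated form directly, with no need to commute $g_{ij}$ past the summands. (Your version of (2), with $g_{ij}$ on the right, would in fact be repairable once literal (1) is in hand, since literal (1) is equivalent to the conjugation invariance you invoke; but as written your proof of (1) is incomplete, so the argument has a genuine gap.)
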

\begin{proof}
(1)\, Since $G$ transitively acts on $\Irr(\C)-G(\C)$, there exists $g_i\in G$ such that $X_i^*=g_i\otimes X_1^*$ for any $i$. Then
\begin{equation}
\begin{split}
X_i\otimes X_i^*&\cong X_i^{**}\otimes X_i^*\cong(g_i\otimes X_1^*)^*\otimes (g_i\otimes X_1^*)\\
&\cong X_1\otimes g_i^*\otimes g_i\otimes X_1^*\cong X_1\otimes X_1^*.
\end{split}\nonumber
\end{equation}
(2)\, For any $i,j$, there exist $g_{ij}\in G$ such that $X_i\cong g_{ij}\otimes X_j^*$. Then
\begin{equation}
\begin{split}
X_i\otimes X_j&\cong g_{ij}\otimes X_j^*\otimes X_j\cong g_{ij}\otimes(\bigoplus_{h\in \Gamma}h\oplus k_1X_{1}\oplus \cdots\oplus k_nX_{n})\\
&\cong\bigoplus_{h\in \Gamma}g_{ij}h\oplus k_1(g_{ij}\otimes X_{1})\oplus \cdots\oplus k_s (g_{ij}\otimes X_{n}).
\end{split}\nonumber
\end{equation}
\end{proof}

Let $G,\Gamma$ and $k_1,\cdots,k_n$ be the data associated to $\C$ as in Lemma \ref{fusionrules3}. We shall say $\C$ is a generalized near-group fusion category of type $(G,\Gamma,k_1,\cdots,k_n)$.

\begin{proposition}\label{normalsubgroup}
Let $\C$ be a generalized near-group fusion category of type $(G,\Gamma,k_1,\cdots,k_n)$. Then

(1)\, $\Gamma$ is a normal subgroup of $G$.

(2)\, $\Irr(\C)=G\cup \{X_s|s\in G/\Gamma\}$, where $X_{\overline{g}}=g\otimes X_1$, $g\in G$.

(3)\, The rank of $\C$ is $[G:\Gamma](1+|\Gamma|)$ and $\FPdim(\C)=[G:\Gamma](\FPdim(X)^2+|\Gamma|)$.
\end{proposition}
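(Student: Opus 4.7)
The plan is to deduce all three parts from the transitivity of the $G$-action together with the fusion rules in Lemma \ref{fusionrules3} and the identification of the stabilizer with the invertible part of $X\otimes X^{*}$ from equation (\ref{decom1}).

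For part (1), I would first observe that by equation (\ref{decom1}), the stabilizer $G[X_i]$ coincides exactly with the set of invertible simples occurring in $X_i\otimes X_i^{*}$. By Lemma \ref{fusionrules3}(1), $X_i\otimes X_i^{*}=X_1\otimes X_1^{*}=\bigoplus_{h\in\Gamma}h\oplus k_1X_1\oplus\cdots\oplus k_nX_n$, so $G[X_i]=\Gamma$ for every non-invertible simple $X_i$. Since $G$ acts transitively, every $X_i$ is of the form $g\otimes X_1$ for some $g\in G$, and Lemma \ref{conjugate} gives $G[g\otimes X_1]=g\Gamma g^{-1}$. Combining these, $g\Gamma g^{-1}=\Gamma$ for all $g\in G$, so $\Gamma$ is normal.

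For part (2), I would apply the orbit-stabilizer principle to the transitive action of $G$ on $\Irr(\C)-G$: the number of non-invertible simples equals $|G|/|\Gamma|=[G:\Gamma]$, and $g\otimes X_1\cong h\otimes X_1$ if and only if $h^{-1}g\in\Gamma$, i.e.\ $\overline{g}=\overline{h}$ in $G/\Gamma$. This makes the assignment $\overline{g}\mapsto X_{\overline{g}}:=g\otimes X_1$ a well-defined bijection between $G/\Gamma$ and $\Irr(\C)-G$, which together with $G\subseteq\Irr(\C)$ yields the claimed description of $\Irr(\C)$.

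For part (3), the rank is immediate by counting: $|G|+[G:\Gamma]=[G:\Gamma]|\Gamma|+[G:\Gamma]=[G:\Gamma](1+|\Gamma|)$. For the Frobenius--Perron dimension, I would use that all non-invertible simples have the same FP dimension, since $\FPdim(g\otimes X_i)=\FPdim(g)\FPdim(X_i)=\FPdim(X_i)$ by Lemma \ref{action_of_G(C)} and multiplicativity of FPdim. Writing $\FPdim(X)$ for this common value,
\begin{equation*}
\FPdim(\C)=\sum_{X\in\Irr(\C)}\FPdim(X)^{2}=|G|+[G:\Gamma]\FPdim(X)^{2}=[G:\Gamma]\bigl(\FPdim(X)^{2}+|\Gamma|\bigr).
\end{equation*}

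None of the steps is really an obstacle; the only subtle point to articulate carefully is the first one, namely that equation (\ref{decom1}) forces $G[X_i]=\Gamma$ directly from the shape of $X_1\otimes X_1^{*}$, since one must confirm that the invertible summands listed in (\ref{decom2}) are exactly the elements of the stabilizer (with multiplicity one), which is precisely what (\ref{decom1}) asserts.
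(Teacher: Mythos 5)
Your proposal is correct and follows essentially the same route as the paper's proof: part (1) via $G[g\otimes X_1]=\Gamma$ (from Lemma \ref{fusionrules3} together with equation (\ref{decom1})) combined with Lemma \ref{conjugate}, part (2) via the orbit--stabilizer identification $g\otimes X_1\cong h\otimes X_1 \Leftrightarrow h^{-1}g\in\Gamma$, and part (3) by counting. The paper merely states that (3) follows from (2); your explicit dimension count, using that all non-invertible simples share the same Frobenius--Perron dimension, is exactly the intended argument.
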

\begin{proof}
(1)\, By Lemma \ref{fusionrules3}, $G[g\otimes X_1]=G[X_1]=\Gamma$ for any $g\in G$. On the other hand, Lemma \ref{conjugate} shows that $G[g\otimes X_1]=gG[X_1]g^{-1}=g\Gamma g^{-1}$. Hence $\Gamma$ is normal in $G$.

(2)\, Let $X_{\overline{g}}=g\otimes X_1$ for every $\overline{g}\in G/\Gamma$. Since $\Gamma=G[X_1]$, we have $g\otimes X_1\cong h\otimes X_1$ if and only if $h^{-1}g\otimes X_1\cong X_1$ if and only if $h^{-1}g\in \Gamma$ if and only if $\overline{g}=\overline{h}$ in $G/\Gamma$. Hence the isomorphic class of $X_{\overline{g}}$ is well defined.

(3)\, Part (3) follows from Part (2).
\end{proof}

\begin{remark}\label{T_Y}
Let $\C$ be a generalized near-group fusion category of type $(G,\Gamma,k_1,\cdots,k_n)$.

(1)\, If $(k_1,\cdots,k_n)=(0,\cdots,0)$ then $X_i\otimes X_j$ is a direct sum of invertible simple objects by Lemma \ref{fusionrules3}. If this is the case then $\C$ is a generalized Tambara-Yamagami fusion category introduced in \cite{liptrap2010generalized}. In fact, it is easily observed that $\C$ is a generalized Tambara-Yamagami fusion category if and only if $(k_1,\cdots,k_n)=(0,\cdots,0)$.

(2)\, If $\C$ exactly has one non-invertible simple object, then $G=\Gamma$ and $\C$ is a near-group fusion category introduced in \cite{siehler2003near}.
\end{remark}

\begin{proposition}\label{subcategory}
Let $\C$ be a generalized near-group fusion category of type $(G,\Gamma,k_1,\cdots,k_n)$. Assume that $\D$ is a non-pointed fusion subcategory of $\C$. Then $\D$ is also a generalized near-group fusion category.
\end{proposition}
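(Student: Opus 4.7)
The plan is to show that the group $G(\D)$ of invertibles of $\D$ acts transitively on the set of non-invertible simples of $\D$. This is precisely the definition of $\D$ being a generalized near-group fusion category.

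First I would fix two arbitrary non-invertible simple objects $X, Y \in \Irr(\D)$. Since $\D \subseteq \C$, these are also non-invertible simple objects of $\C$. Because $\C$ is a generalized near-group fusion category, the group $G = G(\C)$ acts transitively on $\Irr(\C)\setminus G$, so there exists some $g \in G$ with $Y \cong g \otimes X$. The key step is then to show that $g$ actually lies in $\D$, so that $g \in G(\D)$.

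To see this, I would compute $Y \otimes X^*$ in two ways. On one hand, since $X, Y \in \D$ and $\D$ is closed under tensor products and duals, $Y \otimes X^* \in \D$, so every simple summand of $Y \otimes X^*$ lies in $\D$. On the other hand, using $Y \cong g \otimes X$ together with Lemma \ref{fusionrules3}(1), we have
\begin{equation*}
Y \otimes X^* \cong g \otimes (X \otimes X^*) \cong \bigoplus_{h \in \Gamma} gh \,\oplus\, \bigoplus_{i=1}^n k_i (g \otimes X_i),
\end{equation*}
so the invertible summands are exactly $g\Gamma$. Taking $h = e$ shows $g$ itself appears as a simple summand of $Y \otimes X^*$ (equivalently, $\dim\Hom(g, Y\otimes X^*) = \dim\Hom(g\otimes X, Y) = 1$). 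Therefore $g \in \Irr(\D)$, and since it is invertible, $g \in G(\D)$.

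Combining these, for any two non-invertible simples $X, Y \in \Irr(\D)$ there exists $g \in G(\D)$ with $Y \cong g \otimes X$, which shows $G(\D)$ acts transitively on $\Irr(\D) \setminus G(\D)$. (The hypothesis that $\D$ is non-pointed ensures this set is non-empty, so the statement is non-vacuous.) There is no real obstacle here: the one subtlety is simply recognizing that transitivity under $G(\C)$ automatically descends to transitivity under $G(\D)$, because the connecting invertible $g$ must itself belong to $\D$ by closure under tensor and duals.
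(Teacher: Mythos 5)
Your proof is correct and follows essentially the same route as the paper's: both arguments take $Y\cong g\otimes X$ for some $g\in G(\C)$ by transitivity in $\C$, observe via the adjunction $\dim\Hom(g,Y\otimes X^*)=\dim\Hom(g\otimes X,Y)=1$ that $g$ is a summand of $Y\otimes X^*\in\D$, and conclude $g\in G(\D)$. Your additional explicit decomposition of $Y\otimes X^*$ via Lemma \ref{fusionrules3} is a harmless elaboration of the same key step.
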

\begin{proof}
It suffices to prove that the group $G(\D)$ generated by invertible simple objects of $\D$ transitively acts on $\Irr(\D)-G(\D)$. Let $X_i$ and $X_j$ be non-invertible simple objects in $\D$. Then there exists $g\in G$ such that $X_j=g\otimes X_i$. From $\dim\Hom(X_j,g\otimes X_i)=\dim\Hom(g,X_j\otimes X_i^*)=1$, we know that $g$ is a summand of $X_j\otimes X_i^*$. On the other hand, $X_j\otimes X_i^*$ lies in $\D$ since $\D$ is a fusion subcategory of $\C$. Hence $g$ is an element of $G(\D)$. This proves that $G(\D)$ transitively acts on $\Irr(\D)-G(\D)$
\end{proof}

%

\begin{proposition}\label{categorytype}
Let $\C$ be a generalized near-group fusion category of type $(G,\Gamma,k_1,\cdots,k_n)$. Assume that $(k_1,\cdots,k_n)\neq (0,\cdots,0)$. Then

(1)\, The adjoint subcategory $\C_{ad}$ is non-pointed. There is a 1-1 correspondence between the non-pointed fusion subcategories of $\C$ and the subgroups of the universal grading group $\mathcal{U}(\C)$.

(2)\, Every component $\C_g$ of the universal grading at least contains one invertible simple object $\delta$. In particular, $\Irr(\C_g)=\{\delta\otimes X_{i_1},\cdots,\delta\otimes X_{i_s} \}$, where $\Irr(\C_{ad})=\{X_{i_1},\cdots,X_{i_s} \}$.
\end{proposition}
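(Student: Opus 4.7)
My plan is to treat the two parts in sequence, since part (2) leans on part (1). The strategy for part (1) has two ingredients: first, $\C_{ad}$ is itself non-pointed, and second, every non-pointed fusion subcategory $\D\subseteq\C$ contains $\C_{ad}$. Combining these with the standard fact (a consequence of the universality of the grading, cf.\ \cite[Corollary 3.7]{gelaki2008nilpotent}) that fusion subcategories of $\C$ containing $\C_{ad}$ are in bijection with subgroups $H\subseteq\mathcal{U}(\C)$ via $H\mapsto \bigoplus_{h\in H}\C_h$, the required 1-1 correspondence follows immediately.

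For the first ingredient I will use the hypothesis $(k_1,\dots,k_n)\neq(0,\dots,0)$: some $X_i$ appears with multiplicity $k_i\geq1$ in the decomposition (\ref{decom2}) of $X_1\otimes X_1^*$, so $X_i\in\C_{ad}$ by definition, and $\C_{ad}$ is non-pointed. For the second ingredient, pick any non-invertible simple object $Y\in\D$; then $Y\otimes Y^*\in\D$, and Lemma \ref{fusionrules3}(1) gives $Y\otimes Y^*=X_1\otimes X_1^*$. Since $\C_{ad}$ is generated, as a fusion subcategory, by the objects $X\otimes X^*$ for $X\in\Irr(\C)$---and these are either $\1$ (when $X$ is invertible) or $X_1\otimes X_1^*$ (when $X$ is non-invertible, again by Lemma \ref{fusionrules3}(1))---we conclude $\C_{ad}\subseteq\D$.

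For part (2) the plan is to show that the map $\phi: G\to \mathcal{U}(\C)$ sending $g\in G$ to its grading degree is surjective. By part (1) there is some non-invertible $X_{i_0}\in\C_{ad}=\C_e$; since $X_{i_0}=g_{i_0}\otimes X_1$ for some $g_{i_0}\in G$ (transitivity of the $G$-action), $X_1$ lies in $\C_{\phi(g_{i_0})^{-1}}$, whence every $X_j=g_j\otimes X_1$ lies in a component indexed by $\phi(G)$. Every invertible object also lies in a component indexed by $\phi(G)$ by definition, so $\C=\bigoplus_{h\in\phi(G)}\C_h$, forcing $\phi(G)=\mathcal{U}(\C)$ by faithfulness of the grading. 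Hence each component $\C_g$ contains an invertible simple object $\delta$. Finally, $\delta\otimes(-)\colon\C_e\to\C_g$ is an equivalence of abelian categories with quasi-inverse $\delta^{-1}\otimes(-)$, so it sends $\Irr(\C_{ad})=\{X_{i_1},\dots,X_{i_s}\}$ bijectively onto $\Irr(\C_g)$.

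No step looks genuinely difficult; the only subtle point is ensuring the direction ``non-pointed subcategory $\supseteq\C_{ad}$'' in part (1), which is exactly where the generalized near-group hypothesis (via Lemma \ref{fusionrules3}(1)) gets used in an essential way. The surjectivity of $\phi$ in part (2) is then a short consequence once part (1) is in hand, and the description of $\Irr(\C_g)$ is a formality about tensoring with an invertible object.
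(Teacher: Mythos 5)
Your proof is correct. Part (1) is essentially the paper's own argument: non-pointedness of $\C_{ad}$ from $(k_1,\cdots,k_n)\neq 0$, and $\C_{ad}\subseteq\D$ via $Y\otimes Y^*=X_1\otimes X_1^*$ (Lemma \ref{fusionrules3}(1)), followed by the standard bijection with subgroups of $\U(\C)$ --- though the correct reference for that bijection is \cite[Corollary 2.5]{drinfeld2010braided}, which is what the paper cites; \cite[Corollary 3.7]{gelaki2008nilpotent} only says that the trivial component of any grading contains $\C_{ad}$, so your ``cf.'' is slightly off, but your sketch $H\mapsto\bigoplus_{h\in H}\C_h$ is the standard proof and the gap is purely bibliographic. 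Part (2) is where you genuinely diverge. The paper picks a non-invertible simple $X\in\C_g$ and a non-invertible $Y\in\C_{ad}$, notes $X\otimes Y\in\C_g$, and reads off from Lemma \ref{fusionrules3}(2) that this product contains the $|\Gamma|$ invertible objects $g_{ij}h$, $h\in\Gamma$, so $\C_g$ contains an invertible; it then identifies $\Irr(\C_g)$ with $\{\delta\otimes X_{i_1},\cdots,\delta\otimes X_{i_s}\}$ by a Frobenius--Perron dimension count using $\FPdim(\C_g)=\FPdim(\C_{ad})$. You instead prove surjectivity of the degree homomorphism $\phi\colon G\to\U(\C)$: the existence of a non-invertible simple in $\C_e$ plus transitivity pins down the degree of $X_1$ to lie in $\phi(G)$, so all simples live in components indexed by $\phi(G)$, and faithfulness forces $\phi(G)=\U(\C)$; and you replace the dimension count by the observation that $\delta\otimes(-)\colon\C_e\to\C_g$ is an equivalence with quasi-inverse $\delta^*\otimes(-)$, hence bijective on isomorphism classes of simples. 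Both routes are short and valid; the paper's extracts the invertibles directly from the fusion rules, while yours makes the crossed-product structure (cf.\ Remark \ref{remark1}) more transparent and avoids any appeal to Frobenius--Perron dimensions, at the cost of needing the group-theoretic bookkeeping with $\phi$.
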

\begin{proof}
(1)\, For every non-invertible simple object $X\in \C$, Lemma \ref{fusionrules3} shows that
\begin{equation}
\begin{split}
X\otimes X^*=\bigoplus_{h\in \Gamma}h\oplus k_1X_{1}\oplus \cdots\oplus k_nX_{n}.
\end{split}\nonumber
\end{equation}
Hence the adjoint subcategory $\C_{ad}$ is generated by $\Gamma$ and $X_i$'s with $k_i\neq 0$. Let $\D$ be a non-pointed fusion subcategory of $\C$ and let $Y\in \D$ be a non-invertible simple object. Then $X\otimes X^*=Y\otimes Y^*$ by Lemma \ref{fusionrules3}. Since $\D$ is a fusion subcategory, we get that $Y\otimes Y^*$ and hence $\Gamma$ and $X_i$'s with $k_i\neq 0$ are contained in $\D$. So $\C_{ad}$ is a fusion subcategory of $\D$. This shows that every non-pointed fusion subcategory of $\C$ contains $\C_{ad}$. Therefore, part (1) follows from \cite[Corollary 2.5]{drinfeld2010braided}.

(2)\, By assumption, $\C_{ad}$ contains a non-invertible simple object $Y$. Let $X$ be a simple object in $\C_g$. We may assume that $X$ is not invertible. Then $X\otimes Y\in \C_g\otimes \C_{ad}\subseteq \C_g$. By Lemma \ref{fusionrules3}(2), $X\otimes Y$ contains $|\Gamma|$ invertible simple objects. Hence $\C_g$ at least contains an invertible simple object.

Let $\delta\in\C_g$ be an invertible simple object, and $X_{i_1},\cdots,X_{i_s}$ be all non-isomorphic simple objects in $\C_{ad}$. Then $\delta\otimes X_{i_1},\cdots,\delta\otimes X_{i_s}$ are non-isomorphic simple objects in $\C_g$. Since
$$\FPdim(\delta\otimes X_{i_j})=\FPdim(\delta)\FPdim(X_{i_j})=\FPdim(X_{i_j}),$$
a dimension counting of $\C_g$ and $\C_{ad}$ shows that $\delta\otimes X_{i_1},\cdots,\delta\otimes X_{i_s}$ are all non-isomorphic simple objects in $\C_g$. This completes the proof.
\end{proof}

\begin{remark}\label{remark1}
Galindo \cite{GALINDO2011233} introduced the notion of a crossed product tensor category  and gave a description of this class of tensor categories, graded monoidal functors, monoidal natural transformations, and braidings in terms of coherent outer $G$-actions over tensor categories. By definition, a graded tensor category over a group $G$ is called a crossed product tensor category if every homogeneous component contains at least one invertible object. Proposition \ref{categorytype} shows that a generalized near-group fusion category of type $(G,\Gamma,k_1,\cdots,k_n)$ is a crossed product fusion category if $(k_1,\cdots,k_n)\neq (0,\cdots,0)$.
\end{remark}

\begin{theorem}\label{structure}
Let $\C$ be a generalized near-group fusion category of type $(G,\Gamma,k_1,\cdots,k_n)$. Then one of the following holds.

(1)\, If $(k_1,\cdots,k_n)=(0,\cdots,0)$ then $\C$ is a generalized Tambara-Yamagami fusion category.

(2)\, If $(k_1,\cdots,k_n)\neq (0,\cdots,0)$ then $\C$ is an extension of a smaller generalized near-group fusion category with trivial universal grading.
\end{theorem}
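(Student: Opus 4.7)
Part (1) is essentially definitional: by Remark \ref{T_Y}(1), the hypothesis $(k_1,\ldots,k_n)=(0,\ldots,0)$ is precisely equivalent to $X_i\otimes X_j$ being a sum of invertibles for every pair of non-invertible simple objects, which is the defining property of a generalized Tambara-Yamagami fusion category. So I would dispense with (1) in a single sentence.

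The content is in Part (2). My plan is to show that, under the hypothesis $(k_1,\ldots,k_n)\neq(0,\ldots,0)$, the adjoint subcategory $\C_{ad}$ is itself a generalized near-group fusion category and has trivial universal grading; since $\C$ is always a $\mathcal{U}(\C)$-extension of $\C_{ad}$ via the universal grading (Subsection \ref{sec2.2}), this gives the desired conclusion. The chain of implications I would use is: by Proposition \ref{categorytype}(1), $\C_{ad}$ is non-pointed, and the non-pointed fusion subcategories of $\C$ are in bijection with subgroups of $\mathcal{U}(\C)$; in particular $\C_{ad}$ is the \emph{smallest} non-pointed fusion subcategory of $\C$. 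By Proposition \ref{subcategory}, $\C_{ad}$ is then itself a generalized near-group fusion category.

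The crux is then to identify the fusion-type data of $\C_{ad}$. Pick any non-invertible simple object $X\in\C_{ad}$ (which exists by Proposition \ref{categorytype}(1)). By Lemma \ref{fusionrules3}(1),
\begin{equation*}
X\otimes X^{*}=\bigoplus_{h\in\Gamma}h\oplus k_{1}X_{1}\oplus\cdots\oplus k_{n}X_{n},
\end{equation*}
which is the same decomposition as for $X_1$. Since $(k_1,\ldots,k_n)\neq(0,\ldots,0)$, at least one non-invertible $X_i$ occurs, so every such $X_i$ actually lies in $\C_{ad}$ and hence in $(\C_{ad})_{ad}$. Applying Proposition \ref{categorytype}(1) now to $\C_{ad}$ (which is itself a generalized near-group fusion category with the \emph{same} tuple $(k_1,\ldots,k_n)$), we conclude that $(\C_{ad})_{ad}$ is a non-pointed fusion subcategory of $\C_{ad}\subseteq\C$. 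But $\C_{ad}$ was the smallest non-pointed fusion subcategory of $\C$, so the inclusion $(\C_{ad})_{ad}\subseteq\C_{ad}$ must be an equality. By the definition of the universal grading, this forces $\mathcal{U}(\C_{ad})$ to be trivial, and Part (2) follows from the faithful grading $\C=\bigoplus_{g\in\mathcal{U}(\C)}\C_g$ with $\C_e=\C_{ad}$.

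I do not expect any serious obstacle: every ingredient is already available in the excerpt (Lemma \ref{fusionrules3}, Propositions \ref{subcategory} and \ref{categorytype}, and the discussion of the universal grading in Subsection \ref{sec2.2}). The only point to be careful about is that one really needs the \emph{minimality} of $\C_{ad}$ among non-pointed fusion subcategories, rather than just that $\C_{ad}$ is non-pointed; that minimality is the reason the argument terminates after one iteration and does not require transfinite descent.
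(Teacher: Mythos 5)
Your proposal is correct and follows essentially the same route as the paper's proof: both identify $\C_{ad}$ as the smallest non-pointed fusion subcategory via Proposition \ref{categorytype}(1), use the decomposition of $X\otimes X^*$ from Lemma \ref{fusionrules3} to see that $(\C_{ad})_{ad}$ is non-pointed, and conclude $(\C_{ad})_{ad}=\C_{ad}$, hence $\mathcal{U}(\C_{ad})$ is trivial. Your explicit appeal to Proposition \ref{subcategory} to certify that $\C_{ad}$ is itself a generalized near-group category is a point the paper leaves implicit, and is a welcome addition rather than a deviation.
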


\begin{proof}
The first part is clear, it suffices to prove the second part.

If $(k_1,\cdots,k_n)\neq (0,\cdots,0)$ then the adjoint subcategory $\C_{ad}$ is not pointed and hence it is the smallest non-pointed fusion subcategory of $\C$ by Proposition \ref{categorytype}(1). This is because that $\C_{ad}$ corresponds to the trivial subgroup of $\mathcal{U}(\C)$. Let $X\in \C_{ad}$ be a non-invertible simple object. Then Lemma \ref{fusionrules3} shows the decomposition of $X\otimes X^*$ contains non-invertible simple objects. Hence $(\C_{ad})_{ad}$ is not pointed. But we have shown that $\C_{ad}$ is the smallest non-pointed fusion subcategory of $\C$. Hence $\C_{ad}=(\C_{ad})_{ad}$ and the universal grading group $\mathcal{U}(\C_{ad})$ of $\C_{ad}$ is trivial.
\end{proof}

Theorem \ref{structure} shows that a full classification of generalized near-group fusion categories is reduced to classifying such fusion categories with trivial universal grading. We recall two examples of generalized near-group fusion categories with trivial universal grading.

\begin{example}
 (Near-group fusion categories \cite{siehler2003near}) Let $G$ be a finite group and $\kappa$ be a nonnegative integer. A near-group fusion category of type $(G,\kappa)$ is a fusion category $\C$ whose isomorphism classes of simple objects are given by $G$ and a non-invertible object $X $, satisfying
 $$g\otimes h=gh,\qquad X\otimes X=\bigoplus_{g\in G}g\oplus \kappa X,\quad \forall g,h\in G.$$

A near-group fusion category $\C$ of type $(G,\kappa)$ with $\kappa>0$ admits no faithful grading. A near-group fusion category $\C$ of type $(G,0)$ is a Tambara-Yamagami category. In this case, $\C$ admits a faithful $\mathbb{Z}_2$-grading.
\end{example}

\begin{example}
 (Super-modular categories $PSU(2)_6$ \cite{Bruillard2017Fermionic}) Super-modular categories $PSU(2)_6$ are the adjoint subcategories of the modular categories $SU(2)_6$. They are non-split; that is, they can not be decomposed into a Deligne's tensor product of a modular category and $\svect$.

We denote the simple objects of a super-modular categories $PSU(2)_6$ by $\1,\delta, X, Y$. Its fusion rules are listed below.
\begin{center}
\begin{tabular}{ccc}
$\delta\otimes \delta=\1$,\quad $\delta\otimes X=Y$,\quad $\delta\otimes Y=X$,\\
$X\otimes X=Y\otimes Y=\1\oplus X\oplus Y$, $X\otimes Y=Y\otimes X=\delta\oplus X\oplus Y$.
\end{tabular}
\end{center}
It is clear that it is a generalized near-group fusion categories with trivial universal grading.
\end{example}

\begin{remark}\label{exten-GNG}
We notice that the opposite direction of Theorem \ref{structure}(2) does not always hold. That is, an extension of a generalized near-group fusion category is not necessary a generalized near-group fusion category. By \cite[Section III.G]{Bruillard2017Fermionic}, there exists a rank $7$ modular category $\C=\C_0\oplus C_1$ whose trivial component $\C_0$ is a super-modular categories $PSU(2)_6$. Since $PSU(2)_6$ has rank $4$, $\C_1$ has rank $3$. Hence $\C$ is not a generalized near-group fusion category by Proposition \ref{categorytype}(2).
\end{remark}

\begin{lemma}\label{centralizer2}
Let $\C$ be a braided fusion category such that the universal grading group $\mathcal{U}(\C)$ is trivial.  Then we have

(1)\, $(\C_{pt})'=\C$.

(2)\, $\C_{pt}\subseteq \mathcal{Z}_2(\C)$.

(3)\, $\C$ contains $\svect$ if and only if $\mathcal{Z}_2(\C)$ contains $\svect$.
\end{lemma}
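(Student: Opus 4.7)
The plan is to bootstrap from the key structural observation that, under the hypothesis $\mathcal{U}(\C)=1$, the adjoint subcategory saturates to all of $\C$. Once part (1) is proved, parts (2) and (3) follow by taking M\"uger centralizers and invoking Lemma \ref{double_centralzer}.

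For part (1), I would first recall, as in the proof of Lemma \ref{centralizer_C_ad}, that \cite[Proposition 3.25]{drinfeld2010braided} gives $(\C_{ad})'=(\mathcal{Z}_2(\C))^{co}$, and that $\C_{pt}$ is always contained in this commutator because every invertible simple $g$ satisfies $g\otimes g^*\cong\1\in\mathcal{Z}_2(\C)$. Hence $\C_{pt}\subseteq(\C_{ad})'$ in any braided fusion category. Taking centralizers of this containment yields $(\C_{pt})'\supseteq(\C_{ad})''\supseteq\C_{ad}$. Now, by definition of the universal grading $\C=\bigoplus_{g\in\mathcal{U}(\C)}\C_g$ with $\C_e=\C_{ad}$, the hypothesis $\mathcal{U}(\C)=1$ forces $\C=\C_{ad}$. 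Combining gives $(\C_{pt})'=\C$, which is (1).

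For part (2), take M\"uger centralizers of (1) and apply Lemma \ref{double_centralzer}:
\begin{equation*}
\mathcal{Z}_2(\C)=\C'=(\C_{pt})''=\C_{pt}\vee\mathcal{Z}_2(\C),
\end{equation*}
which forces $\C_{pt}\subseteq\mathcal{Z}_2(\C)$. For part (3), the implication $\mathcal{Z}_2(\C)\supseteq\svect\Rightarrow\C\supseteq\svect$ is immediate since $\mathcal{Z}_2(\C)\subseteq\C$. Conversely, if $\C$ contains $\svect$ as a braided subcategory, then $\C$ contains an invertible object $\delta$ with $\delta\otimes\delta\cong\1$ and $c_{\delta,\delta}=-\id$; in particular $\delta\in\C_{pt}$, so by (2) $\delta\in\mathcal{Z}_2(\C)$. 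The braided subcategory generated by $\delta$ inside $\mathcal{Z}_2(\C)$, with the inherited braiding, is exactly $\svect$.

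The main obstacle is the conceptual step rather than any calculation: once one recognizes that $\mathcal{U}(\C)=1$ literally means $\C_{ad}=\C$, the argument reduces to routine centralizer manipulations. The only subtle point to be careful about is invoking the correct form of the double-centralizer identity from Lemma \ref{double_centralzer} in part (2), where the $\mathcal{Z}_2(\C)$-summand appears precisely because we are not assuming $\C$ is non-degenerate.
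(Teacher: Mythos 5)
Your proposal is correct and follows essentially the same route as the paper: trivial $\mathcal{U}(\C)$ gives $\C_{ad}=\C$, part (1) follows from the containment $\C_{ad}\subseteq(\C_{pt})'$, part (2) from the double-centralizer identity $(\C_{pt})''=\C_{pt}\vee\mathcal{Z}_2(\C)$, and part (3) from $\svect\subseteq\C_{pt}$ together with (2). The only (harmless) difference is that where the paper cites \cite[Proposition 2.1]{DNS2019} for $\C_{ad}\subseteq(\C_{pt})'$, you rederive it from $(\C_{ad})'=(\mathcal{Z}_2(\C))^{co}\supseteq\C_{pt}$ -- the same fact the paper itself uses in the proof of Lemma \ref{centralizer_C_ad}.
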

\begin{proof}
(1)\, Since the universal grading group is trivial, we have $\C_{ad}=\C$. By \cite[Proposition 2.1]{DNS2019}, $\C_{ad}\subseteq(\C_{pt})'$. Hence $\C\subseteq (\C_{pt})'$. On the other hand, $(\C_{pt})'$ is a fusion subcategory of $\C$. So we have $(\C_{pt})'=\C$.

(2)\, By \cite[Corollary 3.11]{drinfeld2010braided}, we have $(\C_{pt})''=\mathcal{Z}_2(\C)=\C_{pt}\vee \mathcal{Z}_2(\C)$, which shows that $\C_{pt}\subseteq \mathcal{Z}_2(\C)$.

(3)\, Suppose first that $\svect\subseteq \C$. Since $\C_{pt}$ is the largest pointed fusion subcategory of $\C$ and $\svect$ is pointed, we have $\svect\subseteq \C_{pt}$.  By part (2), $\svect$ is contained in $\mathcal{Z}_2(\C)$. The other direction is obvious.
\end{proof}

In the proposition below, we give a description of M\"{u}ger center of a braided generalized near-group fusion category with trivial universal grading.
\begin{proposition}\label{Trivial1}
Let $\C$ be a braided generalized near-group fusion category of type $(G,\Gamma,k_1,\cdots,k_n)$. Assume that $\C$ is not symmetric and the universal grading group $\U(\C)$ is trivial. Then $\mathcal{Z}_2(\C)=\C_{pt}$.
\end{proposition}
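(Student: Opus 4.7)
The plan is to combine the two inclusions. The easy direction is $\C_{pt}\subseteq \mathcal{Z}_2(\C)$, which comes for free from Lemma \ref{centralizer2}(2) since $\U(\C)$ is trivial. The work is to establish $\mathcal{Z}_2(\C)\subseteq \C_{pt}$, which I would do by showing $\mathcal{Z}_2(\C)$ is pointed.

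First I would rule out the generalized Tambara-Yamagami case, i.e., rule out $(k_1,\dots,k_n)=(0,\dots,0)$. Indeed, if this tuple were zero, then by Lemma \ref{fusionrules3}(1) we would have $X\otimes X^*\in \C_{pt}$ for every non-invertible simple $X$, whence $\C_{ad}\subseteq \C_{pt}$. But the hypothesis $\U(\C)=\{e\}$ forces $\C_{ad}=\C$, so $\C=\C_{pt}$, contradicting our standing assumption (from the opening of Section \ref{sec3}) that $\C$ is not pointed. Hence $(k_1,\dots,k_n)\neq(0,\dots,0)$, and Proposition \ref{categorytype}(1) applies: every non-pointed fusion subcategory of $\C$ contains $\C_{ad}$.

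Now I would suppose for contradiction that $\mathcal{Z}_2(\C)$ is not pointed. Since $\mathcal{Z}_2(\C)$ is a fusion subcategory of $\C$, Proposition \ref{subcategory} ensures it is itself a generalized near-group fusion category, and in particular it is a non-pointed fusion subcategory of $\C$. By the previous paragraph, $\C_{ad}\subseteq \mathcal{Z}_2(\C)$. Using $\C_{ad}=\C$ once more, this gives $\C\subseteq \mathcal{Z}_2(\C)$ and therefore $\mathcal{Z}_2(\C)=\C$, meaning $\C$ is symmetric. This contradicts the hypothesis that $\C$ is not symmetric, so $\mathcal{Z}_2(\C)$ must be pointed, i.e., $\mathcal{Z}_2(\C)\subseteq \C_{pt}$. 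Combining this with the reverse inclusion from Lemma \ref{centralizer2}(2) yields $\mathcal{Z}_2(\C)=\C_{pt}$.

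The argument is essentially a bookkeeping combination of Proposition \ref{subcategory}, Proposition \ref{categorytype}(1) and Lemma \ref{centralizer2}(2); the only point that requires care is the preliminary reduction eliminating the Tambara-Yamagami case, since Proposition \ref{categorytype} is only stated under $(k_1,\dots,k_n)\neq (0,\dots,0)$. That reduction is what the non-pointed/trivial-grading hypotheses are doing in the background, and it is the one place where I expect a careful reader to want an explicit remark.
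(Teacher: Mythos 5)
Your proof is correct and follows essentially the same route as the paper's: both arguments obtain $\C_{pt}\subseteq\mathcal{Z}_2(\C)$ from Lemma \ref{centralizer2}(2) and derive $\mathcal{Z}_2(\C)\subseteq\C_{pt}$ from the fact, coming from Proposition \ref{categorytype}(1) together with the triviality of $\U(\C)$, that $\C$ has no proper non-pointed fusion subcategory, so that a non-invertible simple object in the M\"uger center would force $\mathcal{Z}_2(\C)=\C$, contradicting non-symmetry. Your explicit preliminary elimination of the case $(k_1,\cdots,k_n)=(0,\cdots,0)$ is a point the paper leaves implicit (its proof invokes the no-proper-non-pointed-subcategory fact, which is only established under $(k_1,\cdots,k_n)\neq(0,\cdots,0)$, relying silently on the standing non-pointedness assumption), so this extra care improves on, rather than deviates from, the paper's argument.
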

\begin{proof}
Let $X$ be a non-invertible simple object of $\C$, and let $\C\langle X\rangle$ be the fusion subcategory generated by $X$. By Remark \ref{remark1}(1), $\C$ does not contain proper non-pointed fusion categories. Hence $\C=\C\langle X\rangle$ since $\C\langle X\rangle$ is not pointed. In other words, every non-invertible simple object can generate $\C$.

Since we have assumed that $\C$ is not symmetric, the argument above implies that the M\"{u}ger center $\mathcal{Z}_2(\C)$ can not contain non-invertible simple objects, hence it is a fusion subcategory of $\C_{pt}$. On the other hand, Lemma \ref{centralizer2} shows that $\C_{pt}$ is a fusion subcategory of $\mathcal{Z}_2(\C)$ since $\U(\C)$ is trivial. Hence $\mathcal{Z}_2(\C)=\C_{pt}$.
\end{proof}

\section{Extensions of a Fibonacci category}\label{sec4}
In this section we will present one example of generalized near-group fusion categories which is an extension of a rank $2$ fusion category.

\medbreak
Ostrik classified rank $2$ fusion categories in \cite{ostrik2003fusion}. Let $\C$ be a rank $2$ fusion category with $\Irr(\C)=\{\1,X\}$. The possible fusion rules for $\C$ are:

$${\rm (1)}\ X\otimes X\cong \1; \quad  {\rm (2)}\ X\otimes X\cong \1\oplus X.$$

If the first possibility holds true then $\C$ is pointed, and hence equivalent to $\vect_{\mathbb{Z}_2}^{\omega}$ for some $\omega\in H^{3}({\mathbb{Z}_2},k^{*})={\mathbb{Z}_2}$. Hence there are two such categories.

If the second possibility holds true then the fusion rules of $\C$ are called the Yang-Lee rules. If this is the case, we call $\C$ a Fibonacci category. There are two such categories. They are both non-degenerate braided fusion categories.

\begin{lemma}\label{equations}
Let $a,b,c,d,e\geq 3$ be unknowns. Then

(1)\, Equation $$\cos^2\frac{\pi}{a}+\cos^2\frac{\pi}{b}=\frac{5+\sqrt{5}}{8}$$ has unique integral solutions $a=3,b=5$.

(2)\, Equation $$\cos^2\frac{\pi}{c}+\cos^2\frac{\pi}{d}+\cos^2\frac{\pi}{e}=\frac{5+\sqrt{5}}{8}$$ has no integral solutions.
\end{lemma}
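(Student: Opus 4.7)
The plan is to treat both parts as elementary case analyses, leveraging the fact that $f(n):=\cos^2(\pi/n)$ is strictly increasing for $n\ge 3$ (since $\pi/n\in(0,\pi/2]$ is decreasing in $n$ and $\cos$ is decreasing on that interval with values in $[0,1)$), with the limit $\lim_{n\to\infty}f(n)=1$. The first thing I would record is the short table $f(3)=1/4$, $f(4)=1/2$, $f(5)=(3+\sqrt 5)/8$, $f(6)=3/4$, and verify by direct substitution that $(a,b)=(3,5)$ satisfies $f(3)+f(5)=(2+3+\sqrt 5)/8=(5+\sqrt 5)/8$.

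For part (1), after assuming without loss of generality that $a\le b$, the inequality $2f(a)\le f(a)+f(b)=(5+\sqrt 5)/8<3/2=2f(6)$ forces $a\le 5$, so only three subcases remain. The case $a=3$ yields $f(b)=(3+\sqrt 5)/8=f(5)$, hence $b=5$ by monotonicity. The case $a=4$ requires $f(b)=(1+\sqrt 5)/8<1/2=f(4)\le f(b)$, a contradiction. The case $a=5$ requires $f(b)=1/4=f(3)$, contradicting $b\ge a=5$.

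For part (2), again assume $c\le d\le e$. From $3f(c)\le(5+\sqrt 5)/8$ one gets $f(c)\le(5+\sqrt 5)/24<1/2=f(4)$, so $c=3$. Substituting forces $f(d)+f(e)=(3+\sqrt 5)/8$, whence $2f(d)\le(3+\sqrt 5)/8<1=2f(4)$, so $d=3$ as well. The remaining equation is $f(e)=(1+\sqrt 5)/8$. But $(1+\sqrt 5)/8$ lies strictly between $f(3)=2/8$ and $f(4)=4/8$ (since $2<1+\sqrt 5<4$), so by strict monotonicity of $f$ no integer $e\ge 3$ realises this value. Hence (2) admits no solutions.

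The only substantive step -- the same in both parts -- is ruling out $(1+\sqrt 5)/8$ as a value of $f$, and this is immediate from monotonicity once one observes the tight pinch $f(3)<(1+\sqrt 5)/8<f(4)$. Everything else is numerical bookkeeping with the first four values of $f(n)$.
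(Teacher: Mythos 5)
Your proof is correct and takes essentially the same approach as the paper: the paper's proof simply observes that $f(x)=\cos^2\frac{\pi}{x}$ is increasing for $x\geq 3$ with $f(10)=\frac{5+\sqrt{5}}{8}$, and leaves the resulting finite examination over $3\leq x\leq 10$ to the reader. Your WLOG ordering and averaging bounds merely carry out that same monotonicity-based case check explicitly (and somewhat more efficiently, pinching $\frac{1+\sqrt{5}}{8}$ strictly between $f(3)$ and $f(4)$), so nothing needs to be added.
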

\begin{proof}
The function $f(x)=\cos^2\frac{\pi}{x}$ $(x\geq 3)$ is an increasing function on $x$, and $f(10)=\frac{5+\sqrt{5}}{8}$. An easy examination from $x=3$ to $x=10$ proves the lemma.
\end{proof}

\medbreak
A fusion category is said of category type $(d_0,n_0; d_1,n_1;\cdots;d_s,n_s)$ if $n_i$ is the number of the non-isomorphic simple objects of Frobenius-Perron dimension $d_i$, for all $0\leq i\leq s$, where $1 = d_0 < d_1< \cdots < d_s$ are positive real numbers, and $n_0,n_1,\cdots,n_s$ are positive integers. 

\begin{theorem}\label{Exten_YL_Cat}
Let $\C=\oplus_{g\in G}\C_g$ be an extension of a Fibonacci category $\F$. Then

(1)\, $\C$ is of type $(1,n;\frac{1+\sqrt5}{2},n)$, where $n=|G(\C)|$.

(2)\, For every $g\in G$, $\rk(\C_g)=2$. Write $\Irr(\C_g)=\{\delta_g,Y_g\}$. Then $\FPdim(\delta_g)=1$ and $\FPdim(Y_g)=\frac{1+\sqrt5}{2}$.

(3)\, $\F=\C_{ad}$, $G=\U(\C)$ and the order of $\mathcal{U}(\C)$ is $n$.
\end{theorem}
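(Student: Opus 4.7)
The plan is to pin down the structure of each graded component $\C_g$ using only the dimension constraints coming from the grading. Writing $\phi=\tfrac{1+\sqrt{5}}{2}$ for the Frobenius--Perron dimension of the unique non-invertible simple $X$ of $\F$, one has $\FPdim(\F)=1+\phi^{2}=2+\phi$, and \eqref{FPdimgrading} gives $\FPdim(\C_g)=2+\phi$ for every $g\in G$.

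The key step I would make is the following: for any simple $Y\in\C_g$, the object $Y\otimes Y^{*}$ lies in $\C_g\otimes\C_{g^{-1}}\subseteq\C_e=\F$, so it has the form $\1\oplus b_{Y}X$ for some integer $b_{Y}\geq 0$ (the coefficient of $\1$ is $1$ since $Y$ is simple). Therefore $\FPdim(Y)^{2}=1+b_{Y}\phi$. Summing over $\Irr(\C_g)$ and comparing with $\FPdim(\C_g)=2+\phi$ yields
\begin{equation*}
\rk(\C_g)+\Big(\sum_{Y\in\Irr(\C_g)}b_{Y}\Big)\phi \;=\; 2+\phi,
\end{equation*}
and the $\mathbb{Q}$-linear independence of $\{1,\phi\}$ forces $\rk(\C_g)=2$ and $\sum_{Y}b_{Y}=1$. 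Hence $\C_g$ consists of exactly one invertible simple $\delta_g$ (with $b=0$) and one simple $Y_g$ of dimension $\sqrt{1+\phi}=\phi$ (with $b=1$), which proves (2). Summing over $g\in G$ then gives $|G(\C)|=|G|=:n$ and $\rk(\C)=2n$, so (1) follows.

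For (3), the inclusion $\C_{ad}\subseteq\C_e=\F$ is a standard property of the universal grading. Conversely, for any $g\in G$ the decomposition $Y_{g}\otimes Y_{g}^{*}=\1\oplus X$ shows $X\in\C_{ad}$; since $\F$ is generated as a fusion category by $X$, we obtain $\F\subseteq\C_{ad}$, and hence $\F=\C_{ad}$. The given $G$-grading is therefore a faithful grading whose trivial component equals $\C_{ad}$, and the universal property of the universal grading yields a canonical isomorphism $G\cong\U(\C)$, with $|\U(\C)|=|G|=n$.

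The only genuinely substantive step is the $\mathbb{Q}$-linear independence deduction that simultaneously forces the rank and the dimensions in each component; once that rigidity is in hand, the count in (1), the identification of $\C_{ad}$ in (3), and the universal-grading isomorphism are routine bookkeeping.
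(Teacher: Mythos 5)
Your proof is correct, but it takes a genuinely different route from the paper's at the key step, part (2). The paper first bounds $\rk(\C_g)\leq 3$ from $\FPdim(\C_g)=\frac{5+\sqrt5}{2}$, then runs a case analysis on ranks $1$, $2$, $3$: rank $1$ is excluded by an integrality contradiction, and ranks $2$ and $3$ are handled by invoking \cite[Remark 8.4]{etingof2005fusion} (every Frobenius--Perron dimension less than $2$ has the form $2\cos\frac{\pi}{m}$) together with a dedicated trigonometric lemma (Lemma \ref{equations}) showing $\cos^2\frac{\pi}{a}+\cos^2\frac{\pi}{b}=\frac{5+\sqrt5}{8}$ forces $a=3,b=5$ and the three-term analogue has no solutions. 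You bypass all of this: since $Y\otimes Y^{*}\in\C_e=\F$ for any simple $Y\in\C_g$, its squared dimension lies in $1+\mathbb{Z}_{\geq 0}\phi$, and summing over $\Irr(\C_g)$ against $\FPdim(\C_g)=2+\phi$ plus the $\mathbb{Q}$-linear independence of $\{1,\phi\}$ pins down $\rk(\C_g)=2$ and both dimensions simultaneously, with no a priori rank bound and no input from the classification of small quantum dimensions. Your argument is more elementary and self-contained, and it adapts to any trivial component whose simple dimensions are $\mathbb{Q}$-linearly independent together with $1$; the paper's method trades that for a reusable numerical lemma in the style of dimension-bounding arguments. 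For part (3) you also differ mildly: you show $\F\subseteq\C_{ad}$ directly from $Y_g\otimes Y_g^{*}=\1\oplus X$ and then identify $G\cong\U(\C)$ via the universal property, whereas the paper argues that $\C_{ad}\subseteq\F$ has no proper non-trivial fusion subcategories and $\C_{ad}\neq\vect$; both are sound, and your universal-property justification of $G=\U(\C)$ is in fact slightly more explicit than the paper's one-line assertion.
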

\begin{proof}
Since  $\F=\C_e$ and the grading is faithful, we have $\FPdim(\C_g)=\FPdim(\F)=\frac{5+\sqrt5}{2}$, for all $g\in G$. This implies that $\rk(\C_g)\leq 3$ for all $g\in G$. Set $\Irr(\F)=\{1,Y\}$ with $\FPdim(Y)=\frac{1+\sqrt5}{2}$ .

\medbreak
If there exists $e\neq g\in G$ such that $\rk(\C_g)=1$ then we set $\Irr(\C_g)=\{Y_g\}$. Then $Y\otimes Y_g \in \C_e\otimes \C_g\subseteq \C_g$. Since $Y_g$ is the unique (non-isomorphic) simple object in $\C_g$, we get that $Y\otimes Y_g\cong \FPdim(Y)Y_g$, which implies that $\FPdim(Y)$ is integral, a contradiction.

\medbreak
If there exists $e\neq g\in G$ such that $\rk(\C_g)=2$ then we set $\Irr(\C_g)=\{\delta_g,Y_g\}$. We may reorder $\delta_g,Y_g$ such that $\FPdim(\delta_g)\leq\FPdim(Y_g)$. Since $\FPdim(\F)= \FPdim(\C_g)$, we have $\FPdim(\delta_g)^2+\FPdim(Y_g)^2=\frac{5+\sqrt5}{2}$, which implies that $\FPdim(\delta_g)<2$ and $\FPdim(Y_g)<2$. By \cite[Remark 8.4]{etingof2005fusion}, there exist integers $a,b\geq 3$ such that $\FPdim(\delta_g)=2\cos\frac{\pi}{a}$ and $\FPdim(Y_g)=2\cos\frac{\pi}{b}$. Hence we have equation  $4\cos^2\frac{\pi}{a}+4\cos^2\frac{\pi}{b}=\frac{5+\sqrt{5}}{2}$. Lemma \ref{equations} shows that $a=3$ and $b=5$. So $\FPdim(\delta_g)=1$ and $\FPdim(Y_g)=\frac{1+\sqrt{5}}{2}$.

\medbreak
If there exists $e\neq g\in G$ such that $\rk(\C_g)=3$ then we set $\Irr(\C_g)=\{X_g,Y_g,Z_g\}$. Similarly, we have an equation ($c,d,e\geq 3$ are integers):

$$4\cos^2\frac{\pi}{c}+4\cos^2\frac{\pi}{d}+4\cos^2\frac{\pi}{e}=\frac{5+\sqrt{5}}{2}.$$

Lemma \ref{equations} shows that this is impossible. Therefore, every component $\C_g$ is of rank $2$ and $\C$ is of type $(1,n;\frac{1+\sqrt5}{2},n)$, where $n=|G(\C)|$. This proves part (1) and part (2).

Since $\C_{ad}$ is a fusion subcategory of $\F$ (see Subsection \ref{sec2.1}) and $\F$ does not have proper fusion subcategory, we have $\C_{ad}=\vect$ or $\F$. It is clear that $\C_{ad}$ can not be the trivial fusion category $\vect$, otherwise $\C$ is pointed. Hence $\C_{ad}=\F$ and $G=\mathcal{U}(\C)$ has order $n$.
\end{proof}

In the rest of this section, we will keep notation as in the proof of Theorem \ref{Exten_YL_Cat}.

\begin{corollary}\label{fusionrules}
Let $\C=\oplus_{g\in G}\C_g$ be an extension of a Fibonacci category $\F$. Then the fusion rules of $\C$ are:
$$Y_g\otimes Y_h\cong \delta_{gh}\oplus Y_{gh},$$
$$\delta_g\otimes Y_h\cong Y_{gh},Y_h\otimes \delta_g\cong Y_{hg},\delta_g\otimes \delta_h\cong \delta_{gh}.$$
\end{corollary}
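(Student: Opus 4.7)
The plan is to exploit two structural facts established in Theorem 4.7: the grading $\C=\oplus_{g\in G}\C_g$ is multiplicative, so $X\in\C_g$ and $Z\in\C_h$ force $X\otimes Z\in\C_{gh}$; and each component $\C_g$ has exactly two simples $\delta_g,Y_g$ of known Frobenius-Perron dimensions $1$ and $\tfrac{1+\sqrt5}{2}$. All four fusion rules will then be forced by dimension counting inside the two-element set $\Irr(\C_g)$.

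First I would handle the pointed rule $\delta_g\otimes\delta_h\cong\delta_{gh}$. Since $\delta_g$ and $\delta_h$ are invertible, their tensor product is invertible, and it lies in $\C_{gh}$; as $\delta_{gh}$ is the unique invertible simple object of $\C_{gh}$, the equality follows. Next, by Lemma \ref{action_of_G(C)} applied to the invertible $\delta_g$, the objects $\delta_g\otimes Y_h$ and $Y_h\otimes\delta_g$ are simple; they land in $\C_{gh}$ and $\C_{hg}$ respectively and have Frobenius-Perron dimension $\tfrac{1+\sqrt5}{2}$, so they must coincide with $Y_{gh}$ and $Y_{hg}$.

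For the main fusion $Y_g\otimes Y_h$, I would compute $\FPdim(Y_g\otimes Y_h)=\bigl(\tfrac{1+\sqrt5}{2}\bigr)^{2}=\tfrac{3+\sqrt5}{2}$ and observe that the object sits in $\C_{gh}$. Writing $Y_g\otimes Y_h\cong a\,\delta_{gh}\oplus b\,Y_{gh}$ for non-negative integers $a,b$, the equation $a+b\cdot\tfrac{1+\sqrt5}{2}=\tfrac{3+\sqrt5}{2}$ together with $\mathbb{Q}$-linear independence of $1$ and $\sqrt5$ forces $a=b=1$, giving $Y_g\otimes Y_h\cong\delta_{gh}\oplus Y_{gh}$.

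No real obstacle is expected; the proof is essentially a bookkeeping argument in the Grothendieck ring, and the only subtle point is verifying that $\Irr(\C_{gh})=\{\delta_{gh},Y_{gh}\}$ exhausts the possible constituents of the three relevant tensor products, which is exactly what Theorem \ref{Exten_YL_Cat}(2) provides.
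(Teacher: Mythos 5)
Your proposal is correct and follows essentially the same route as the paper: the paper also places each product in the appropriate graded component, uses the fact from Theorem \ref{Exten_YL_Cat} that $\Irr(\C_{gh})=\{\delta_{gh},Y_{gh}\}$, and deduces $Y_g\otimes Y_h\cong\delta_{gh}\oplus Y_{gh}$ by dimension counting, with the remaining rules forced by simplicity of tensoring with invertibles. Your version merely makes the bookkeeping (the $\mathbb{Q}$-linear independence of $1$ and $\sqrt5$) explicit where the paper leaves it implicit.
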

\begin{proof}
Since $Y_g\otimes Y_h$ is contained in $\C_{gh}$ and $\C_{gh}$ only contains two non-isomorphic simple objects, a dimension counting shows that $Y_g\otimes Y_h\cong \delta_{gh}\oplus Y_{gh}$. To prove the remained isomorphisms, it suffices to notice that they are simple objects and contained in $\C_{gh}$, $\C_{hg}$ and $\C_{gh}$, respectively.
\end{proof}

\begin{remark}\label{exten-Fib}
(1)\, The corollary above shows that the action  of the group $G(\C)$ by left(or right) tensor multiplication on the set $\Irr(\C)-G(\C)$ is transitive. More precisely, $Y_h\cong \delta_{hg^{-1}}\otimes Y_g$ for all $g,h\in G$. Therefore, $\C$ is a generalized near-group fusion category.

(2)\, It follows from Corollary \ref{fusionrules} that the Grothendieck ring $K(\C)$ of $\C$ is commutative if and only if $G$ is commutative.
\end{remark}

It is easy to check that the map $f:\U(\C)\to G(\C)$ given by $f(g)=\delta_g$ is an isomorphism of groups, by Corollary \ref{fusionrules}. Hence we get the following corollary.

\begin{corollary}\label{universalGraGroup}
The universal grading group $\U(\C)$ is isomorphic to the group $G(\C)$.
\end{corollary}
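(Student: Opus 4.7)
The plan is to verify that the natural map $f:\U(\C)\to G(\C)$ defined by $f(g) = \delta_g$ is a group isomorphism. Well-definedness is immediate from Theorem \ref{Exten_YL_Cat}(2): each component $\C_g$ contains exactly one invertible simple object, namely $\delta_g$ (the other simple object $Y_g$ has Frobenius--Perron dimension $(1+\sqrt5)/2\neq 1$), so the assignment $g\mapsto \delta_g$ is unambiguous.

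For the homomorphism property, the identity $\delta_g\otimes\delta_h\cong\delta_{gh}$ recorded in Corollary \ref{fusionrules} is precisely the statement $f(g)\cdot f(h) = f(gh)$ in the group $G(\C)$. For injectivity, if $\delta_g\cong\delta_h$ as simple objects of $\C$, then this simple object lies in both $\C_g$ and $\C_h$; since $\C = \bigoplus_{g\in G}\C_g$ is a direct sum decomposition of abelian categories with pairwise distinct homogeneous components, we must have $g = h$. Surjectivity then follows from a cardinality count: Theorem \ref{Exten_YL_Cat}(1) gives $|G(\C)| = n$, while part (3) of the same theorem gives $|\U(\C)| = n$, so any injection between finite groups of equal cardinality is automatically a bijection. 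The only ``obstacle'' here is conceptual --- recognizing that all the required structural facts have already been packaged into Theorem \ref{Exten_YL_Cat} and Corollary \ref{fusionrules}, so this corollary is really a short bookkeeping observation that the component labels of the grading and the isomorphism classes of invertible simple objects are parametrized compatibly by the same group.
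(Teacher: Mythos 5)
Your proposal is correct and follows exactly the paper's route: the paper proves this corollary with the one-line remark preceding it, namely that the map $f:\U(\C)\to G(\C)$, $f(g)=\delta_g$, is a group isomorphism by Corollary \ref{fusionrules}. You have simply filled in the routine verifications (well-definedness via Theorem \ref{Exten_YL_Cat}(2), homomorphism via the fusion rule $\delta_g\otimes\delta_h\cong\delta_{gh}$, injectivity via the disjointness of homogeneous components, surjectivity via the count $|G(\C)|=n=|\U(\C)|$), all of which are sound.
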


%

%
Let $\C$ be a fusion category, and let $\A, \B$ be fusion subcategories of $\C$. Let $\A\B$ be the full abelian (not necessarily tensor) subcategory of $\C$ spanned by direct summands in $X\otimes Y$, where $X\in \A$ and $Y\in \B$. We say that $\C$ factorizes into a product of $\A$ and $\B$ if $\C=\A\B$. A factorization $\C=\A\B$ of $\C$ is called exact if $A\cap \B=\vect$, and is denoted by $\C=\A\bullet\B$, see \cite{gelaki2017exact}.

By \cite[Theorem 3.8]{gelaki2017exact}, $\C=\A\bullet\B$ is an exact factorization if and only every simple object of $\C$ can be uniquely expressed in the form $X\otimes Y$, where $X\in \Irr(\A)$ and $\Irr(\B)$. If $\C$ is braided and $\C=\A\bullet\B$ admits an exact factorization then $\C=\A\boxtimes\B$ a Deligne tensor product, see \cite[Corollary 3.9]{gelaki2017exact}.

\begin{theorem}\label{extension_YL}
Let $\C=\oplus_{g\in G}\C_g$ be an extension of a Fibonacci category $\F$. Then $\C=\F\bullet \C_{pt}$ is an exact factorization of $\F$ and $\C_{pt}$.
\end{theorem}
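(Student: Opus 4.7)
The plan is to apply the characterization of exact factorizations from \cite[Theorem 3.8]{gelaki2017exact}, as recalled in the paragraph just above Theorem \ref{extension_YL}: $\C = \A \bullet \B$ is an exact factorization if and only if every simple object of $\C$ can be uniquely expressed (up to isomorphism) in the form $X \otimes Y$ with $X \in \Irr(\A)$ and $Y \in \Irr(\B)$. All of the ingredients needed to verify this criterion for $\A = \F$ and $\B = \C_{pt}$ have already been assembled in the preceding results.

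First, I would collect the relevant lists of simple objects. By Theorem \ref{Exten_YL_Cat}(1)--(2), $\Irr(\C) = \{\delta_g : g \in G\} \sqcup \{Y_g : g \in G\}$, the invertible simple objects of $\C$ are precisely the $\delta_g$, and therefore $\Irr(\C_{pt}) = \{\delta_g : g \in G\}$. Since $\F = \C_e$, we have $\Irr(\F) = \{1, Y\}$ where $Y = Y_e$.

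Next, I would tensor each element of $\Irr(\F)$ with each element of $\Irr(\C_{pt})$ and observe, using Corollary \ref{fusionrules}, that $1 \otimes \delta_g = \delta_g$ and $Y \otimes \delta_g = Y_e \otimes \delta_g = Y_{eg} = Y_g$ for every $g \in G$. Hence every simple object of $\C$ does arise as such a tensor product.

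Finally, I would check uniqueness: if $X \otimes \delta_g \cong X' \otimes \delta_h$ with $X, X' \in \{1, Y\}$ and $g, h \in G$, then comparing Frobenius-Perron dimensions (which are multiplicative) gives $\FPdim(X) = \FPdim(X')$, and since $1 \neq \tfrac{1+\sqrt{5}}{2}$ this forces $X = X'$; tensoring with $X^*$ then yields $\delta_g \cong \delta_h$, so $g = h$. Invoking \cite[Theorem 3.8]{gelaki2017exact} completes the argument. There is no real obstacle here: the decomposition $Y_g = Y \otimes \delta_g$ supplied by Corollary \ref{fusionrules} is exactly what makes the factorization exact, and the distinctness of the Frobenius-Perron dimensions of $1$ and $Y$ is what guarantees uniqueness.
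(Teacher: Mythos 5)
Your proposal is correct and follows essentially the same route as the paper's own proof: identify $\Irr(\C_{pt})=\{\delta_g \mid g\in G\}$ via Theorem \ref{Exten_YL_Cat}, use the fusion rules of Corollary \ref{fusionrules} to exhibit every simple object as $X\otimes \delta_g$ with $X\in\Irr(\F)$, and invoke \cite[Theorem 3.8]{gelaki2017exact}. If anything, you are more careful than the paper, which leaves the uniqueness half of the criterion implicit (though in the case $X=X'=Y$ your phrase ``tensoring with $X^*$'' deserves one extra line --- compare the invertible summands of $\delta_g\oplus Y_g\cong\delta_h\oplus Y_h$, or simply note that $Y_g\in\C_g$ and $Y_h\in\C_h$ lie in distinct components of the faithful grading).
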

\begin{proof}
By Theorem \ref{Exten_YL_Cat}, $\Irr(\C_{pt})=\{\delta_g|g\in G\}$. Then Corollary \ref{fusionrules} shows that every simple object in $\C$ admits the form $X\otimes Y$ where $X\in \F$ and $Y\in \C_{pt}$. The Theorem then follows from \cite[Theorem 3.8]{gelaki2017exact}.
\end{proof}

\begin{corollary} Let $\C$ be a braided fusion category. Suppose that
$\C=\oplus_{g\in G}\C_g$ is an extension of a Fibonacci category $\F$. Then $\C \cong \F\boxtimes \C_{pt}$ as braided fusion categories. In this case, $\mathcal{Z}_2(\C)=\mathcal{Z}_2(\C_{pt})$.
\end{corollary}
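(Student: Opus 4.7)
The plan is to invoke the braided refinement of the exact-factorization result used in Theorem \ref{extension_YL}, and then compute the Müger center by additivity across the Deligne product.

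First I would apply Theorem \ref{extension_YL} directly: since $\C = \oplus_{g \in G} \C_g$ is assumed to be an extension of a Fibonacci category $\F$, that theorem gives the exact factorization $\C = \F \bullet \C_{pt}$. Because $\C$ is now assumed to carry a braiding, the braided version of exact factorizations quoted in the previous section, namely \cite[Corollary 3.9]{gelaki2017exact}, upgrades this to a braided equivalence $\C \cong \F \boxtimes \C_{pt}$. Concretely one just needs to observe that the two fusion subcategories centralize one another, which is automatic from the uniqueness of the expression of every simple object of $\C$ in the form $Y_g \cong Y \otimes \delta_g$ with $Y \in \Irr(\F)$ and $\delta_g \in \Irr(\C_{pt})$ (Corollary \ref{fusionrules}); the rest of the statement is then exactly what \cite[Corollary 3.9]{gelaki2017exact} produces.

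For the second assertion I would appeal to the standard identity $\mathcal{Z}_2(\A \boxtimes \B) = \mathcal{Z}_2(\A) \boxtimes \mathcal{Z}_2(\B)$ for braided fusion categories. Applied to $\C \cong \F \boxtimes \C_{pt}$, this gives $\mathcal{Z}_2(\C) = \mathcal{Z}_2(\F) \boxtimes \mathcal{Z}_2(\C_{pt})$. Since a Fibonacci category is non-degenerate (as recorded at the start of Section \ref{sec4}), $\mathcal{Z}_2(\F) = \vect$, and therefore $\mathcal{Z}_2(\C) = \mathcal{Z}_2(\C_{pt})$.

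There is no real technical obstacle in this corollary: Theorem \ref{extension_YL} supplies the exact factorization, and the cited braided factorization theorem converts it into a Deligne product. The only small point deserving a sentence in the write-up is why the centralization hypothesis of \cite[Corollary 3.9]{gelaki2017exact} is automatic here, and this is handled by the pointedness of $\C_{pt}$ together with the rank-$2$ structure of each graded component described in Theorem \ref{Exten_YL_Cat}.
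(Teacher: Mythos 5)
The step that fails is the ``upgrade'' to a braided equivalence. As quoted in Section \ref{sec4}, \cite[Corollary 3.9]{gelaki2017exact} turns an exact factorization of a braided fusion category into a Deligne tensor product \emph{of fusion categories}; it does not by itself produce an equivalence of \emph{braided} categories, which is what the corollary asserts. For that you need $\F$ and $\C_{pt}$ to centralize each other, and your justification --- that centralization is ``automatic from the uniqueness of the expression'' of simple objects --- is incorrect: unique expressibility of simples as $X\otimes Y$ is exactly the exact-factorization condition of \cite[Theorem 3.8]{gelaki2017exact} and carries no information about the braiding. A concrete counterexample to your implication: let $\C=\vect_{\Zz_2\times \Zz_2}$ with trivial associator, braided via a bicharacter pairing the two $\Zz_2$-factors nontrivially, and let $\A,\B$ be the two rank-$2$ pointed subcategories. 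Then $\C=\A\bullet\B$ is an exact factorization and every simple object is uniquely $a\otimes b$, yet $\A$ and $\B$ do not centralize each other and $\C\ncong \A\boxtimes\B$ as braided categories (one can even arrange $\C$ non-degenerate while $\A\boxtimes\B$ is symmetric). Your fallback --- ``pointedness of $\C_{pt}$ plus the rank-$2$ graded components'' --- is likewise not a proof; nothing in Theorem \ref{Exten_YL_Cat} constrains the monodromy between $\C_{pt}$ and $\F$.

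The paper avoids the issue entirely: since Fibonacci categories are non-degenerate, Theorem \ref{MugerThm} (M\"uger decomposition) \emph{directly} yields a braided equivalence $\C\cong \F\boxtimes \F'$, and then $\F'=\C_{pt}$ because $\F_{pt}\cong\vect$ forces $\C_{pt}=(\F')_{pt}\subseteq \F'$, while $\FPdim(\F')=\FPdim(\C)/\FPdim(\F)=n=\FPdim(\C_{pt})$ by Theorem \ref{Exten_YL_Cat}. So non-degeneracy of $\F$, not the exact factorization of Theorem \ref{extension_YL}, is the engine of the proof. If you insist on your route, the centralization can be rescued, but only by using the Fibonacci fusion rules: for invertible $\delta$ and the non-invertible $Y\in\F$, the double braiding acts on $Y$ by a scalar $\mu_{\delta,Y}$ which is multiplicative under tensor products, so decomposing $Y\otimes Y\cong \1\oplus Y$ gives simultaneously $\mu_{\delta,Y}^2=1$ and $\mu_{\delta,Y}^2=\mu_{\delta,Y}$, whence $\mu_{\delta,Y}=1$ and $\C_{pt}\subseteq \F'$; with that in hand the multiplication functor $\F\boxtimes\C_{pt}\to\C$ is a braided tensor functor, surjective, and an equivalence by comparing Frobenius--Perron dimensions. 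Your second step, $\mathcal{Z}_2(\F\boxtimes\C_{pt})=\mathcal{Z}_2(\F)\boxtimes \mathcal{Z}_2(\C_{pt})$ together with $\mathcal{Z}_2(\F)=\vect$, is correct once the braided equivalence is actually established, and matches the paper's implicit reasoning (``it is enough to show the first statement'').
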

\begin{proof} It is enough to show the first statement.
Since Fibonacci categories are non-degenerate, then $\C \cong \F\boxtimes \F'$, by Theorem \ref{MugerThm}. Since $\F_{pt} \cong \vect$, then $\F' = \C_{pt}$, as was to be shown.
\end{proof}

\section{Slightly degenerate generalized near-group fusion categories}\label{sec5}
\begin{lemma}\label{dimuc}
Let $\C$ be a generalized near-group fusion category of type $(G,\Gamma,k_1,\cdots,k_n)$. Assume that $\FPdim(\C_{pt})=|\mathcal{U}(\C)|$ and $(k_1,\cdots,k_n)\neq(0,\cdots,0)$. Then $\C_{ad}$ is a Fibonacci category.
\end{lemma}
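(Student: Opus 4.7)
The plan is to show first that $\C_{ad}$ is again a generalized near-group fusion category whose invertible part is exactly $\Gamma$, then use the dimension hypothesis to force $|\Gamma|=1$, and finally appeal to Ostrik's rank-$2$ classification.

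First I would identify $\C_{ad}$ precisely. By Proposition \ref{subcategory} (together with the fact that $\C_{ad}$ is non-pointed, which is Proposition \ref{categorytype}(1)), $\C_{ad}$ is itself a generalized near-group fusion category. Its invertible simples are precisely the elements of $\Gamma$: any invertible $g\in \C_{ad}$ appears as a summand of some $Y\otimes Y^\ast$ with $Y$ non-invertible, and by Lemma \ref{fusionrules3}(1) we have $Y\otimes Y^\ast = X_1\otimes X_1^\ast$, whose invertible summands form exactly $\Gamma$. Next, since $G[X_i]=\Gamma$ for every non-invertible simple $X_i$ of $\C$, the action of $G(\C_{ad})=\Gamma$ on the non-invertible simples of $\C_{ad}$ has stabilizer equal to all of $\Gamma$. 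As this action is transitive, there is a \emph{unique} non-invertible simple object $X\in \C_{ad}$, so $\rk(\C_{ad})=|\Gamma|+1$.

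Next I would compare two expressions for $\FPdim(\C_{ad})$. On the one hand, directly from its simple objects,
\begin{equation*}
\FPdim(\C_{ad}) \;=\; |\Gamma|+\FPdim(X)^2.
\end{equation*}
On the other hand, the universal grading gives $\FPdim(\C) = |\mathcal{U}(\C)|\,\FPdim(\C_{ad})$, while Proposition \ref{normalsubgroup}(3) yields $\FPdim(\C) = [G:\Gamma](\FPdim(X)^2+|\Gamma|)$. The hypothesis $\FPdim(\C_{pt})=|\mathcal{U}(\C)|$ means $|G|=|\mathcal{U}(\C)|$, so
\begin{equation*}
\FPdim(\C_{ad}) \;=\; \frac{[G:\Gamma](\FPdim(X)^2+|\Gamma|)}{|G|} \;=\; \frac{\FPdim(X)^2+|\Gamma|}{|\Gamma|}.
\end{equation*}
Equating the two formulas and clearing $|\Gamma|$ gives $(|\Gamma|-1)(|\Gamma|+\FPdim(X)^2)=0$, forcing $|\Gamma|=1$.

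Finally, with $|\Gamma|=1$ the category $\C_{ad}$ has rank $2$, is non-pointed (as $X$ is non-invertible), and so must satisfy the Yang-Lee fusion rule $X\otimes X\cong \mathbf{1}\oplus X$ by Ostrik's classification of rank-$2$ fusion categories recalled at the start of Section \ref{sec4}. Hence $\C_{ad}$ is a Fibonacci category. The only delicate point in the argument is the identification of $G(\C_{ad})$ with $\Gamma$ and the uniqueness of the non-invertible simple of $\C_{ad}$, which is what makes the dimension count so tight; everything else is routine bookkeeping with the results already established.
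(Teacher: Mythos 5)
Your overall strategy---computing $\FPdim(\C_{ad})$ two ways and forcing $|\Gamma|=1$---is viable, but the step on which everything rests has a genuine gap. You claim $G(\C_{ad})=\Gamma$ because ``any invertible $g\in\C_{ad}$ appears as a summand of some $Y\otimes Y^*$ with $Y$ non-invertible.'' That is false: $\C_{ad}$ is the fusion subcategory \emph{generated} by the summands of the $Y\otimes Y^*$, and closing up under tensor products can create invertible objects outside $\Gamma$. The paper's own example, the super-modular category $PSU(2)_6$ of Section 3, shows this concretely: its simples are $\1,\delta,X,Y$ with $X\otimes X=\1\oplus X\oplus Y$ and $X\otimes Y=\delta\oplus X\oplus Y$, so $\Gamma=G[X]$ is trivial, yet $\delta\in\C_{ad}$ (because $X,Y\in\C_{ad}$ and $\C_{ad}$ is closed under tensor products), whence $G(\C_{ad})\cong\Zz_2\neq\Gamma$ and $\C_{ad}=\C$ has \emph{two} non-invertible simples rather than one. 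Since your derivation of $G(\C_{ad})=\Gamma$, of the uniqueness of the non-invertible simple of $\C_{ad}$, and of the equality $\FPdim(\C_{ad})=|\Gamma|+\FPdim(X)^2$ nowhere uses the hypothesis $\FPdim(\C_{pt})=|\U(\C)|$, this example refutes the argument as written; what the proof of Proposition \ref{categorytype}(1) actually gives is only that $\C_{ad}$ is \emph{generated} by $\Gamma$ and the $X_i$ with $k_i\neq 0$, i.e.\ a containment, not a description of $\Irr(\C_{ad})$.

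The good news is that your dimension count survives with an inequality in place of the bad equality. Since $\C_{ad}$ contains all of $\Gamma$ and at least one non-invertible simple $X_i$ (any $i$ with $k_i\neq 0$), and all non-invertible simples have the same dimension $\FPdim(X)$, you get $\FPdim(\C_{ad})\geq |\Gamma|+\FPdim(X)^2$. Your other computation is exact and correct: $\FPdim(\C)=|\U(\C)|\,\FPdim(\C_{ad})$, Proposition \ref{normalsubgroup}(3), and the hypothesis $|G|=\FPdim(\C_{pt})=|\U(\C)|$ give $\FPdim(\C_{ad})=(\FPdim(X)^2+|\Gamma|)/|\Gamma|$. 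Comparing forces $1/|\Gamma|\geq 1$, so $|\Gamma|=1$, and then equality holds throughout, which pins down $\Irr(\C_{ad})=\{\1,X_i\}$; Ostrik's rank-$2$ classification finishes as you say. Note that the paper takes a different, purely counting route: under the hypothesis, the number of invertibles equals the number of components of the universal grading, so by Proposition \ref{categorytype}(2) each component contains exactly one invertible object; then Proposition \ref{normalsubgroup}(2) bounds the total number of non-invertible simples by $[G:\Gamma]\leq |G|=|\U(\C)|$, so each component also contains exactly one non-invertible simple, making $\C_{ad}$ of rank $2$. That argument never needs to identify $G(\C_{ad})$, which is precisely the point where your version breaks.
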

\begin{proof}
By Theorem \ref{categorytype}, every component $\C_g$ of the universal grading of $\C$ at least has one invertible simple object. Hence every component $\C_g$ exactly contains one invertible simple object by our assumption $\FPdim(\C_{pt})=|\mathcal{U}(\C)|$.

By Proposition \ref{normalsubgroup}, the number of non-isomorphic non-invertible simple objects is not more than the order of $G$. In addition, Theorem \ref{categorytype} shows that every component $\C_g$ admits the same type. Hence every component $\C_g$ only contains two simple objects: one is invertible and the other is not. In particular, $\C_{ad}$ is a Fibonacci category by the classification of rank $2$ fusion categories \cite{ostrik2003fusion}.
\end{proof}

\medbreak
The following corollary is also obtained in \cite{Thornton2012Generalized} under the assumption that the fusion categories involved are modular.

\begin{corollary}\label{non-deg}
Let $\C$ be a braided generalized near-group fusion category of type $(G,\Gamma,k_1,\cdots,k_n)$. Assume that $\C$ is non-degenerate. Then $\C$ fits into one of the following classes:

(1)\, $\C\cong \I\boxtimes \B$, where $\I$ is an Ising category, $\B$ is a pointed fusion category.

(2)\, $\C\cong \C_{ad}\boxtimes \C_{pt}$,  where $\C_{ad}$ is a Fibonacci category.
\end{corollary}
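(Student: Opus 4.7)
The plan is to split according to whether the tuple $(k_1,\cdots,k_n)$ vanishes, and to apply the M\"{u}ger decomposition theorem in each case after identifying the adjoint subcategory.

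First I would invoke Proposition \ref{non_degenerate} to translate the non-degeneracy hypothesis into the numerical identity $\FPdim(\C_{pt})=|G(\C)|=|\mathcal{U}(\C)|$; this is precisely the hypothesis on the pointed dimension required by Lemma \ref{dimuc}.

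Suppose first that $(k_1,\cdots,k_n)\neq (0,\cdots,0)$. Lemma \ref{dimuc} then forces $\C_{ad}$ to be a Fibonacci category. Fibonacci categories are non-degenerate, so Theorem \ref{MugerThm} yields a braided equivalence $\C\cong\C_{ad}\boxtimes\C_{ad}'$, and it remains to identify $\C_{ad}'$ with $\C_{pt}$. Since the pointed part of a Deligne tensor product is the Deligne tensor product of the pointed parts, and the pointed part of a Fibonacci category is trivial, one has $\C_{pt}\subseteq \C_{ad}'$; on the other hand $\FPdim(\C_{ad}')=\FPdim(\C)/\FPdim(\C_{ad})=|\mathcal{U}(\C)|=\FPdim(\C_{pt})$, so equality holds and we obtain conclusion (2).

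Now suppose $(k_1,\cdots,k_n)=(0,\cdots,0)$. By Remark \ref{T_Y}(1), $\C$ is a generalized Tambara-Yamagami fusion category, and at this point I would invoke Natale's classification of non-degenerate generalized Tambara-Yamagami categories from \cite{natale2013faithful} to extract the decomposition $\C\cong\I\boxtimes\B$ with $\I$ an Ising category and $\B$ a pointed fusion category, yielding conclusion (1). The main obstacle is this branch, whose content is essentially entirely encoded in the cited classification; the Fibonacci branch by contrast reduces cleanly to M\"{u}ger decomposition together with a dimension count.
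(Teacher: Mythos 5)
Your proposal is correct and follows essentially the same route as the paper: split on whether $(k_1,\cdots,k_n)$ vanishes, settle the vanishing case by Remark \ref{T_Y} together with Natale's classification \cite[Theorem 5.4]{natale2013faithful}, and settle the other case by Proposition \ref{non_degenerate}, Lemma \ref{dimuc} and Theorem \ref{MugerThm}. The only (harmless) deviation is that you identify $\C_{ad}'=\C_{pt}$ by hand, via $(\A\boxtimes\B)_{pt}=\A_{pt}\boxtimes\B_{pt}$ and a dimension count, where the paper simply cites \cite[Corollary 3.27]{drinfeld2010braided}.
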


\begin{proof}
We first assume that $(k_1,\cdots,k_n)=(0,\cdots,0)$. Then Part (1) follows from Remark \ref{T_Y} and \cite[Theorem 5.4]{natale2013faithful}.

We then assume that $(k_1,\cdots,k_n)\neq(0,\cdots,0)$. Lemma \ref{non_degenerate} shows that $\FPdim(\C_{pt})=|\mathcal{U}(\C)|$. Hence $\C_{ad}$ is a Fibonacci category by Lemma \ref{dimuc}. Hence $\C\cong \C_{ad}\boxtimes \C_{ad}'$ by Theorem \ref{MugerThm}, where $\C_{ad}'=\C_{pt}$ by \cite[Corollary 3.27]{drinfeld2010braided}. Hence $\C\cong \C_{ad}\boxtimes \C_{pt}$.
\end{proof}

\medbreak
To prove the theorem below, we should recall the construction from  \cite{DNS2019}. For every $N\geq 1$, an $N$-Ising fusion category  is a graded extension of a pointed fusion category of rank 2 by the cyclic group of order $\mathbb Z_{2^N}$. In addition every $N$-Ising fusion category is strictly weakly integral. Its group of invertible objects is isomorphic to $\Zz_2 \times \Zz_{2^{N-1}}$ and it has $2^{N-1}$ simple objects of Frobenius-Perron dimension $\sqrt 2$, none of which is self-dual except in the case $N = 1$.

As graded extensions of $\vect_{\Zz_2}$, $N$-Ising fusion categories are parameterized by the integer $N$ and a $2^N$th root of unity $\zeta$. The corresponding category is denoted by $\Ii_{N, \zeta}$.

It was shown that a braided $N$-Ising fusion category is always prime, that is, it does not contain any non-trivial non-degenerate fusion subcategories. It was also shown  that with respect to any possible braiding, an $N$-Ising fusion category is non-degenerate if and only if $N = 1$. Moreover, a slightly degenerate braided $N$-Ising category exists if and only if $N > 2$.

As shown in \cite{EM}, when $N \geq 2$ there is another family of non-pointed $\Zz_{2^N}$-extensions  of $\vect_{\Zz_2}$. They are not equivalent to any $N$-Ising fusion category and  do not admit any braiding .

\medbreak
One of the main result in \cite{DNS2019}  is the following theorem:

\begin{theorem}\label{DNSmain}
Let $\C$ be a non-pointed braided fusion category and suppose that $\C$ is an extension of a rank 2 pointed fusion category. Then $\C$ is equivalent as a fusion category to $\Ii_{N, \zeta} \boxtimes \B$, for some $N\geq 1$, where $\zeta \in k^\times$ is a primitive $2^N$th root of 1,  and $\B$ is a pointed braided fusion category.
\end{theorem}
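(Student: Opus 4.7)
My plan is to leverage the given $G$-grading $\C=\oplus_{g\in G}\C_g$ with $\C_e$ pointed of rank $2$. First I would note that the adjoint subcategory $\C_{ad}$ sits inside the trivial component $\C_e$, so $\C_{ad}$ is pointed of rank at most $2$; since $\C$ is non-pointed we must have $\C_{ad}=\C_e=\vect_{\Zz_2}^\omega$ for some $\omega\in H^3(\Zz_2,k^\times)$. In particular the given grading coincides with the universal grading, $G=\U(\C)$, and $\FPdim(\C_g)=2$ for all $g$, which forces each $\C_g$ to be either pointed of rank $2$ or a single simple object of FP-dimension $\sqrt 2$.

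Next I would determine the coarse $\Zz_2$-structure of $\C$. Let $T\subseteq G$ (respectively $S$) index the pointed (respectively Ising-type) components. Because $\sqrt 2\cdot\sqrt 2=2$ cannot be an integer multiple of $\sqrt 2$, a fusion-rule argument gives $S\cdot S\subseteq T$ and $S\cdot T\subseteq S$; combined with the non-emptiness of $S$ (as $\C$ is non-pointed), this makes $T$ a subgroup of index $2$ in $G$ and exhibits $\C$ as a $\Zz_2$-extension of $\C_{pt}=\oplus_{g\in T}\C_g$.

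Then I would extract a maximal Ising-like sub-extension. Pick any simple $X$ of FP-dimension $\sqrt 2$ in some $\C_{g_0}$ with $g_0\in S$ and set $\D:=\C\langle X\rangle$. Then $\D\cap\C_e=\C_{ad}$ (it contains $\1$ and the nontrivial invertible summand of $X\otimes X^*$), so $\D$ is a graded extension of $\C_{ad}$ by the cyclic subgroup $H=\langle g_0\rangle$ with every component of FP-dimension $2$. Using the braiding inherited by $\D$, one rules out odd prime divisors of $|H|$: an odd-prime-order element in $H$ would generate a pointed sub-component whose group structure is incompatible with the braided $2$-grading forced by the Ising-type generator $X$. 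Hence $|H|=2^N$, and a direct classification of braided $\Zz_{2^N}$-extensions of $\vect_{\Zz_2}^\omega$ with $2$-dimensional components — using the Etingof-Nikshych-Ostrik cohomological framework for graded extensions together with Galindo's description of graded braided categories — identifies $\D\cong\Ii_{N,\zeta}$ for some primitive $2^N$-th root $\zeta$.

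The main obstacle will be producing a pointed braided complement $\B$ with $\C\cong\Ii_{N,\zeta}\boxtimes\B$: for $N\geq 2$ the category $\D$ is not non-degenerate, so Theorem \ref{MugerThm} does not apply directly. My plan to circumvent this is to set $\B:=\D'$, the centralizer of $\D$ in $\C$, and to combine primality of $\Ii_{N,\zeta}$ (which forces $\D\cap\B=\vect$) with Lemma \ref{double_centralzer} and FP-dimension counting to conclude that $\B$ is pointed and that $\C=\D\vee\B$. The exact-factorization criterion of \cite{gelaki2017exact} then upgrades this to the Deligne product $\C\cong\D\boxtimes\B=\Ii_{N,\zeta}\boxtimes\B$, as required.
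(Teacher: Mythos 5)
First, note what you are being compared against: this paper does not prove Theorem \ref{DNSmain} at all --- it is imported verbatim from \cite{DNS2019} --- so the only yardsticks are the cited source and the facts about $\Ii_{N,\zeta}$ recalled in Section \ref{sec5}. Your first two steps are correct and match the generalized Tambara--Yamagami picture: $\C_{ad}=\C_e\cong\vect_{\Zz_2}^{\omega}$, each component is pointed of rank $2$ or a single $\sqrt2$-object, and the pointed components form an index-$2$ subgroup. Your third step, however, is false as stated: it is not true that $\D=\C\langle X\rangle$ has $2$-power cyclic grading group for an \emph{arbitrary} $\sqrt2$-simple $X$. Take $\C=\Ii_{1,\zeta}\boxtimes\vect_{\Zz_3}$ with any braiding on the pointed factor, and $X'=X\boxtimes\chi$ with $\chi$ a generator of $\Zz_3$; then $X'\otimes X'\cong(\1\oplus\delta)\boxtimes\chi^{2}$, from which one checks $\C\langle X'\rangle=\C$, a $\Zz_6$-extension, so $|H|=6$ and your ``rule out odd primes via the braided $2$-grading'' heuristic has no mechanism behind it. The odd part must be split off globally first, using the decomposition of nilpotent braided fusion categories \cite[Theorem 1.1]{drinfeld2007g} --- exactly the reduction this paper performs at the start of Section \ref{sec6} --- after which the cyclicity claim can be salvaged. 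Note also that identifying $\D\cong\Ii_{N,\zeta}$ ``by direct classification of braided $\Zz_{2^N}$-extensions of $\vect_{\Zz_2}^{\omega}$'' is precisely the main technical content of \cite{DNS2019} (resting on \cite{EM}), so as a proof of this theorem that step is close to circular rather than a reduction to standard tools.

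The fatal gap is your final step. You set $\B:=\D'$ and claim that primality of $\Ii_{N,\zeta}$ forces $\D\cap\B=\vect$. But $\D\cap\D'=\mathcal{Z}_2(\D)$ is the M\"{u}ger center of $\D$, a symmetric --- hence totally degenerate --- subcategory, and primality (the absence of nontrivial \emph{non-degenerate} subcategories) says nothing about it. By the facts recalled just before the theorem, a braided $N$-Ising category is non-degenerate if and only if $N=1$; hence $\mathcal{Z}_2(\D)\neq\vect$ for every $N\geq 2$, and for $N>2$ one has slightly degenerate examples with $\svect\subseteq\D\cap\D'$. So the exact-factorization axiom $\D\cap\B=\vect$ fails for your choice of $\B$, Theorem \ref{MugerThm} is equally unavailable, and the dimension count goes the wrong way, since $\FPdim(\D)\FPdim(\D')=\FPdim(\C)\FPdim(\D\cap\mathcal{Z}_2(\C))$ by \cite[Theorem 3.14]{drinfeld2010braided}. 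This failure is not incidental: it is exactly why the theorem asserts an equivalence only \emph{as fusion categories} rather than as braided categories, and why no centralizer/Deligne-product argument inside the braided structure can produce the complement in the degenerate cases; the proof in \cite{DNS2019} must construct the splitting by other, extension-theoretic means. Your proposal is sound up to the $\Zz_2$-grading over $\C_{pt}$, but both the extraction of $\Ii_{N,\zeta}$ (as formulated) and the splitting-off of $\B$ have genuine holes, the latter irreparable by the route you chose.
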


\begin{theorem}\label{slightly-deg1}
Let $\C$ be a braided generalized near-group fusion category of type $(G,\Gamma,k_1,\cdots,k_n)$. Assume that $(k_1,\cdots,k_n)=(0,\cdots,0)$ and $\C$ is slightly degenerate.  Then $cd(\C)=\{1,\sqrt{2}\}$ and $\C\cong \Ii_{N, \zeta} \boxtimes \B$, for some $N>2$, where $\zeta \in k^\times$ is a primitive $2^N$th root of 1,  and $\B$ is a pointed braided fusion category.
\end{theorem}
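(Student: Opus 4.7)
The strategy is to show $\C_{ad}$ is a pointed fusion category of rank $2$, invoke Theorem~\ref{DNSmain}, and then refine the parameter using slight degeneracy.

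First I would identify $\C_{ad}$. Since $(k_1,\dots,k_n)=(0,\dots,0)$, Lemma~\ref{fusionrules3} gives $X\otimes X^*=\bigoplus_{h\in\Gamma}h$ for every non-invertible simple object $X$, so $\C_{ad}$ is the pointed fusion subcategory whose group of invertible objects is exactly $\Gamma$, and $\FPdim(X)=\sqrt{|\Gamma|}$. A dimension count via Proposition~\ref{normalsubgroup} yields $\FPdim(\C)=2|G|$ and $|\mathcal{U}(\C)|=2|G|/|\Gamma|$. Applying Proposition~\ref{slig_degen} to the slightly degenerate braided $\C$, either $|\Gamma|=2$ (case~(1)) or $|\Gamma|=4$ (case~(2)).

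Next I rule out case~(2). Inspecting the proof of Proposition~\ref{slig_degen}, case~(2) is exactly the case $\C_{ad}\cap\mathcal{Z}_2(\C)=\svect$, so the generator $\delta$ of $\svect$ lies in the pointed category $\C_{ad}$, and hence $\delta\in\Gamma$. This would force $\delta\otimes X\cong X$ for a non-invertible simple $X$. But $\delta$ generates $\mathcal{Z}_2(\C)=\svect$ and so satisfies $c_{\delta,\delta}=-\id$; by \cite[Lemma 5.4]{muger2003structure}, such a $\delta$ cannot fix any simple object of Frobenius--Perron dimension $>1$. Contradiction, so $|\Gamma|=2$ and $cd(\C)=\{1,\sqrt{2}\}$.

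With $\C_{ad}$ now pointed of rank $2$, Theorem~\ref{DNSmain} produces a fusion-category equivalence $\C\cong\Ii_{N,\zeta}\boxtimes\B$ for some $N\geq 1$ and some pointed braided $\B$. The final step is to refine $N\geq 1$ to $N>2$: transporting the braiding of $\C$ across this equivalence, the hypothesis $\mathcal{Z}_2(\C)=\svect$ together with the pointed braided nature of $\B$ forces the $N$-Ising factor to be slightly degenerate as a braided fusion category, and by the discussion preceding Theorem~\ref{DNSmain} this happens only for $N>2$. The main obstacle is exactly this last refinement, since Theorem~\ref{DNSmain} supplies only a fusion-category (and not a priori braided) equivalence; everything before that is a clean combinatorial reduction using the GTY fusion rules and M\"{u}ger's freeness lemma.
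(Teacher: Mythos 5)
Up to the conclusion $|\Gamma|=2$, your argument is the paper's own proof in all essentials: the paper likewise obtains $|\mathcal{U}(\C)|=2[G:\Gamma]$ (citing \cite[Proposition 5.2(ii)]{natale2013faithful} where you do the equivalent direct count via Proposition \ref{normalsubgroup}(3) and $\FPdim(\C)=|\mathcal{U}(\C)|\FPdim(\C_{ad})$), splits into the two cases of Proposition \ref{slig_degen}, and discards $|\Gamma|=4$ exactly as you do: in that case $\svect\subseteq\C_{ad}$, so the fermion $\delta$ lies in $\Gamma$, whence $\dim\Hom(\delta\otimes X,X)=\dim\Hom(\delta,X\otimes X^*)=1$ gives $\delta\otimes X\cong X$, contradicting freeness of the fermion action (the paper cites \cite[Proposition 2.6(i)]{etingof2011weakly} where you cite M\"uger; same content). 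Then both proofs feed $|\Gamma|=2$ into Theorem \ref{DNSmain}.

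The genuine gap is your final refinement to $N>2$. The paper does not re-derive this; it invokes \cite[Lemma 4.14]{DNS2019} as a black box, and your substitute argument does not go through. First, since Theorem \ref{DNSmain} is only an equivalence of \emph{fusion} categories, the transported braiding on $\Ii_{N,\zeta}\boxtimes\B$ need not be a product of braidings on the two factors, so the M\"uger center does not factor; all you canonically get is that the fusion subcategory corresponding to $\Ii_{N,\zeta}$ inherits \emph{some} braiding, whose M\"uger center is a priori unknown. Second, and more seriously, even for an honest Deligne product of braided categories the identity $\mathcal{Z}_2(\A\boxtimes\B)=\mathcal{Z}_2(\A)\boxtimes\mathcal{Z}_2(\B)=\svect$ does \emph{not} force the Ising-type factor to be slightly degenerate: it is equally consistent that $\mathcal{Z}_2(\Ii_{N,\zeta})=\vect$ (i.e.\ $N=1$, by the discussion preceding Theorem \ref{DNSmain}) while $\mathcal{Z}_2(\B)=\svect$. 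Indeed, $\Ii_{1,\zeta}\boxtimes\svect$ is a braided, slightly degenerate category with the generalized Tambara--Yamagami fusion rules and $cd=\{1,\sqrt{2}\}$ whose Ising factor is non-degenerate, so the implication you assert (``$\mathcal{Z}_2(\C)=\svect$ plus $\B$ pointed braided forces the $N$-Ising factor slightly degenerate'') is false as a general statement about braided products. Excluding small $N$ therefore requires the finer, specific content of \cite[Lemma 4.14]{DNS2019} about how $\svect$ can sit inside a braided category of this shape, not merely the existence statement you quote from the preamble. You correctly flagged this step as the main obstacle, but the proposed resolution assumes exactly what has to be proved.
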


\begin{proof}
Since we assume that $(k_1,\cdots,k_n)=(0,\cdots,0)$, the adjoint subcategory $\C_{ad}$ is generated by $\Gamma$ and $\FPdim(X)=\sqrt{|\Gamma|}$ for all non-invertible simple object $X$ of $\C$. In particular, \cite[Proposition 5.2(ii)]{natale2013faithful} shows that
\begin{equation}\label{order}
\begin{split}
|\mathcal{U}(\C)|=2[G:\Gamma].
\end{split}
\end{equation}
By Proposition \ref{slig_degen}, $|G|=2|\mathcal{U}(\C)|$ or $|G|=|\mathcal{U}(\C)|$.

\medbreak
 Suppose that $|G|=2|\mathcal{U}(\C)|$.  In this case, equality \eqref{order} implies that $|\Gamma|=4$.  By Proposition \ref{slig_degen},  $\C_{ad}$ contains the M\"{u}ger center $\svect$ of $\C$. Let $\delta$ be the invertible simple object generating $\svect$. Then we may write $\Gamma=\{\1,\delta,g,h\}$. Hence $X\otimes X^*=\1\oplus \delta\oplus g\oplus h$ for any non-invertible simple object $X$. Then $\dim\Hom(\delta\otimes X,X)=\dim\Hom(\delta,X\otimes X^*)=1$, and therefore  $\delta\otimes X\cong X$. This contradicts \cite[Proposition 2.6(i)]{etingof2011weakly}.  This discards this possibility.

\medbreak
 Therefore $|G|=|\mathcal{U}(\C)|$. In this case, equality (\ref{order}) implies that $|\Gamma|=2$. Hence $cd(\C)=\{1,\sqrt{2}\}$ and $\C$ is an extension of a rank $2$ pointed fusion category. In particular, $\C$ is not pointed.  By Theorem \ref{DNSmain}, $\C\cong \Ii_{N, \zeta} \boxtimes \B$, for some $N$, where $\zeta \in k^\times$ is a primitive $2^N$th root of 1, and $\B$ is a pointed braided fusion category. The statement that $N>2$ follows from \cite[ Lemma 4.14]{DNS2019}.
\end{proof}

Before giving the proof of the following theorem, we recall the construction from \cite{bruillard2017classification}. The modular categories $SU(2)_{4k+2}$ are constructed as subquotient categories of representations of quantum groups $U_q\mathfrak{sl}_2$ with $q=e^{\frac{\pi}{4k+4}}$. Replacing $q$ by $q^t$ with $(t,4k + 4) = 1$, we can get new categories with the same fusion rules. These modular categories are denoted by $\C(\mathfrak{sl}_2, q^t, 4k + 4)$ and  their adjoint subcategories are denoted by $\C(\mathfrak{psl}_2, q^t, 4k + 4)$.  The following lemma is taken from \cite[Theorem 3.1]{bruillard2017classification}.

\begin{lemma}\label{bruill2017}
Any non-split super-modular category of rank $4$ is of the form $\C(\mathfrak{psl}_2, q^t, 8)$ with $(t,2)=1$.
\end{lemma}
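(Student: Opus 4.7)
The plan is to first reduce, using super-modularity and the rank hypothesis, to an essentially unique candidate fusion ring, and then invoke the rigidity of braided categorifications of that ring to identify $\C$ with a member of the family $\C(\mathfrak{psl}_2,q^t,8)$.

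First, I would write $\Irr(\C)=\{\1,\delta,X,Y\}$, where $\delta$ is the unique non-trivial invertible object generating $\Z_2(\C)=\svect$. Since $\delta$ is invertible, tensor product with $\delta$ permutes the non-invertible simples $\{X,Y\}$. If $\delta\otimes X\cong X$ and $\delta\otimes Y\cong Y$, then $\delta$ is a summand of each of $X\otimes X^*$ and $Y\otimes Y^*$, and the standard orbit analysis for super-modular categories shows that $\C$ splits as a Deligne product $\D\boxtimes\svect$ with $\D$ modular of rank $2$, contradicting non-splitness. Hence $\delta\otimes X\cong Y$, and in particular $\FPdim(X)=\FPdim(Y)=:d>1$.

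Next, I would determine the fusion rules. Write $X\otimes X^{*}=\1\oplus a\delta\oplus bX\oplus cY$ with $a\in\{0,1\}$ and $b,c\in\Zz_{\geq 0}$; tensoring on the left with $\delta$ and using $Y\otimes Y^{*}=X\otimes X^{*}$ interchanges $X$ and $Y$ on the right-hand side, forcing $b=c$. The dimension equation $d^{2}=1+a+(b+c)d$ together with Frobenius-Perron positivity and the requirement that the resulting ring admit a super-modular (rather than modular or Tannakian-extended) categorification leaves only the solution $a=0$, $b=c=1$, $d=1+\sqrt{2}$. An analogous analysis of $X\otimes X$ and $X\otimes Y$ pins down the remaining structure constants, and the resulting fusion ring is precisely that of $PSU(2)_{6}$ displayed in the example above.

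Finally, I would classify the braided categorifications of this fusion ring. Every such category embeds as (or already is) the adjoint subcategory of a categorification of the level-$6$ affine $\mathfrak{sl}_{2}$ fusion ring, and by the rigidity of the associators on this ring, these are exhausted by the quantum-group categories $\C(\mathfrak{sl}_{2},q^{t},8)$ for $q=e^{\pi i/8}$ and $t$ coprime to $8$, whose adjoint subcategories are precisely the $\C(\mathfrak{psl}_{2},q^{t},8)$ with $(t,2)=1$. The main obstacle, and the technical heart of the argument, is this last rigidity step: ruling out exotic categorifications of the $PSU(2)_{6}$ fusion ring requires computing an $H^{3}$-type obstruction for the candidate associators, and is precisely the content of \cite[Theorem 3.1]{bruillard2017classification}.
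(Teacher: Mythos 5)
The paper offers no proof of this lemma at all: it is imported verbatim from \cite[Theorem 3.1]{bruillard2017classification}, and since the final sentence of your proposal likewise defers ``the technical heart'' to that same theorem, in substance you and the paper are making the identical move, which is legitimate. But read as an actual argument, your sketch is circular: Theorem 3.1 of \cite{bruillard2017classification} is not merely an associator-rigidity statement about one fixed fusion ring --- it is the entire lemma (non-split super-modular of rank $4$ implies $\C\cong\C(\mathfrak{psl}_2,q^t,8)$), so the fusion-ring reduction you present as independent preparation is itself part of what the citation proves, and your division of labor (easy combinatorial reduction plus hard rigidity) misrepresents where the difficulty lies. Note also that the cited classification works directly with the fermionic modular data $(\hat S,\hat T)$ and its Galois symmetries, rather than by passing to a modular closure; your step ``every such category embeds as the adjoint subcategory of a categorification of the level-$6$ $\mathfrak{sl}_2$ fusion ring'' quietly assumes the existence of a minimal modular extension, which is nontrivial and was conjectural in general at the time.

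Two of your intermediate steps are moreover wrong or unjustified. First, the claim that tensoring $X\otimes X^*=\1\oplus a\delta\oplus bX\oplus cY$ with $\delta$ forces $b=c$ fails: $\delta\otimes(X\otimes X^*)\cong Y\otimes X^*$, so the left-hand side changes as well and no constraint on $b,c$ results. Concretely, $\F\boxtimes\svect$ with $\F$ a Fibonacci category is a rank-$4$ super-modular category with $a=0$, $b=1$, $c=0$: it satisfies every formal identity you invoke, and only non-splitness excludes it. Eliminating $b\neq c$, and all solutions other than $b=c=1$, $d=1+\sqrt{2}$, is precisely the hard content of \cite[Theorem 3.1]{bruillard2017classification} and cannot be dispatched by Frobenius--Perron positivity plus an appeal to ``admitting a super-modular categorification.'' Second, your case analysis at the start is unnecessarily roundabout: in any slightly degenerate braided fusion category the fermion acts freely on simple objects by \cite[Lemma 5.4]{muger2000galois} (a fact the paper itself uses repeatedly), which gives $\delta\otimes X\cong Y$ immediately without any orbit-splitting argument, and also yields $a=\dim\Hom(\delta,X\otimes X^*)=\dim\Hom(\delta\otimes X,X)=0$ at once, rather than as an output of the later numerology.
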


Let $\Irr_{\alpha}(\C)$ be the set of nonisomorphic simple objects of Frobenius-Perron dimension $\alpha$.

\begin{lemma}\label{lemma100}
Let $\C$ be a braided fusion category. Suppose that the M\"{u}ger center $\Z_2(\C)$ contains the category $\svect$ of super vector spaces. Then the rank of $\Irr_{\alpha}(\C)$ is even for every $\alpha$.
\end{lemma}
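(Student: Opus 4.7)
The plan is to produce a free $\Zz_2$-action on each $\Irr_\alpha(\C)$ by tensoring with the generator of $\svect$, so that orbits force $|\Irr_\alpha(\C)|$ to be even.

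First, I would let $\delta$ denote the unique non-trivial simple object of $\svect \subseteq \Z_2(\C)$; so $\delta$ is invertible with $\delta^{\otimes 2}\cong \1$ and $\delta$ is a fermion (its twist is $-1$). By Lemma \ref{action_of_G(C)}, the functor $\delta\otimes(-)$ permutes $\Irr(\C)$. Since $\FPdim(\delta)=1$, this permutation preserves Frobenius-Perron dimensions, hence restricts to a permutation $\sigma_\alpha$ of $\Irr_\alpha(\C)$ for every positive real $\alpha$. The relation $\delta\otimes\delta\cong\1$ makes $\sigma_\alpha$ an involution.

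The key step is to show that $\sigma_\alpha$ is fixed-point free, i.e., $\delta\otimes X\not\cong X$ for every $X\in \Irr(\C)$. This is exactly the input that the paper has already invoked elsewhere: it is contained in \cite[Lemma 5.4]{muger2003structure} (used in the proof of Corollary \ref{slig_degen_triv_grad}(2)) and in \cite[Proposition 2.6(i)]{etingof2011weakly} (used in the proof of Theorem \ref{slightly-deg1}). The point is that $\delta$ has twist $\theta_\delta=-1$ and lies in the Müger center, so $\theta_{\delta\otimes X}=\theta_\delta\theta_X=-\theta_X$; an isomorphism $\delta\otimes X\cong X$ would give $\theta_X=-\theta_X$, which is impossible.

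Once fixed-point freeness is established, $\Irr_\alpha(\C)$ decomposes under $\langle \sigma_\alpha\rangle\cong \Zz_2$ into orbits of size exactly $2$, namely pairs $\{X,\delta\otimes X\}$. Thus $|\Irr_\alpha(\C)|$ is even for every $\alpha$, which is the desired conclusion. The only non-routine step is the fermion no-fixed-point claim, and since it is already cited in the paper, the remainder of the argument is a one-line orbit count.
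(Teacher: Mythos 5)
Your proposal is correct and follows essentially the same route as the paper: the paper's proof also tensors with the generator $\delta$ of $\svect$ to get a fixed-point-free involution on $\Irr_{\alpha}(\C)$, invoking M\"{u}ger's lemma (cited there as \cite[Lemma 5.4]{muger2000galois}) for $\delta\otimes X\ncong X$, and then partitions $\Irr_{\alpha}(\C)$ into pairs $\{X_i,\delta\otimes X_i\}$. Your added twist-based justification of fixed-point freeness is just the standard argument behind that cited lemma, so there is no substantive difference.
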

\begin{proof}
Let $\delta$ be the invertible object generating $\svect$, and let $X$ be an element in $\Irr_{\alpha}(\C)$. Then $\delta\otimes X$ is also an element in $\Irr_{\alpha}(\C)$. By \cite[Lemma 5.4]{muger2000galois}, $\delta\otimes X$ is not isomorphic to $X$. This implies that $\Irr_{\alpha}(\C)$ admits a partition $\{X_1,\cdots,X_n\}\cup \{\delta\otimes X_1,\cdots,\delta\otimes X_n\}$. Hence the rank of $\Irr_{\alpha}(\C)$ is even.
\end{proof}

\begin{theorem}\label{slightly-deg2}
Let $\C$ be a braided generalized near-group fusion category of type $(G,\Gamma,k_1,\cdots,k_n)$. Assume that $(k_1,\cdots,k_n)\neq(0,\cdots,0)$ and $\C$ is slightly degenerate. Then $\C$ is exactly one of the following::

(1)\, $\C\cong \C_{ad}\boxtimes \C_{pt}$,  where $\C_{ad}$ is a Fibonacci category.

(2)\, $\C\cong\C_{ad}\boxtimes \B$,  where $\C_{ad}$ is a slightly degenerate fusion category of the form $\C(\mathfrak{psl}_2,q^t,8)$ with $q=e^{\frac{\pi i}{8}}$ and $(t,2)=1$, $\B$ is a non-degenerate pointed fusion category.
\end{theorem}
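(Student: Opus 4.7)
The plan is to split according to Proposition \ref{slig_degen}: either $|G(\C)|=|\mathcal{U}(\C)|$ or $|G(\C)|=2|\mathcal{U}(\C)|$, and in the latter case one moreover has $\svect\subseteq \mathcal{Z}_2(\C_{ad})$.

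For the first possibility, the hypothesis $(k_1,\ldots,k_n)\neq 0$ combined with Lemma \ref{dimuc} forces $\C_{ad}$ to be a Fibonacci category, which is non-degenerate by the classification of rank $2$ fusion categories. I would then apply the M\"uger Decomposition Theorem \ref{MugerThm} to obtain $\C\cong \C_{ad}\boxtimes \C_{ad}'$, and Proposition \ref{slig_degen} identifies $\C_{ad}'=\C_{pt}$ in the slightly degenerate setting, yielding conclusion (1).

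For the second possibility I would begin by using Proposition \ref{categorytype}(2) to show that each graded component, and in particular $\C_{ad}$, contains exactly $|G|/|\mathcal{U}|=2$ invertibles, so $(\C_{ad})_{pt}\cong \svect$. Because the generator $\delta_0$ of $\svect=\mathcal{Z}_2(\C)$ fixes no simple object by \cite[Lemma 5.4]{muger2003structure}, the stabilizer $\Gamma\subseteq (\C_{ad})_{pt}$ is trivial, so the orbit of a non-invertible $X_1\in \C_{ad}$ under $(\C_{ad})_{pt}$ has size $2$ and $\C_{ad}$ has exactly two non-invertible simples of a common Frobenius-Perron dimension $d$ with $d^2=1+kd$ for some positive integer $k$; hence $\rk(\C_{ad})=4$. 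I would then rule out that $\C_{ad}$ is symmetric: by Theorem \ref{SymmCat} a symmetric $\C_{ad}$ would have all Frobenius-Perron dimensions integral, forcing $k^2+4$ to be a perfect square and hence $k=0$, contradicting $(k_i)\neq 0$. Since $\C_{ad}$ is then a non-symmetric generalized near-group fusion category with trivial universal grading (Theorem \ref{structure}), Proposition \ref{Trivial1} yields $\mathcal{Z}_2(\C_{ad})=(\C_{ad})_{pt}=\svect$, so $\C_{ad}$ is slightly degenerate of rank $4$.

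To invoke Lemma \ref{bruill2017} I would need $\C_{ad}$ to be non-split. If instead $\C_{ad}\cong \D\boxtimes \svect$ with $\D$ modular of rank $2$, then $\D$ is Fibonacci (being non-pointed), and M\"uger decomposes $\C\cong \mathrm{Fib}\boxtimes \mathrm{Fib}'$; the common Frobenius-Perron dimension of non-invertibles of the generalized near-group $\C$ then forces $\mathrm{Fib}'$ to be pointed, giving $(\mathrm{Fib}')_{ad}=\vect$ and hence $\C_{ad}=\mathrm{Fib}\boxtimes (\mathrm{Fib}')_{ad}=\mathrm{Fib}$, which contradicts $\rk(\C_{ad})=4$. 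Thus $\C_{ad}\cong \C(\mathfrak{psl}_2,q^t,8)$ by Lemma \ref{bruill2017}. For the final decomposition $\C\cong \C_{ad}\boxtimes \B$, I would use $\C_{ad}'=\C_{pt}$ together with $\C_{ad}\cap \C_{pt}=\svect$ and the dimension formula \cite[Theorem 3.14]{drinfeld2010braided} to obtain $\C=\C_{ad}\vee \C_{pt}$, then exhibit a non-degenerate pointed subcategory $\B\subseteq \C_{pt}$ complementary to $\svect$ and apply Theorem \ref{MugerThm} to $\B$, forcing $\B'=\C_{ad}$ by a Frobenius-Perron dimension count. The hard part is exactly this last step: producing the non-degenerate pointed complement to $\svect$ inside $\C_{pt}$ is not automatic for arbitrary braided pointed fusion categories containing $\svect$ in their radical (for example $\Zz_4$ equipped with a quadratic form whose radical is the unique order-two subgroup), so one must exploit the particular super-modular structure of $\C_{ad}=\C(\mathfrak{psl}_2,q^t,8)$ and the way $\C$ sits over it to rule out twisted extensions.
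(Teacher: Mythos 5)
Your overall strategy is the paper's: split by Proposition \ref{slig_degen}, handle $|G(\C)|=|\U(\C)|$ via Lemma \ref{dimuc} and Theorem \ref{MugerThm} exactly as the paper does, and in the case $|G(\C)|=2|\U(\C)|$ show $\C_{ad}$ is a rank-$4$ slightly degenerate category, identify it by Lemma \ref{bruill2017}, and decompose $\C$ by M\"uger. Several of your micro-steps differ from the paper and are fine, in one place even better: you get rank $4$ from transitivity of the $(\C_{ad})_{pt}$-action (via Proposition \ref{subcategory}) where the paper uses the parity Lemma \ref{lemma100}; you rule out the symmetric case by integrality where the paper derives a contradiction with the trivial universal grading of $\C_{ad}$ (note, though, that Theorem \ref{SymmCat} by itself only gives weak integrality for the non-Tannakian case -- you need either Deligne's theorem, or the one-line patch that $d^2\in\mathbb{Z}$ together with $d^2=1+kd$ forces $d\in\mathbb{Q}$, hence $d\in\mathbb{Z}$); and you explicitly verify the non-splitness hypothesis of Lemma \ref{bruill2017}, a point the paper passes over silently (a quicker route: if $\C_{ad}\cong\F\boxtimes\svect$ then $(\C_{ad})_{ad}=\F\neq\C_{ad}$, contradicting the triviality of $\U(\C_{ad})$ from Theorem \ref{structure}).

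The genuine gap is the step you yourself flag as "the hard part," and it is resolved not by exploiting the super-modular structure of $\C_{ad}$ but by a piece of Proposition \ref{slig_degen}(2) that you never use: in this case $\mathcal{Z}_2(\C_{ad})=\mathcal{Z}_2(\C_{ad}')$. Since $\C_{ad}'=\C_{pt}$ by \cite[Proposition 3.29]{drinfeld2010braided} and you have already shown $\mathcal{Z}_2(\C_{ad})=\svect$, this gives $\mathcal{Z}_2(\C_{pt})=\svect$, i.e.\ $\C_{pt}$ is itself a \emph{slightly degenerate} pointed braided category, and then \cite[Proposition 2.6(ii)]{etingof2011weakly} yields the splitting $\C_{pt}\cong\svect\boxtimes\B$ with $\B$ non-degenerate pointed, which is exactly what the paper cites. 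Your worried counterexample cannot occur in the slightly degenerate setting: if the fermion $\delta$ generating the radical were a square, say $\delta=2x$ in additive notation with quadratic form $q$, then $b(\delta,x)=b(x,x)^2=q(x)^4=q(2x)=q(\delta)=-1\neq 1$, contradicting $\delta$ lying in the radical; in your $\Zz_4$ example the radical generator is necessarily a boson, so that category has Tannakian M\"uger center and is not slightly degenerate, hence never arises as $\C_{pt}$ here. With $\B$ in hand, your conclusion goes through as you sketch (and as in the paper): Theorem \ref{MugerThm} gives $\C\cong\B\boxtimes\B'$, one has $\C_{ad}\subseteq\B'$ since $\B\subseteq\C_{ad}'$, and $\FPdim(\B')=\FPdim(\C)/\FPdim(\B)=\FPdim(\C_{ad})$ forces $\B'=\C_{ad}$.
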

\begin{proof}
By Proposition \ref{slig_degen}, $\FPdim(\C_{pt})=|\mathcal{U}(\C)|$ or $\FPdim(\C_{pt})=2|\mathcal{U}(\C)|$.

Suppose that $\FPdim(\C_{pt})=|\mathcal{U}(\C)|$. In this case, $\C_{ad}$ is a Fibonacci category by Lemma \ref{dimuc}. Hence $\C\cong \C_{ad}\boxtimes \C_{ad}'$ by Theorem \ref{MugerThm}, where $\C_{ad}'=\C_{pt}$ by \cite[Corollary 3.29]{drinfeld2010braided}. Hence $\C\cong \C_{ad}\boxtimes \C_{pt}$. This proves Part (1).

\medbreak
Suppose that $\FPdim(\C_{pt})=2|\mathcal{U}(\C)|$. By Theorem \ref{categorytype}, every component $\C_g$ of the universal grading of $\C$ at least has one invertible simple object. Moreover, every component $\C_g$ admits the same type. Hence every component $\C_g$ exactly contains two invertible simple objects.

By Proposition \ref{normalsubgroup}, the number of non-isomorphic non-invertible simple objects is not more than the order of $G$. Hence the number of non-isomorphic non-invertible simple objects in $\C_g$ is $1$ or $2$.

\medbreak
If the number of non-isomorphic non-invertible simple objects in $\C_g$ is $1$  then $\C_{ad}$ is a fusion category of rank $3$. By Proposition \ref{slig_degen}, the M\"{u}ger center of $\C_{ad}$ contains the category $\svect$. This contradicts Lemma \ref{lemma100} which says that the rank of $\C_{ad}$ should be even.

\medbreak
If the number of non-isomorphic non-invertible simple objects in $\C_g$ is $2$ then $\C_{ad}$ is a rank $4$ fusion category. Let $\delta$ be the non-trivial invertible simple object in $\C_{ad}$, and $Y_1,Y_2$ be the non-invertible simple objects in $\C_{ad}$. Then $\delta$ generates the category $\svect$ by Proposition \ref{slig_degen}(2). By \cite[Lemma 5.4]{muger2000galois}, $\delta\otimes Y_i$ is not isomorphic to $Y_i$ for $i=1,2$. Hence $G[Y_i]$ is trivial and $\delta\otimes Y_i\cong Y_j$ for $i\neq j$.

The fact obtained above implies that if the M\"{u}ger center $\Z_2(\C_{ad})$ of $\C_{ad}$ contains $Y_1$ or $Y_2$ then $\Z_2(\C_{ad})=\C_{ad}$ is symmetric. Since $\C_{ad}$ contains $\svect$, $\C_{ad}$ is not Tannakian. In addition, $\FPdim(\C_{ad})>2$. Hence if $\C_{ad}$ is symmetric then it should admit a $\mathbb{Z}_2$-extension of a Tannakian subcategory by \cite[Corollary 2.50]{drinfeld2010braided}. This contradicts Remark \ref{remark1} which says the universal grading group of $\C_{ad}$ is trivial. This proves that $\Z_2(\C_{ad})$ can not contain $Y_1$ or $Y_2$. Hence $\Z_2(\C_{ad})=\svect$ and $\C_{ad}$ is slightly degenerate. By Lemma \ref{bruill2017}, $\C_{ad}$ is a fusion category of the form $\C(\mathfrak{psl}_2,q^t,8)$ with $q=e^{\frac{\pi i}{8}}$ and $(t,2)=1$.

By Proposition \ref{slig_degen}(2) and the arguments above, $\mathcal{Z}_2(\C_{ad})=\mathcal{Z}_2(\C_{ad}^{'})=\svect$. On the other hand, \cite[Proposition 3.29]{drinfeld2010braided} shows that $\C_{ad}^{'}=\C_{pt}$. Hence $\C_{pt}$ is slightly degenerate and admits a decomposition $\C_{pt}\cong\svect\boxtimes\B$ by \cite[Proposition 2.6(ii)]{etingof2011weakly}, where $\B$ is a non-degenerate pointed fusion category. So $\C$ admits a decomposition $\C\cong\B\boxtimes\B'$ by Theorem \ref{MugerThm}. Counting rank and Frobenius-Perron dimensions of simple objects on both sides, we obtain that $\B'$ is a rank $4$ non-pointed fusion category. By Remark \ref{remark1}, $\C_{ad}$ is the smallest non-pointed fusion subcategory of $\C$. Hence $\C_{ad}=\B'$. This proves Part (2).
\end{proof}

Combing Theorems \ref{slightly-deg1} and \ref{slightly-deg2}, we obtain the classification of slightly degenerate generalized near-group fusion categories.

\begin{theorem}\label{mainth}
Let $\C$ be a slightly degenerate generalized near-group fusion category. Then $\C$ is exactly one of the following:

(1)\, $\C\cong \Ii_{N, \zeta} \boxtimes \B$, for some $N>2$, where $\zeta \in k^\times$ is a primitive $2^N$th root of 1,  and $\B$ is a pointed braided fusion category.

(2)\, $\C\cong \C_{ad}\boxtimes \C_{pt}$,  where $\C_{ad}$ is a Fibonacci category.

(3)\, $\C\cong\C_{ad}\boxtimes \B$,  where $\C_{ad}$ is a slightly degenerate fusion category of the form $\C(\mathfrak{psl}_2,q^t,8)$ with $q=e^{\frac{\pi i}{8}}$ and $(t,8)=1$, $\B$ is a non-degenerate pointed fusion category.
\end{theorem}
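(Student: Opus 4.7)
The plan is to observe that this theorem is essentially a direct consequence of combining the two preceding theorems (Theorem \ref{slightly-deg1} and Theorem \ref{slightly-deg2}) together with the dichotomy established in Theorem \ref{structure}. Concretely, let $\C$ be a slightly degenerate generalized near-group fusion category of type $(G,\Gamma,k_1,\dots,k_n)$. The structural dichotomy in Theorem \ref{structure} (equivalently, Remark \ref{T_Y}(1)) splits the analysis into exactly two cases: either $(k_1,\dots,k_n)=(0,\dots,0)$ or $(k_1,\dots,k_n)\neq(0,\dots,0)$.

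In the first case, $\C$ is a generalized Tambara-Yamagami fusion category, and I would invoke Theorem \ref{slightly-deg1} to conclude that $\C\cong \Ii_{N,\zeta}\boxtimes \B$ with $N>2$, $\zeta$ a primitive $2^N$th root of unity, and $\B$ a braided pointed fusion category. This yields alternative (1) of the statement.

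In the second case, where $(k_1,\dots,k_n)\neq(0,\dots,0)$, I would apply Theorem \ref{slightly-deg2} directly. That theorem produces exactly the two subcases (2) and (3) of the present theorem: either $\C_{ad}$ is a Fibonacci category and $\C\cong \C_{ad}\boxtimes \C_{pt}$, or $\C_{ad}$ is of the form $\C(\mathfrak{psl}_2,q^t,8)$ with $q=e^{\pi i/8}$, $(t,2)=1$, and $\C\cong \C_{ad}\boxtimes \B$ with $\B$ non-degenerate pointed.

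There is no real obstacle here, since the hard work has already been carried out in the two preceding theorems. The only minor point to verify is that the three alternatives are mutually exclusive and exhaustive, which is immediate: exclusivity because case (1) has $\C_{ad}$ pointed while cases (2) and (3) do not, and cases (2) and (3) are distinguished by the rank of $\C_{ad}$ (namely $2$ versus $4$); exhaustiveness follows because every generalized near-group fusion category satisfies either $(k_1,\dots,k_n)=0$ or $(k_1,\dots,k_n)\neq 0$. Thus the proof reduces to a clean application of Theorems \ref{slightly-deg1} and \ref{slightly-deg2}.
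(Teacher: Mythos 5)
Your proposal is correct and is essentially identical to the paper's own proof, which simply combines Theorems \ref{slightly-deg1} and \ref{slightly-deg2} according to whether $(k_1,\cdots,k_n)$ vanishes or not. Note also that the condition $(t,2)=1$ you carry over from Theorem \ref{slightly-deg2} is equivalent to the condition $(t,8)=1$ appearing in the statement, since both just say $t$ is odd.
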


\begin{corollary}
Let $\C$ be a slightly degenerate generalized near-group fusion category. Suppose that the universal grading group $\U(\C)$ is trivial. Then $\C\cong\C(\mathfrak{psl}_2,q^t,8)$.
\end{corollary}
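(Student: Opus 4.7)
The plan is to invoke the classification in Theorem~\ref{mainth}, which gives three possible shapes for $\C$, and to eliminate the first two using the hypothesis $\U(\C) = 1$. The main ingredient is the Deligne-product formula $(\D_1 \boxtimes \D_2)_{ad} = (\D_1)_{ad} \boxtimes (\D_2)_{ad}$, which holds because every simple object of $\D_1 \boxtimes \D_2$ has the form $X \boxtimes Y$ and $(X \boxtimes Y) \otimes (X \boxtimes Y)^* = (X \otimes X^*) \boxtimes (Y \otimes Y^*)$; this immediately yields $\U(\D_1 \boxtimes \D_2) \cong \U(\D_1) \times \U(\D_2)$. Combined with Corollary~\ref{slig_degen_triv_grad}(1), which forces $\C_{pt} \cong \svect$ and hence $|G(\C)| = 2$, the hypothesis puts strong constraints on each candidate.

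In case (1) of Theorem~\ref{mainth}, $\C \cong \Ii_{N,\zeta} \boxtimes \B$ with $N > 2$ and $\B$ braided pointed. By the construction recalled just before Theorem~\ref{DNSmain}, $\Ii_{N,\zeta}$ is a faithful $\Zz_{2^N}$-graded extension of a rank-$2$ pointed category, so $\U(\Ii_{N,\zeta})$ has order $2^N \geq 8$, and therefore $\U(\C)$ is non-trivial---contradicting $\U(\C) = 1$. (Equivalently, $|G(\Ii_{N,\zeta})| = 2^N \geq 8$ is incompatible with $|G(\C)| = 2$.)

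In case (2), $\C \cong \F \boxtimes \C_{pt}$ with $\F$ a Fibonacci category. Since $\F$ has no non-trivial invertible objects, $G(\C) \cong G(\C_{pt})$, and Corollary~\ref{slig_degen_triv_grad}(1) forces the second factor to be $\svect$. But $\U(\svect) = \Zz_2$ while $\U(\F) = 1$, so $\U(\C) \cong \Zz_2 \neq 1$, again a contradiction.

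Case (3) must therefore hold: $\C \cong \C_{ad} \boxtimes \B$ with $\C_{ad} \cong \C(\mathfrak{psl}_2, q^t, 8)$ and $\B$ a non-degenerate pointed fusion category. By Proposition~\ref{categorytype}(1), the adjoint subcategory of $\C$ has trivial universal grading, while $\U(\B) \cong G(\B)$ since $\B$ is pointed. Hence $\U(\C) \cong G(\B)$, and $\U(\C) = 1$ forces $\B \cong \vect$, giving $\C \cong \C(\mathfrak{psl}_2, q^t, 8)$. The only step that is not entirely routine is the product formula for adjoint subcategories and universal gradings; it is folklore and follows directly from the description of simple objects in a Deligne product, but it is the one place one must be careful.
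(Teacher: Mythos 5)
Your proof is correct, and while it follows the same skeleton as the paper (case elimination in Theorem \ref{mainth}), the mechanism is genuinely different. The paper's proof observes that triviality of $\U(\C)$ rules out case (1) and collapses the Deligne products in cases (2) and (3) to $\C=\C_{ad}$, leaving $\C\cong\F$ or $\C\cong\C(\mathfrak{psl}_2,q^t,8)$; it then kills the Fibonacci possibility by Proposition \ref{Trivial1}, which for a non-symmetric braided generalized near-group category with trivial grading gives $\mathcal{Z}_2(\C)=\C_{pt}$, so slight degeneracy forces $\C_{pt}=\svect$, incompatible with $\F_{pt}=\vect$ (equivalently, $\F$ is non-degenerate rather than slightly degenerate). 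You instead compute $\U(\C)$ directly in each case via the product formula $\U(\D_1\boxtimes\D_2)\cong\U(\D_1)\times\U(\D_2)$, which you correctly derive from $(\D_1\boxtimes\D_2)_{ad}=(\D_1)_{ad}\boxtimes(\D_2)_{ad}$, together with Corollary \ref{slig_degen_triv_grad}(1). Both routes are sound; yours is more uniform, since one mechanism handles all three cases and also forces $\B\cong\vect$ in case (3), at the cost of verifying the folklore product formula, which the paper's argument avoids entirely. Three minor points: the faithful $\Zz_{2^N}$-grading of $\Ii_{N,\zeta}$ a priori only shows that $\U(\Ii_{N,\zeta})$ surjects onto $\Zz_{2^N}$, hence has order at least $2^N$ (which is all you need, and your alternative count $|G(\Ii_{N,\zeta}\boxtimes\B)|\geq 2^N>2$ is airtight anyway); triviality of $\U(\C(\mathfrak{psl}_2,q^t,8))$ is better cited from the $PSU(2)_6$ example and Theorem \ref{structure}(2) than from Proposition \ref{categorytype}(1), which only gives the subgroup correspondence; and in case (3) you could bypass the product formula altogether, since $\U(\C)=1$ gives $\C=\C_{ad}$, whence $\FPdim(\B)=1$ by comparing Frobenius--Perron dimensions.
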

\begin{proof}
Since $\U(\C)$ is trivial, $\C$ can not be equivalent to $\Ii_{N, \zeta} \boxtimes \B$.  Then  $\C\cong \F$ or $\C(\mathfrak{psl}_2,q^t,8)$ by Theorem \ref{mainth}, where $\F$ is a Fibonacci category. But Proposition \ref{Trivial1} shows that $\mathcal{Z}_2(\C)=\C_{pt}=\svect$. Hence $\C\cong\C(\mathfrak{psl}_2,q^t,8)$.
\end{proof}

\section{Braided generalized Tambara-Yamagami fusion categories}\label{sec6}
The following lemma is direct.
\begin{lemma}\label{multiplicity}
Let $X$ be an object in a fusion category $\C$. If $X\otimes X^*$ contains $2^{2i-1}$ copies of trivial simple object $\1$ then $X$ at least contains $2^i$ simple objects (duplicate objects are numbered by multiplicity).
\end{lemma}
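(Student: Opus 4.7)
Write $X = \bigoplus_{j=1}^n m_j Y_j$ with pairwise non-isomorphic simples $Y_j$ and positive integer multiplicities $m_j$. Since $\dim\Hom(\1,X\otimes X^*) = \dim\End(X) = \sum_j m_j^2$, the hypothesis reads $\sum_j m_j^2 = 2^{2i-1}$, while the number of simple constituents of $X$ counted with multiplicity is exactly $\sum_j m_j$. The lemma therefore reduces to the purely arithmetic statement: whenever positive integers $m_j$ satisfy $\sum m_j^2 = 2^{2i-1}$, one has $\sum m_j \geq 2^i$.

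My plan is induction on $i$. The base case $i=1$ is immediate: $\sum m_j^2 = 2$ forces $(m_1,m_2)=(1,1)$, so $\sum m_j = 2 = 2^1$. For the inductive step, let $a := \min_j v_2(m_j)$. In the easy case $a \geq 1$, all multiplicities are even; writing $m_j = 2m_j'$ gives $\sum (m_j')^2 = 2^{2(i-1)-1}$, and the induction hypothesis yields $\sum m_j = 2\sum m_j' \geq 2\cdot 2^{i-1} = 2^i$.

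The substantive case is $a=0$, i.e.\ some $m_j$ is odd. Here the crucial 2-adic input is that odd squares satisfy $m^2\equiv 1\pmod 8$, combined with the fact that $v_2(2^{2i-1}) = 2i-1$ is odd. These constraints propagate layer-by-layer: a mod-$2^{2a+2}$ analysis of $\sum m_j^2$ forces the stratum sizes $k_a := |\{j : v_2(m_j)=a\}|$ to satisfy specific mod-$4$ congruences for every $a \leq i-2$, and a mod-$2$ congruence at $a = i-1$. The strategy is to conclude from this that at least two of the $m_j$ must sit at the top admissible valuation $i-1$; each such is then $\geq 2^{i-1}$, so $\sum m_j \geq 2\cdot 2^{i-1} = 2^i$.

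The main obstacle is making this layered 2-adic argument fully rigorous, especially in the delicate sub-case where large odd $m_j$'s (at level $0$) coexist with $m_j$'s of intermediate valuation. I expect to handle it by a peeling procedure: pair off $m_j$'s of equal valuation in accordance with the mod-$4$ congruences, reduce to a residual system at the next layer whose sum of squares is still an odd power of $2$, and invoke the induction hypothesis on this residual system. Verifying that this peeling process always terminates at a non-empty top layer of cardinality at least two, and that the book-keeping of partial sums remains compatible with the target inequality $\sum m_j \geq 2^i$, is the central technical point of the argument.
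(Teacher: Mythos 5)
Your reduction of the lemma to arithmetic is correct: writing $X=\bigoplus_j m_jY_j$ with distinct simples $Y_j$, the multiplicity of $\1$ in $X\otimes X^*$ is $\dim\End(X)=\sum_j m_j^2$, so the lemma is equivalent to the claim that $\sum_j m_j^2=2^{2i-1}$ forces $\sum_j m_j\ge 2^i$. But that arithmetic claim --- whose main case you explicitly leave open as ``the central technical point'' --- is \emph{false}, so no version of the 2-adic peeling can be completed. Take $i=5$ and $(m_1,\dots,m_5)=(22,5,1,1,1)$: then $\sum_j m_j^2=484+25+1+1+1=512=2^9$, while $\sum_j m_j=30<32=2^5$. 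This breaks exactly the step you flag: your endgame was to force at least two parts of $2$-adic valuation $i-1$, each of size at least $2^{i-1}$, but here the valuations occurring are only $0$ and $1$, and no part is $\ge 16$. Note also that all your congruence constraints are satisfied by this configuration (the four odd parts contribute $4\bmod 8$, and $22^2\equiv 4\bmod 8$, so the total is $0\bmod 8$; similarly modulo $16$), which shows that mod-$2^k$ stratification genuinely cannot pin down the large-part structure: a single large odd part can absorb most of $2^{2i-1}$ while keeping the linear sum small. The claim does hold for $i\le 4$ (a finite check: e.g.\ for $i=4$, no partition with sum $12$ or $14$ has square-sum $128$), which is why the pattern, and the extremal-looking configuration $(2^{i-1},2^{i-1})$, seem plausible; but the extremal intuition fails from $i=5$ on.

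For comparison with the source: the paper offers no argument at all --- the lemma is introduced with the phrase ``The following lemma is direct'' --- so there is no proof to measure yours against, and in fact the counterexample above shows the lemma itself is false as stated. Realize it inside $\vect_{\Zz_5}$ with simple objects $g_0=\1,g_1,\dots,g_4$ by taking $X=22\,g_0\oplus 5\,g_1\oplus g_2\oplus g_3\oplus g_4$: then $X\otimes X^*$ contains exactly $512=2^{2\cdot 5-1}$ copies of $\1$, yet $X$ has only $30<2^5$ simple constituents counted with multiplicity. What is true, and what the application in Theorem \ref{BGTY} actually needs, is the statement for $F(X)$ with $X$ simple, where $F\colon\C\to\C_G$ is the de-equivariantization functor: there the simple constituents of $F(X)$ form a single orbit under the $G$-action on $\C_G$ and hence occur with a common multiplicity $m$, so $m^2t=2^{2i-1}$ with $t$ the orbit size; writing $m=2^a$ gives $t=2^{2i-1-2a}$, and $t\ge 1$ forces $a\le i-1$, whence the number of constituents is $mt=2^{2i-1-a}\ge 2^i$. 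So rather than trying to repair the peeling argument, the correct move is to add the equal-multiplicity hypothesis (or restrict to images of simple objects under $F$), under which the lemma reduces to the one-line valuation count just given.
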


Recall from \cite[Section 5]{natale2013faithful} that if $\C$ is generalized Tambara-Yamagami fusion category then $\FPdim(X)=\FPdim(Y)$ for all non-invertible simple objects $X$ and $Y$. In particular, $\FPdim(X)=\sqrt{|\Gamma|}$, where $\Gamma$ is the stalizer of $X$ under the action of $G(\C)$. Thus the adjoint subcategory $\C_{ad}$ coincides with the fusion subcategory generated by $\Gamma$, and we have  $cd(\C)=\{1,\sqrt{|\Gamma|}\}$, where $cd(\C)$ is the set of Frobenius-Perron dimensions of simple objects in $\C$. The dimension of a generalized Tambara-Yamagami fusion category is always even, more precisely it is equal to $2|G(\C)|$. In fact, we have the following characterization .

\begin{lemma}\label{grading-GTY}
Let $\C$ be a fusion category. Then $\C$ is a generalized Tambara-Yamagami fusion category if and only if $|cd(\C)|=2$ and $\C=\C_0\oplus \C_1$ has a $\mathbb{Z}_2$-grading, where $\C_0$ contains all invertible simple objects, $\C_1$ contains all non-invertible simple objects.
\end{lemma}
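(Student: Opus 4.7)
The plan is to handle the two implications separately, with the forward direction being essentially a matter of unpacking the fusion rules already established in Section \ref{sec3}, and the reverse direction requiring a short argument to recover transitivity of the $G(\C)$-action from the grading hypothesis.

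For the ``only if'' direction, assume $\C$ is a generalized Tambara-Yamagami fusion category of type $(G,\Gamma,0,\dots,0)$. From the discussion immediately preceding the lemma, every non-invertible simple object has Frobenius-Perron dimension $\sqrt{|\Gamma|}$, so $cd(\C)=\{1,\sqrt{|\Gamma|}\}$ has exactly two elements. I would then let $\C_0:=\C_{pt}$ and let $\C_1$ be the full abelian subcategory generated by the non-invertible simple objects. To verify that $\C=\C_0\oplus\C_1$ is a faithful $\mathbb{Z}_2$-grading, I need to check the three inclusions: $\C_0\otimes\C_0\subseteq\C_0$ is obvious; $\C_0\otimes\C_1\subseteq\C_1$ follows from Lemma \ref{action_of_G(C)} (tensoring a simple object by an invertible is simple) combined with the fact that Frobenius-Perron dimension is multiplicative, so $g\otimes X$ remains non-invertible; and $\C_1\otimes\C_1\subseteq\C_0$ is exactly the content of Lemma \ref{fusionrules3}(2) under the hypothesis $(k_1,\dots,k_n)=(0,\dots,0)$, since the decomposition of $X_i\otimes X_j$ consists only of invertible summands. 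Stability of $\C_1$ under duality is automatic since dualization preserves Frobenius-Perron dimension.

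For the ``if'' direction, suppose $|cd(\C)|=2$ with $cd(\C)=\{1,d\}$ and $\C=\C_0\oplus\C_1$ is a $\mathbb{Z}_2$-grading in which $\C_0$ contains all invertibles and $\C_1$ contains all non-invertibles. I need to show $\C$ is generalized near-group with vanishing $(k_1,\dots,k_n)$. The key step is transitivity of the $G(\C)$-action on the set of non-invertible simple objects. For any two non-invertible simples $X,Y$, both lie in $\C_1$, and because duality sends $\C_1$ into itself (again by the Frobenius-Perron argument), the object $X\otimes Y^*$ belongs to $\C_1\otimes\C_1\subseteq\C_0$, hence is a direct sum of invertibles. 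Pick any invertible summand $g$ of $X\otimes Y^*$; by Frobenius reciprocity $X$ is a summand of $g\otimes Y$, and $g\otimes Y$ is simple by Lemma \ref{action_of_G(C)}, so $X\cong g\otimes Y$. This establishes transitivity, so $\C$ is a generalized near-group fusion category of some type $(G,\Gamma,k_1,\dots,k_n)$.

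It remains to observe that $(k_1,\dots,k_n)=(0,\dots,0)$: for any non-invertible simple $X\in\C_1$, the object $X\otimes X^*$ again lies in $\C_1\otimes\C_1\subseteq\C_0$, so it contains no non-invertible simple summand. Comparing with the decomposition in equation \eqref{decom2}, all the coefficients $k_i$ must vanish, and $\C$ is a generalized Tambara-Yamagami fusion category by Remark \ref{T_Y}(1). The only subtle point in the whole argument is the transitivity step in the reverse direction, which I expect to be the main (mild) obstacle; once one sees that the grading forces $X\otimes Y^*$ to be pointed, the rest is formal.
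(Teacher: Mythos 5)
The paper states Lemma \ref{grading-GTY} without any proof (it is offered as a direct characterization following the recollections from \cite[Section 5]{natale2013faithful}), so there is no line-by-line comparison to make; your proposal is a correct and complete filling-in of the omitted argument, and it stays entirely within the paper's own toolkit. Your forward direction is the expected unpacking: with $(k_1,\dots,k_n)=(0,\dots,0)$, Lemma \ref{fusionrules3}(2) gives $\C_1\otimes\C_1\subseteq\C_0=\C_{pt}$, and the other inclusions follow as you say --- though you should also record the fourth inclusion $\C_1\otimes\C_0\subseteq\C_1$, which you did not list; it follows by the same reasoning after dualizing, since $X\otimes g\cong (g^*\otimes X^*)^*$ is simple of the same Frobenius--Perron dimension. (That $cd(\C)$ has exactly two elements is automatic: a non-invertible simple has Frobenius--Perron dimension $>1$, so $\sqrt{|\Gamma|}\neq 1$.) The reverse direction is where the real content lies, and your transitivity argument is sound: $(\C_1)^*=\C_1$ by the grading axiom, so $X\otimes Y^*\in\C_1\otimes\C_1\subseteq\C_0$ is a sum of invertibles, and extracting an invertible summand $g$ with $\dim\Hom(g\otimes Y,X)=\dim\Hom(g,X\otimes Y^*)\geq 1$ forces $X\cong g\otimes Y$ by Lemma \ref{action_of_G(C)}. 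This is precisely the mechanism the paper itself uses in the proof of Proposition \ref{subcategory}, and your closing step --- $X\otimes X^*\in\C_0$ kills all $k_i$ in \eqref{decom2}, whence Remark \ref{T_Y}(1) applies --- correctly completes the equivalence. One stylistic observation: in your reverse direction the hypothesis $|cd(\C)|=2$ is used only to guarantee that $\C$ is not pointed (so $\C_1\neq 0$ and the transitivity claim is non-vacuous); once transitivity holds, the constancy of the dimension of the non-invertible simples, hence $|cd(\C)|=2$, is automatic, which explains why the paper regarded the lemma as immediate.
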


Let $\C$ be a braided generalized Tambara-Yamagami fusion category. Then $\C$ is nilpotent. By \cite[Theorem 1.1]{drinfeld2007g}, there exist prime numbers $2=p_1<p_2<\cdots<p_r$ such that $\C=\C_{p_1}\boxtimes\C_{p_2}\boxtimes\cdots\boxtimes\C_{p_r}$, where  $\C_{p_i}$ is a braided fusion category of dimension $p_i^{n_i}$ for some $n_i>0$. Since $p_2,\cdots,p_r$ are odd primes,  $\C_{p_2},\cdots,\C_{p_r}$ are pointed fusion categories. Hence, $\C_2$ is the unique generalized Tambara-Yamagami fusion subcategory of $\C$. It follows that the classification of braided generalized Tambara-Yamagami fusion categories $\C$ is reduced to the case when $\FPdim(\C)$ is a power of $2$. 

Let $\C$ be braided generalized Tambara-Yamagami fusion categories of dimension $2^n$. By \cite[Theorem 2.11]{etingof2011weakly}, we may assume $\FPdim(X)=2^i$ if $\C$ is integral or $\FPdim(X)= 2^{j}\sqrt{2}$ if $\C$ is not integral, for every non-invertible simple object $X$ in $\C$, where $i\geq 1$, $j\geq 0$.

\begin{theorem}\label{BGTY}
Let $\C$ be a braided generalized Tambara-Yamagami fusion category of dimension $2^n$. Then

(1)\, Suppose that $\C$ is integral and  $cd(\C)=\{1,2^i\}$. Then $\C$ is a $G$-equivariantization of a pointed fusion category, where $G$ is an Abelian group of order $2^{2i-1}$. In particular, $\C$ is group-theoretical.

(2)\, Suppose that $\C$ is not integral and  $cd(\C)=\{1,2^i\sqrt{2}\}$. Then $C$ is a $G$-equivariantization of some $\Ii_{N, \zeta} \boxtimes \B$, where $G$ is an Abelian group of order $2^{2i-1}$,  and $\B$ is a braided pointed fusion category.
\end{theorem}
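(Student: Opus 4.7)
The plan is to construct, in each case, a Tannakian subcategory $\E = \Rep(G) \subseteq \C$ with $G$ abelian of order $2^{2i-1}$ such that the de-equivariantization $\C_G$ has the desired form, and then invoke $\C \cong (\C_G)^G$. To set the stage: since $\C$ is a generalized Tambara-Yamagami category, every non-invertible simple $X$ satisfies $X \otimes X^* = \bigoplus_{h \in \Gamma} h$ for a fixed stabilizer $\Gamma \leq G(\C)$ of order $\FPdim(X)^2$; thus $|\Gamma| = 2^{2i}$ in case (1) and $|\Gamma| = 2^{2i+1}$ in case (2). The adjoint $\C_{ad}$ is the pointed braided fusion subcategory generated by $\Gamma$, and by Lemma \ref{centralizer_C_ad} it is symmetric, so its defining quadratic form is a character $q:\Gamma \to k^\times$; the larger pointed subcategory $\C_{pt}$ carries a pre-metric group structure $(G(\C),Q)$ extending $q$.

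Next, I would exhibit a subgroup $K \leq G(\C)$ with $|K| = 2^{2i-1}$, $|K \cap \Gamma| = 2^i$, and $Q|_K \equiv 1$, so that $\E := \Rep(\hat K)$ embeds as a Tannakian subcategory of $\C$. For any non-invertible simple $X \in \C$ a direct computation gives
\[
\End_{\C_G}(F(X)) \cong \bigoplus_{\ell \in K}\Hom_\C(X, \ell \otimes X) \cong k^{|K \cap \Gamma|} = k^{2^i},
\]
which is commutative because the Tannakian braiding makes the associated $2$-cocycle on $K$ trivial. Hence $F(X)$ splits as a multiplicity-free sum of $2^i$ distinct simples, each of Frobenius-Perron dimension $\FPdim(X)/2^i$---namely $1$ in case (1) and $\sqrt{2}$ in case (2). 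A parallel calculation shows that every invertible $g \in G(\C)$ gives a single invertible object of $\C_G$, with $F(g) \cong F(g')$ iff $g^{-1}g' \in K$; a dimension count then verifies that these objects exhaust $\Irr(\C_G)$.

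In case (1) this makes $\C_G$ pointed, so $\C \cong (\C_G)^G$ exhibits $\C$ as the $G$-equivariantization of a pointed fusion category and is in particular group-theoretical. In case (2), the $\mathbb{Z}_2$-grading $\C = \C_0 \oplus \C_1$ descends to a grading of $\C_G$ whose trivial component contains all invertibles and whose non-trivial component consists of simples of Frobenius-Perron dimension $\sqrt{2}$; by Lemma \ref{grading-GTY}, $\C_G$ is itself a braided generalized Tambara-Yamagami category, and its adjoint subcategory is pointed of rank $2$. Theorem \ref{DNSmain} then yields $\C_G \cong \Ii_{N,\zeta} \boxtimes \B$ of the required form.

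The hard part will be producing the subgroup $K$ with all three prescribed properties. The character $q = Q|_\Gamma$ has kernel of index at most $2$ in $\Gamma$, which provides a starting isotropic subgroup from which one selects an order-$2^i$ piece as $K \cap \Gamma$; one must then extend by $2^{i-1}$ coset representatives drawn from $G(\C) \setminus \Gamma$ along which $Q$ remains trivial. The existence of this extension of the precise order $2^{2i-1}$ is where the $2$-group structure of $G(\C)$ and the interplay between $\mathcal{Z}_2(\C)$ and the pre-metric structure $(G(\C), Q)$ coming from the Tambara-Yamagami fusion rules become essential.
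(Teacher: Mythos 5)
Your overall blueprint (find a Tannakian subcategory $\E=\Rep(G)$ with $|G|=2^{2i-1}$, de-equivariantize, identify $\C_G$, conclude via $\C\cong(\C_G)^G$) coincides with the paper's, but your specific route through a multiplicity-free splitting of $F(X)$ has a genuine gap: the subgroup $K$ with $|K|=2^{2i-1}$, $|K\cap\Gamma|=2^i$ and $Q|_K\equiv 1$ is never constructed, and your final paragraph concedes exactly this. For $i\geq 2$ we have $2^i<2^{2i-1}$, so such a $K$ necessarily contains isotropic elements outside $\Gamma$, and nothing in the braided generalized Tambara--Yamagami axioms provides them: if $Q$ is anisotropic off $\Gamma$, every subgroup with $Q|_K\equiv 1$ lies inside $\Gamma$, forcing $|K|=|K\cap\Gamma|$, which is incompatible with your requirements. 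The paper sidesteps this problem entirely by choosing $\E$ \emph{inside} $\C_{ad}$ (case (1)), resp.\ inside $\C_{ad}\cap\C'$ (case (2), after first showing $\FPdim(\C_{ad}\cap\C')\geq 2^{2i}$ via \cite[Theorem 3.14]{drinfeld2010braided}), where Lemma \ref{centralizer_C_ad} and Theorem \ref{SymmCat} guarantee a Tannakian subcategory of dimension exactly $2^{2i-1}$. With that choice no multiplicity-freeness is needed: $F(X)\otimes F(X)^*$ contains $\FPdim(\E)=2^{2i-1}$ copies of $\1$, so Lemma \ref{multiplicity} gives at least $2^i$ simple summands in $F(X)$, and $\FPdim(F(X))=\FPdim(X)$ then forces exactly $2^i$ summands of dimension $1$ (resp.\ $\sqrt2$), with \cite[Lemma 4.6(iii)]{drinfeld2010braided} identifying $\Irr(\C_G)$.

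Two further steps would fail even if your $K$ existed. First, the claim that $\End_{\C_G}(F(X))$ is commutative ``because the Tannakian braiding makes the associated $2$-cocycle on $K$ trivial'' is not a valid general principle: for the paper's choice $\E\subseteq\C_{ad}$ one has $\dim\End_{\C_G}(F(X))=2^{2i-1}$ while $F(X)$ has only $2^i$ simple summands counted with multiplicity, so for $i\geq 2$ some multiplicity exceeds $1$ and this algebra is noncommutative even though $\E$ is Tannakian. Hence triviality of the cocycle on the stabilizer $K\cap\Gamma$ would have to be extracted from your particular $K$, which you do not do; without it, your own dimension count permits $F(X)$ to split with multiplicities, and then $\C_G$ need not be pointed in case (1). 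Second, in case (2) you invoke Theorem \ref{DNSmain}, which applies only to \emph{braided} categories: $\C_G$ inherits a braiding only when $\E\subseteq\mathcal{Z}_2(\C)$, a transparency condition strictly stronger than $Q|_K\equiv 1$ and not imposed on your $K$. The paper secures it by locating $\E$ in $\C_{ad}\cap\C'$ and citing \cite[Remark 2.3]{etingof2011weakly}. So while your proposal is structurally parallel to the paper's proof, it is missing its three load-bearing ingredients: the existence of the Tannakian subcategory in the right place, the control of multiplicities (handled in the paper by Lemma \ref{multiplicity} rather than by cocycle triviality), and the centrality needed for $\C_G$ to be braided.
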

\begin{proof}
Suppose that $\E=\Rep(G)\subset \C$ is  a Tannakian subcategory. Let  $\C_G$ be the de-equivariantization of $\C$ by $\E$ and $F:\C\to \C_G$ be the corresponding forgetful functor. Then $\E$ is the kernel of $F$; that is, $F(g)$ is some copies of the trivial object $\1$ for every simple object $g\in\E$.

Let $X$ be a non-invertible simple object in $\C$. Then 
$$X\otimes X^*=\sum_{g\in\E}g\oplus \sum_{h\in \Gamma-\E}h.$$
Applying the forgetful functor $F$, we have 

\begin{equation}\label{eqBGTY}
\begin{split}
F(X)\otimes F(X)^*=\underbrace{\1\oplus\cdots\oplus\1}_{\FPdim(\E)}\oplus \sum_{h\in \Gamma-\E}F(h).
\end{split}
\end{equation}

\medbreak
(1)\, The adjoint subcategory $\C_{ad}$ with dimension $2^{2i}$ is symmetric by Lemma \ref{centralizer_C_ad}, since it is pointed. By Theorem \ref{SymmCat}, we get a Tannakian subcategory $\E=\Rep(G)$ of dimension $\FPdim(\C_{ad})/2=2^{2i-1}$.

The Equation \ref{eqBGTY} shows that $F(X)$  is not simple and has at least $2^{i}$ simple objects by Lemma \ref{multiplicity}. The fact that $\FPdim(F(X))=\FPdim(X)= 2^{i}$ hence shows that $F(X)$ is a direct sum of $2^{i}$ invertible simple objects. So we get that $F(Z)$ is a direct sum of  invertible simple objects for any simple object $Z$ in $\C$. 

On the other hand, \cite[Lemma 4.6(iii)]{drinfeld2010braided} shows that every simple object in $\C_G$ is a direct summand of $F(Z)$ for some $Z\in \C$. We thus show that every simple object in $\C_{G}$ is invertible. That is, $\C_{G}$ is pointed. Hence $\C$ is a $G$-equivariantization of a pointed fusion category $\C_{G}$. Finally, $\C$ is group-theoretical by \cite[Theorem 7.2]{naidu2009fusion}.

\medbreak
(2)\, Assume $\C$ is of type $(1,m2^{2i+1};2^i\sqrt{2},m)$, see Lemma \ref{grading-GTY}. By \cite[Theorem 3.14]{drinfeld2010braided}, we have $\FPdim(\C_{ad})\FPdim((\C_{ad})')=\FPdim(\C)\FPdim(\C_{ad}\cap\C^{'})$.  Then $\FPdim((\C_{ad})')=2m\FPdim(\C_{ad}\cap\C^{'})$. By the proof of Lemma \ref{centralizer_C_ad}, $(\C_{ad})'$ contains $\C_{pt}$ as a fusion subcategory. Hence $\FPdim(\C_{ad}\cap\C^{'})\geq 2^{2i}$.

By Theorem \ref{SymmCat}, we get a Tannakian subcategory $\E=\Rep(G)$ of dimension $\FPdim(\C_{ad}\cap\C^{'})/2=2^{2i-1}$. Then Equation \ref{eqBGTY} shows that $F(X)$  is not simple and has at least $2^{i}$ simple objects. On the other hand, $\FPdim(F(X))=\FPdim(X)= 2^{i}\sqrt{2}$ which shows that $F(X)$ is a direct sum of $2^{i}$ simple objects with dimension $\sqrt{2}$. Again by \cite[Lemma 4.6(iii)]{drinfeld2010braided}, we know that the simple objects in $\C_G$ have dimension $1$ and $\sqrt{2}$. Hence $\C_G$ is an extension of a pointed fusion category of rank $2$, by \cite[Corollary 3.2]{DNS2019} or \cite[Corollary 3.3]{Dongpointed2020}. Finally, $\C_G$ is braided by \cite[Remark 2.3]{etingof2011weakly} since $\E$ is contained in $\C'$. It follows from Theorem \ref{DNSmain} that $\C_G\cong \Ii_{N, \zeta} \boxtimes \B$, for some $N\geq 1$, where $\zeta \in k^\times$ is a primitive $2^N$th root of 1,  and $\B$ is a pointed braided fusion category. 
\end{proof}

\begin{remark}
The result in Theorem \ref{BGTY} (1) can also be obtained by \cite[Theorem 6.10]{drinfeld2007g} and \cite[Theorem 7.2]{naidu2009fusion}. However, our result presents the explicit information on the group $G$.
\end{remark}

\section*{Acknowledgements}
We would like to thank Sonia Natale for very useful discussions and inspiration when she visited Nanjing University of Information Science and Technology. J. Dong is partially supported by the startup foundation for introducing talent of NUIST (Grant No. 2018R039) and the Natural Science Foundation of China (Grant No. 11201231).




\end{document}